\pgfplotsset{compat=1.11}
\newcolumntype{e}{>{\displaystyle}r @{\,} >{\displaystyle}c @{\,} >{\displaystyle}l}
\theoremstyle{plain}
\newtheorem{Theorem}{Theorem}[section]
\newtheorem{Lemma}[Theorem]{Lemma}
\newtheorem{Proposition}[Theorem]{Proposition}
\theoremstyle{definition}
\newtheorem{Remark}[Theorem]{Remark}
\newcommand{\E}{\mathbb E}
\newcommand{\R}{\mathbb R}
\newcommand{\N}{\mathcal N}
\renewcommand{\S}{\mathcal S}
\newcommand{\G}{\mathcal G}
\newcommand{\dev}{\mathbf{dev}}
\newcommand{\F}{\mathcal F}
\newcommand{\1}{\mathds 1}
\renewcommand{\P}{\mathbb P}
\numberwithin{equation}{section}
\begin{document}

\title{The Radial Spanning Tree is straight in  all dimensions}
\author{Tom Garcia-Sanchez\footnote{\href{mailto:tom.garcia-sanchez@imt-nord-europe.fr}{tom.garcia-sanchez@imt-nord-europe.fr}; IMT Nord Europe, France}}
\maketitle

\begin{abstract}
    The \emph{Radial Spanning Tree} (RST) in dimension $d\geq2$ is a \emph{random geometric graph} constructed on a \emph{homogeneous Poisson point process} $\N$ in $\R^d$ augmented by the origin, with edges connecting each $x\in\N$ to the nearest point $y\in\N\cup\{0\}$ that lies closer to $0$ than $x$, with respect to the \emph{Euclidean distance}. By construction, it forms almost surely a \emph{tree} rooted at $0$. The RST was introduced in 2007 by Baccelli and Bordenave, who investigated \emph{straightness}, a deterministic property introduced by Howard and Newman in 2001, to derive information about the \emph{asymptotic directions} of \emph{infinite branches}. They proved that the RST is almost surely \emph{straight} in dimension $2$, which directly implies that all infinite branches are asymptotically directed, every possibility is attained, and directions reached by multiple infinite branches form a dense subset. However, their approach relies crucially on \emph{planarity}, preventing any straightforward extension to higher dimensions. In this paper, we close this gap by proving that the RST is almost surely \emph{straight} in any dimension, thereby obtaining the same consequences for the behavior of infinite branches. Our approach resolves the key barriers in the study of the RST, notably those posed by the complex dependency structure combined with the radial nature of the model, and especially beyond the planar setting. It relies on tools developed for the analysis of the \emph{Directed Spanning Forest}, a closely related model, including recent progress by the author in 2025. Specifically, a key contribution of this work is the construction of a suitable renewal-type decomposition of RST paths. Leveraging this decomposition together with classical concentration inequalities, we show that RST paths cannot deviate far from straight lines and derive straightness.
\end{abstract}

\paragraph{Acknowledgments.}
The author warmly thanks his PhD advisors, David Coupier and Viet Chi Tran, for their continuous guidance and support throughout this work. This research was partly supported by the ANR project \emph{GrHyDy} (ANR-20-CE40-0002), the CEFIPRA project \emph{Directed random networks and their scaling limits} (No.\ 6901), the CNRS RT \emph{MAIAGES} (Action 2179), and by a doctoral fellowship from ENS Paris.

\section{Introduction}

In this paper, we study the \emph{Radial Spanning Tree} (RST) in $\R^d$, for arbitrary fixed dimension $d\geq 2$. The construction proceeds as follows. Consider an \emph{homogeneous Poisson point process} $\N$ of unit intensity in $\R^d$. Let $\|\cdot\|$ denote the standard Euclidean norm of $\R^d$. If $x\in\R^d\setminus\{0\}$ and $X\subset\R^d$ are such that the infimum
    \[\inf_{\substack{y\in X\cup\{0\}\\\|y\|<\|x\|}}\|y-x\|\]
is attained at a unique point, we denote such a minimizer by $\Psi(x, X)$. We set $\Psi(0,X)\coloneqq 0$ by convention. Since with probability one $\N$ is locally finite and contains no nontrivial isosceles triangles, $\Psi(x)\coloneqq\Psi(x,\N)$ is almost surely well defined for every $x\in\N$. The RST is then defined as the \emph{random geometric graph} whose \emph{vertex set} is $\N\cup\{0\}$ and whose \emph{directed edges} are given by
    \[\{(x,\Psi(x)):x\in\N\}.\]
In words, each vertex $x\in\N$ has a unique outgoing edge pointing to the nearest point in $\N\cup\{0\}$ that lies closer to the origin. As $\N$ is locally finite with probability one, the resulting graph is almost surely a \emph{tree} rooted at $0$. Some simulations are presented in Figure~\ref{fig_rst}.

\begin{figure}[h]
    \label{fig_rst}
    \centering
    \includegraphics[height=0.33\linewidth]{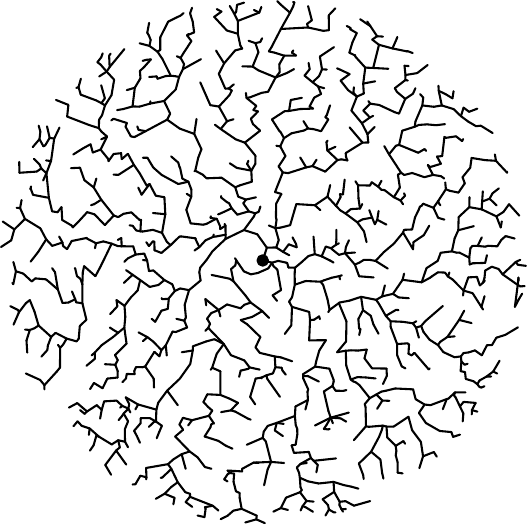}\qquad
    \includegraphics[height=0.33\linewidth]{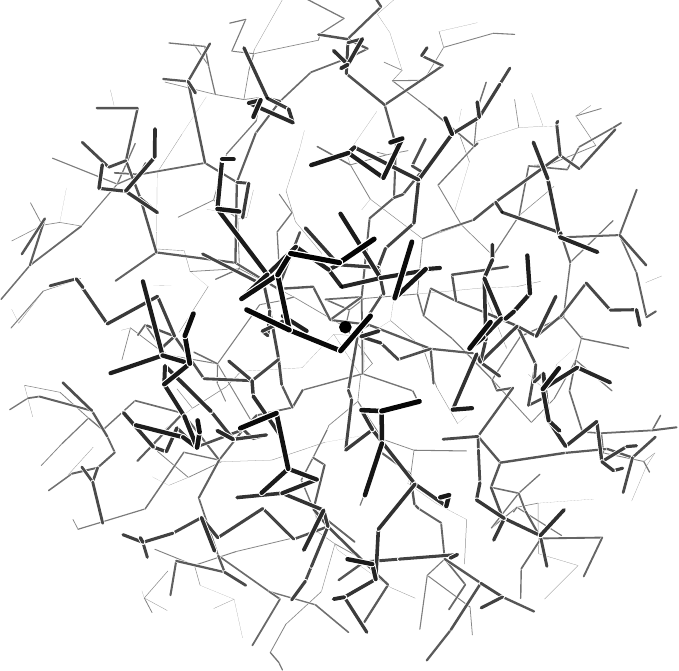}
    \caption{Simulations of the RST within a finite ball around the origin, depicted as a dot. The picture on the left corresponds to $d=2$, while the picture on the right corresponds to $d=3$, with line color and thickness chosen to help visualizing the depth.}
    \label{fig:placeholder}
\end{figure}

Let us now outline the fundamental questions that arise in the study of infinite geometric trees. If $T$ is a fixed \emph{deterministic} tree in $\R^d$ rooted at $0$, an \emph{infinite branch} is defined as a \emph{backward path} $(u_n)_{n\geq0}$ starting at the root, that is a sequence of vertices such that $(u_{n+1}, u_n)$ is an edge of $T$ for all $n\geq0$, and it is said to be \emph{asymptotically directed} toward $\xi\in\mathbb S^{d-1}\coloneqq\{\zeta\in\R^d:\|\zeta\|=1\}$ if
    \[\lim_{n\to\infty}\frac{u_n}{\|u_n\|}=\xi.\]
This leads to several natural structural problems: \emph{How many infinite branches are there? Do they admit asymptotic directions? Which limits are possible? How many distinct infinite branches can share the same one?} A crucial tool to address these questions is the notion of \emph{straightness}, a property introduced by Howard and Newman \cite{howard2001geodesics} in 2001, which we now define precisely. For any vertex $u$ of $T$, denote by $V_u$ the set of vertices in the sub-tree rooted at $u$. For any $\xi\in\mathbb S^{d-1}$ and $\theta\in\R$, we define the \emph{cone} pointing in direction $\xi$ with \emph{aperture angle} $\theta$ as
    \[C[\xi, \theta]\coloneqq\{x\in\R^d:x\cdot\xi\geq\|x\|\cos\theta\},\]
where $\cdot$ denotes the standard scalar product of $\R^d$. The tree $T$ is said to be \emph{straight} if there exists a function $f:\R_+\to\R_+$ with $\lim_{t\to\infty}f(t)=0$ such that for all but finitely many vertices $u\neq 0$ of $T$,
    \[V_u\subset C\left[\frac{u}{\|u\|},f(\|u\|)\right].\]
In words, $T$ is straight if the \emph{angular spread} of its sub-trees vanishes as their root tends to infinity. The relevance of straightness stems from the following key proposition due to \cite{howard2001geodesics}.

\begin{Proposition}[Proposition 2.8 in \cite{howard2001geodesics}]
    \label{prop_straightness_consequences}
    Assume that $T$ is straight, each vertex has a finite degree, and the vertex set $V_0$ of $T$ is asymptotically omnidirectional, i.e.\
        \[\left\{\frac{u}{\|u\|}:u\in V_0,~\|u\|\geq n\right\}\]
    is dense in $\mathbb S^{d-1}$ for each $n\geq 1$. Then,
    \begin{itemize}
        \item[\textnormal{(i)}] Each infinite branch of $T$ admits an asymptotic direction.
        \item[\textnormal{(ii)}] For each $\xi\in\mathbb S^{d-1}$, there exists at least one infinite branch with asymptotic direction $\xi$.
        \item[\textnormal{(iii)}] The set of $\xi\in\mathbb S^{d-1}$ such that there exists more than one infinite branch with asymptotic direction $\xi$ is dense in $\mathbb S^{d-1}$.
    \end{itemize}
\end{Proposition}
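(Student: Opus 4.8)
The plan is to establish the three items in turn; (i) and (ii) are short, and (iii) is the substantial part. Write $\hat u:=u/\|u\|$ for $u\ne0$, let $\angle$ denote the angular distance on $\mathbb S^{d-1}$, extended to $\R^d\setminus\{0\}$ by homogeneity, and note that straightness is the statement that $\angle(\hat u,w)\le f(\|u\|)$ for every $w\in V_u$ and every $u\ne0$ outside a finite set; recall also that, the tree being locally finite, any infinite branch $(u_n)_{n\ge0}$ leaves every bounded region, so $\|u_n\|\to\infty$. For (i): if $(u_n)$ is an infinite branch then, for $n$ past the finitely many exceptional vertices, $u_m\in V_{u_n}$ for all $m\ge n$, whence $\angle(\hat u_n,\hat u_m)\le f(\|u_n\|)\to0$; thus $(\hat u_n)$ is Cauchy on the compact sphere and converges to the asymptotic direction. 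For (ii): fix $\xi$ and, using omnidirectionality, choose $w_k\in V_0$ with $\|w_k\|\ge k$ and $\|\hat w_k-\xi\|<1/k$, so $\|w_k\|\to\infty$ and $\hat w_k\to\xi$; let $P_k$ be the finite branch from $0$ to $w_k$. Since each vertex has finitely many children, a König-type diagonal extraction yields an infinite branch $(v_n)$ such that, for every $n$, $v_n$ lies on $P_k$ for infinitely many $k$; for such $k$ and for $n$ past the exceptional vertices, $w_k\in V_{v_n}$ gives $\angle(\hat v_n,\hat w_k)\le f(\|v_n\|)$, and letting first $k\to\infty$ along the relevant subsequence and then $n\to\infty$ yields $\hat v_n\to\xi$.

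For (iii), I would first pass to the sub-tree $T_\infty$ of vertices with infinite descendant-set; it is the union of all infinite branches of $T$, it is locally finite and finitely branching, it inherits straightness, each of its vertices has a child, and it is still asymptotically omnidirectional, for if some open cap missed $\{\hat u:u\in T_\infty,\ \|u\|\ge n\}$ then the branch supplied by (ii) toward the centre of that cap would eventually place vertices of $T_\infty$ inside it. Next, equip the set $\partial T$ of infinite branches of $T$ with the topology in which two branches are close when they agree up to a large depth; as the boundary of the finitely branching rooted tree $T$ it is compact, and the asymptotic-direction map $\Xi:\partial T\to\mathbb S^{d-1}$ is continuous, since two branches agreeing up to depth $n$ share the vertex $v$ at that depth, whose norm goes to infinity as $n\to\infty$, so by straightness their $\Xi$-values lie within $2f(\|v\|)$ of $\hat v$; by (ii), $\Xi$ is onto.

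Then, suppose for a contradiction that $D:=\{\xi\in\mathbb S^{d-1}:\#\Xi^{-1}(\xi)\ge2\}$ is not dense, and fix a closed spherical cap $K$ with nonempty interior such that $K\cap D=\emptyset$. Then $\Xi$ restricts to a continuous bijection $\Xi^{-1}(K)\to K$ — onto by (ii), injective because each fibre over $K$ is a singleton — and, $\Xi^{-1}(K)$ being closed in the compact space $\partial T$ hence compact and $K$ being Hausdorff, this bijection is a homeomorphism, so $\Xi^{-1}(K)$ is connected. Choose $v\in T_\infty$ with $\hat v$ near the centre of $K$ and $\|v\|$ large enough that the cap $C[\hat v,f(\|v\|)]\cap\mathbb S^{d-1}$ is a proper subset of $K$, which is possible by omnidirectionality of $T_\infty$ and $f\to0$. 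The set of branches through $v$ is nonempty (by König, $V_v$ being infinite), clopen in $\partial T$, and, by straightness, has $\Xi$-image inside $C[\hat v,f(\|v\|)]\cap\mathbb S^{d-1}\subseteq K$, hence is contained in $\Xi^{-1}(K)$; being a nonempty clopen subset of the connected set $\Xi^{-1}(K)$ it must equal $\Xi^{-1}(K)$, so $\Xi$ maps it onto $K$, contradicting that its $\Xi$-image is a proper subset of $K$. Hence $D$ is dense. The routine part is (i)–(ii); the work is in (iii), and the points needing care are the passage to $T_\infty$ without losing omnidirectionality — which itself rests on (ii) — and pinning down a compact topology on $\partial T$ for which $\Xi$ is continuous and surjective, after which connectedness of $K$ closes the argument at once. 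I expect this last topological step to be the main obstacle.
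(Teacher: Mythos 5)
Your argument is correct, but note that the paper itself does not prove this statement: it is quoted verbatim from Howard and Newman \cite{howard2001geodesics} (their Proposition 2.8), so there is no internal proof to compare against. Measured against the original, your (i) and (ii) are the standard arguments (the Cauchy estimate $\angle(\hat u_n,\hat u_m)\le f(\|u_n\|)$, and a K\"onig extraction along the paths to $w_k$), and your (iii) is the same connectedness mechanism as Howard--Newman's, only packaged differently: they work with the set of directions in a cap and show that the directions whose (unique) geodesic passes through a fixed far vertex is relatively clopen, whereas you transport the argument to the compact end-space $\partial T$, prove the direction map $\Xi$ is a continuous surjection, and use that a continuous bijection from the compact fibre $\Xi^{-1}(K)$ onto the connected cap $K$ is a homeomorphism before playing the clopen cylinder $B_v$ against the proper sub-cap $C[\hat v,f(\|v\|)]\cap\mathbb S^{d-1}$; this is a clean and arguably more transparent formulation of the same idea. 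One caveat: your claim that every infinite branch satisfies $\|u_n\|\to\infty$ needs local finiteness of the vertex set as a point configuration in $\R^d$ (only finitely many vertices in each bounded region), which does not follow from finite vertex degrees alone; it is a standing assumption in Howard--Newman and holds almost surely for the RST, whose vertices form a Poisson process together with the origin, but you should state it explicitly since (i), (ii) and the continuity of $\Xi$ all rely on it. Also, in passing to $T_\infty$ you only need vertices of $T_\infty$ with large norm and direction near the centre of $K$, which, as you note, follows directly from (ii), so the full omnidirectionality of $T_\infty$ is not actually required.
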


In summary, under mild assumptions, establishing straightness provides extensive information about the infinite branches of a tree, and since such a property is deterministic, the ideal goal in the context of random geometric trees is to show that it holds almost surely. This is precisely what we achieve for the RST, stated as the main theorem of this work below. An illustration is provided in Figure~\ref{fig_straight}

\begin{Theorem}
    \label{thm_main}
    For every dimension $d\geq2$, the RST is almost surely straight, and Proposition~\ref{prop_straightness_consequences} applies on an event of full probability.
\end{Theorem}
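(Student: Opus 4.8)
The plan is to reduce straightness of the RST to a quantitative control on the fluctuations of RST paths around the straight line through the origin and their target points, and to obtain that control via a renewal-type decomposition combined with concentration inequalities. Write $\pi(x)$ for the RST path from $x \in \N$ down to the root, i.e.\ the sequence $x, \Psi(x), \Psi^2(x), \dots, 0$. The key geometric observation is that if every such path stays within a sublinear tube around the segment $[0,x]$ — say, $\pi(x) \subset \{z : \operatorname{dist}(z, [0,x]) \leq g(\|x\|)\}$ for a function $g$ with $g(t)/t \to 0$ — then for a vertex $u$ deep in the tree, every vertex $v \in V_u$ has its path passing through $u$, forcing $v$ to lie in a cone around $u/\|u\|$ whose aperture is controlled by $g(\|v\|)/\|u\|$; a Borel–Cantelli argument over a discretization of $\N$ then upgrades this to the uniform statement in the definition of straightness. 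So the real content is: \textbf{(A)} show RST paths do not deviate more than $O(\|x\|^{\alpha})$ from $[0,x]$ for some $\alpha < 1$, uniformly over $x \in \N \cap B(0,R)$ as $R \to \infty$, outside an event that is summable along $R = 2^k$.

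**The heart of the argument** is the renewal decomposition of a single RST path, which is advertised in the abstract as the main technical contribution. Because the RST is radial, a path started far from the origin looks locally — after rescaling and rotating the current position to a fixed direction — like a path in the \emph{Directed Spanning Forest} (DSF), the translation-invariant model obtained by letting the reference direction be fixed rather than radial; the DSF tools, including the author's recent 2025 work, provide regeneration structure and moment bounds for DSF paths. I would make this coupling precise: along $\pi(x)$, identify a sequence of \emph{regeneration levels} (radii $r_0 > r_1 > \cdots$ at which the path enters a fresh, independent configuration, e.g.\ via a localized "ancestry does not backtrack into a clean slab" event of positive probability), so that the angular increments of the path between successive regeneration levels are i.i.d.-like with exponential-type tails inherited from the DSF estimates, and the number of regeneration steps needed to descend from radius $R$ to radius $1$ is of order $\log R$ with good concentration. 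The transverse displacement of $\pi(x)$ is then a sum of these increments, and since each step at radius $\approx r$ moves the radial coordinate down by $O(1)$ while moving transversally by $O(1)$ in a direction that, viewed on the sphere of radius $r$, subtends an angle $O(1/r)$, the accumulated angular deviation is a weighted sum $\sum_k \Theta_k / r_k$ with $r_k$ geometrically spaced — summable with a heavy enough tail truncation. Feeding this into Bernstein's or Azuma's inequality yields $\P(\text{deviation} > \|x\|^{\alpha}) \leq \exp(-\|x\|^{c})$, which survives the union bound over the $O(R^d)$ points of $\N \cap B(0,R)$ and the sum over $R = 2^k$.

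**The remaining ingredients are comparatively routine.** Each vertex of the RST has finite degree almost surely (standard for Poisson-based geometric graphs, since the out-degree of $y$ is bounded by the number of Poisson points whose nearest closer-to-origin neighbor is $y$, a.s.\ finite). Asymptotic omnidirectionality of $V_0$ holds because $V_0 = \N \cup \{0\}$ and a homogeneous Poisson process a.s.\ has points in every cone at every distance. Given these, Proposition~\ref{prop_straightness_consequences} applies verbatim on the full-probability event where the deviation bound, finite degree, and omnidirectionality all hold, delivering conclusions (i)–(iii) and completing the proof of Theorem~\ref{thm_main}.

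**The main obstacle** I anticipate is constructing the regeneration structure rigorously in the radial setting: unlike the DSF, the RST is not translation invariant, the "direction to the root" changes along the path, and the relevant independent regions are curved slabs whose geometry degenerates near the origin — so one must either stop the renewal scheme at a radius $O(1)$ (where the remaining path is handled crudely) or carefully quantify the error in the DSF coupling as a function of the current radius. Controlling this coupling error, and ensuring the regeneration events retain positive probability uniformly in the radius (away from the origin), is where the delicate work lies; everything downstream is concentration plus Borel–Cantelli.
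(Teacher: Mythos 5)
Your overall architecture matches the paper's: reduce straightness to a sublinear deviation bound for RST paths with stretched-exponential failure probability (this is exactly the paper's Theorem~\ref{thm_deviation_control}), sum over the Poisson points (the paper uses the Mecke equation rather than a dyadic union bound, but that is cosmetic), check finite in-degree and asymptotic omnidirectionality, and invoke the Howard--Newman arguments to get straightness with $f(t)=t^{-1/2+\varepsilon}$ and then Proposition~\ref{prop_straightness_consequences}. The problem is that the heart of your plan, the regeneration structure, is both left open and, where you are quantitative, incorrect. You propose to ``make precise'' a coupling of RST paths with DSF paths after rescaling and rotating; this is precisely the route the paper identifies as unavailable beyond $d=2$ (the Baccelli--Bordenave coupling relies on planarity), and the paper explicitly does \emph{not} couple the two models. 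Instead it works directly with the RST: it encodes the revealed geometry in history sets $H_n$, proves that the radial width $L_n$ of the history drops below a fixed threshold at ``good steps'' with exponentially tight spacing (Theorem~\ref{thm_good_steps}), defines a pseudo-renewal event (a single Poisson point in a small lens near the radially shifted point $\pi_{\tau_n}^\uparrow$) which forces the next step to coincide with the step taken from a point with empty history, and then uses a reflection across the axis through the current position (Proposition~\ref{prop_symmetrization_decomposition}) to make the transverse increments conditionally \emph{symmetric}, so that Hoeffding applies; the regime after the terminal time $\Theta$ is handled separately through a tail bound on $R_\Theta$ (Proposition~\ref{prop_r_theta}). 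The only DSF input is a local stochastic-domination lemma applied to an idealized configuration exhibited inside the RST with positive probability, not a path coupling. You name this construction as ``the main obstacle'' and do not supply it, so the key new idea of the proof is missing from your proposal.

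There is also a concrete quantitative error in your sketch. For a unit-intensity Poisson process the RST edge lengths are of order $1$, so a path started at radius $R$ takes of order $R$ steps (and of order $R$ regenerations), the radius decreases by $O(1)$ per good step, and the successive radii are in no sense geometrically spaced; your picture of $O(\log R)$ regeneration levels and an angular sum $\sum_k \Theta_k/r_k$ over geometrically spaced $r_k$ describes a different kind of model (navigation schemes with long jumps), not the RST. The correct scaling is that the transverse displacement is a centered sum of order $\|x\|$ increments with good tails, giving deviation $\|x\|^{1/2+\varepsilon}$ outside probability $\exp(-c\|x\|^{2\varepsilon/5})$, which is what Theorem~\ref{thm_deviation_control} states and what makes the Mecke/Borel--Cantelli step work; no absolutely convergent angular series is available. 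Two minor points: what must be shown finite is the in-degree (the number of $x$ with $\Psi(x)=y$), which the paper proves via the tail bound of Lemma~\ref{lemma_psi_tail} rather than taking as standard; and the Howard--Newman reduction also needs a bound on individual edge lengths along the path (the paper uses $\|\Psi(x)-x\|\leq\|x\|^{1/2}$ for all large $\|x\|$), which your tube formulation should make explicit.
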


\begin{figure}[h]
    \centering
    \includegraphics[width=0.5\linewidth]{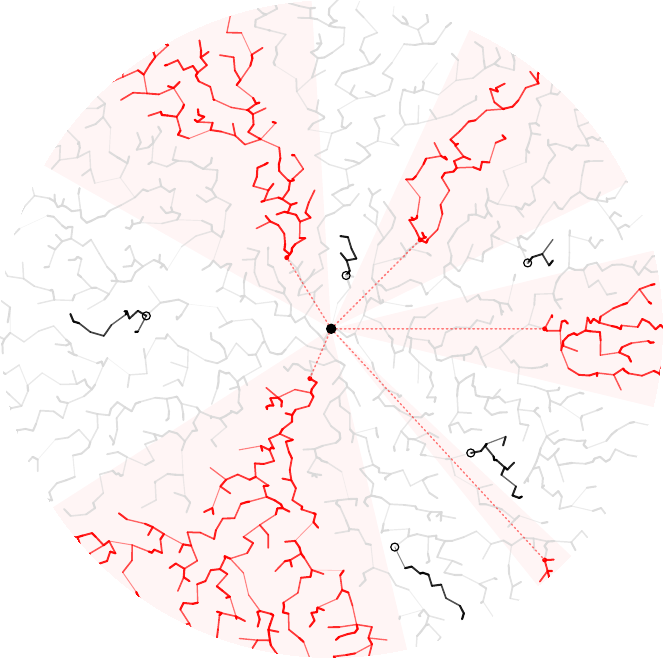}
    \caption{Simulation of the planar RST in a ball centered at the origin. Edges appear in light gray, with thickness decreasing with length. In red are highlighted some sub-trees exiting the displayed area, contained within translucent red cones pointing toward their sub-roots, with arbitrarily chosen apertures that appear to decrease to $0$ with the distance to the origin, suggesting straightness. In black are highlighted some finite sub-trees remaining within the area, illustrating that RST sub-trees are not necessarily infinite.}
    \label{fig_straight}
\end{figure}

We now discuss the origin of the model as well as relevant existing results and related works. The RST was introduced in 2007 by Baccelli and Bordenave \cite{BacceliBordenave}, who proved that it is almost surely straight in dimension~$2$ and, via Proposition~\ref{prop_straightness_consequences}, derived detailed information about its infinite branches.

These results were refined in 2013 by Baccelli, Coupier, and Tran \cite{baccelli2013semi}, who exploited \emph{planarity}, i.e.\ the fact that RST paths can not cross in dimension $2$, as illustrated in Figure~\ref{fig_rst}), to relate so-called \emph{exceptional directions}, those attained by more than one infinite branch, to \emph{competition interfaces}. This approach made it possible to establish the measurability of the set of exceptional directions and to prove that it is almost surely countable. By further combining this with \emph{isotropy} and an application of Fubini’s theorem, they showed that for each fixed deterministic $\xi\in\mathbb S^1$ there is almost surely a unique infinite branch asymptotically directed toward $\xi$. %Additionally, they showed that infinite branches are rare in the sense that the average number of infinite branches crossing the boundary of $B(0,r)$ is asymptotically negligible with respect to $r$.

The infinite branches in the \emph{hyperbolic} analogue of the RST have also been studied in \cite{flammant2020hyperbolic, coupier2023thicktraceinfinityhyperbolic}. In dimension~$2$, the same conclusions as in the Euclidean case hold, with the remarkable strengthening that, almost surely, no direction is attained by more than two infinite branches. In fact, this last property, often referred to as N3G (see, e.g., \cite{n3g_hyperbolic}) for \emph{no three geodesics} in the same direction, has attracted a lot of attention in the literature. Although expected to hold for a large collection of models, the only other examples where it has been proven, to the best of our knowledge, are geodesic trees arising in Exponential Last Passage Percolation \cite{n3g_lpp} and the Directed Landscape \cite{n3g_landscape}. 
In higher dimensions, the equivalent of straightness, coming with the consequences of Proposition~\ref{prop_straightness_consequences}, has been established almost surely. Moreover, it was shown that any fixed deterministic direction is almost surely reached by a unique infinite branch. All of these results crucially exploit the singular features of hyperbolic geometry, allowing an analysis beyond the planar setting.

For completeness’ sake, we also mention that results on local functionals, such as vertex degrees and edge lengths, have been established for the RST in both the Euclidean case and hyperbolic variant. We refer the interested reader to \cite{schulte2017central, rosen2025radial}.

Beyond the RST, straightness has also been investigated in other models. In the seminal work of \cite{howard2001geodesics}, the property was established for the geodesic trees of the \emph{Euclidean First Passage Percolation} in arbitrary dimension. The only other results concerning non-planar dimensions in the Euclidean setting, to our knowledge, are due to \cite{bordenave2008navigation}, who provided strong sufficient conditions for trees constructed on homogeneous Poisson point processes to be straight, and addressed the case of the \emph{small-world maximal progress navigation}. It is important to note, however, that these assumptions are not satisfied by the RST, regardless of the dimension.

In dimension $2$, further instances where straightness has been established include the Poisson Radial Tree \cite{RPTstraightness} as well as the geodesic trees arising in the \emph{Hammersley model} on $\R^2$ \cite{cator2011shape} and for \emph{Exponential Last Passage Percolation} on $\mathbb Z^2$ \cite{ferrari2005competition}. It is worth noting that, except for a few remarkable cases, the lack of isotropy in lattice models poses a significant barrier to obtaining such a property. Nevertheless, \emph{Busemann functions} have recently emerged as a powerful alternative for analyzing infinite geodesics in first and last passage percolation models in the planar setting, although many results still rely on the notorious \emph{global curvature} hypothesis, which is conjectured to hold in general but remains unproven (see, e.g., \cite{damron2014busemann, rassoul2018busemann}).

Overall, straightness remains the only general approach for analyzing infinite branches beyond planarity, yet results in such settings are very scarce. The RST is well understood in dimension $2$, and while it is natural to expect that the results extend to higher dimensions, in practice, the existing arguments rely crucially on the planar structure and break down once it is lost, leaving the problem open, as we explain in more detail below.\\

To understand why existing approaches fail to extend beyond planarity, let us first emphasize that studying the RST presents significant challenges due to the complex geometrical dependencies that arise in its construction. For each $x\in\R^d$ and $r\in\R_+$, let us denote by $B(x, r)$ the \emph{open ball} of \emph{radius} $r$ centered at $x$, and define the \emph{lens}
    \[B^0(x,r)\coloneqq B(x,r)\cap B(0,\|x\|).\]
In the RST, for any $x\in\N$, the presence of the edge $(x,\Psi(x))$ imposes that $B^0(x,\|\Psi(x)-x\|)$ contains no points of $\N$. Since this lens typically overlaps $B(0,\|\Psi(x)\|)$, the edge $(\Psi(x),\Psi\circ\Psi(x))$ is strongly constrained by $(x,\Psi(x))$. An illustration of this dependency phenomenon is presented in Figure \ref{fig_dependencies}.

\begin{figure}[h]
    \centering
    \includegraphics[width=0.3\linewidth]{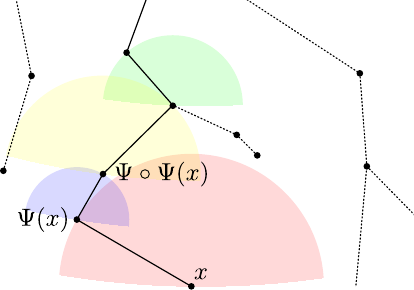}
    \caption{Visualization of the geometrical constraints imposed during the construction of the RST in dimension $2$. In translucent colors are represented the radial cropped balls that must be empty of points.}
    \label{fig_dependencies}
\end{figure}

This intricate correlation structure prevents one from directly exhibiting a Markovian property when iteratively exploring the forward path from a vertex. This limitation greatly complicates the analysis of how paths deviate from straight lines, which is crucial for proving straightness. To circumvent this difficulty in dimension~$2$, \cite{BacceliBordenave} introduced the Directed Spanning Forest (DSF), a closely related model that can be seen as the local limit of the RST around $-te_d$ as $t\to\infty$, and which is more tractable, in particular due to its translation invariance. Then, by analyzing DSF paths using tools from queuing theory and Markov chains, they were able to exhibit a regenerative structure and obtain useful control via classical concentration inequalities. While the relationship between the DSF and the RST through a local limit does not directly allow the deviation bounds to be transferred, they were able to exploit planarity to construct a global coupling between RST and DSF paths (see \cite[Theorem 5.1]{BacceliBordenave}), effectively enabling the derivation of straightness. In higher dimensions, however, this coupling with the DSF completely breaks down, forcing one to work directly with the RST, where translation invariance is absent and the radial structure presents significant challenges, particularly outside the planar setting and given the complex dependency structure.

In this work, we introduce a novel approach for establishing the straightness of the RST beyond planar settings, building instead on advances in the independent study of the DSF itself by \cite{coupier20212d}, recently extended to arbitrary dimensions in \cite{coalVSdim}. While the DSF was originally used as an auxiliary model in the planar setting through coupling arguments, here we do not rely on a direct link between the two models. Rather, we transfer and adapt analytical techniques developed independently for the DSF to the RST. Specifically, a key contribution of our work is the construction of a renewal-type decomposition for RST paths. Then, by applying classical concentration inequalities, we show that RST paths cannot deviate far from straight lines and derive straightness in arbitrary dimensions. It is worth noting that, while the strategy is natural in principle, its implementation involves delicate considerations.

\subsection*{Structure of the paper}

In Section~\ref{section_exploration}, we introduce the main notations by defining the \emph{exploration process} and the \emph{history sets} used to encode the geometric constraints of the RST, and we state the key deviation estimate we aim to establish (Theorem~\ref{thm_deviation_control}), along with a proof showing how the main theorem follows from it. In Section~\ref{section_good_steps}, we focus on controlling the history sets, proving that their size repeatedly returns below a specific threshold (Theorem~\ref{thm_good_steps}). In Section~\ref{section_symmetrization_decomposition}, we leverage this control for a symmetrization decomposition (Proposition~\ref{prop_symmetrization_decomposition}) with suitable fluctuation properties (Proposition~\ref{prop_fluctuation_control}), allowing us to conclude by the proof of Theorem~\ref{thm_deviation_control} in Section~\ref{section_conclusion} using classical concentration inequalities.

\section{The exploration process}

\label{section_exploration}

In this section, we introduce the main notation as well as the central process that explores iteratively forward paths of the RST, with history sets to encode dependencies. From now on, we fix a deterministic starting point $\pi_0\in\R^d$. We define $(\pi_n)_{n\geq1}$ by induction with
    \[\pi_n\coloneqq\Psi(\pi_{n-1})\]
for all $n\geq 1$. We also denote $R_n\coloneqq \|\pi_n\|$ for each $n\geq0$. We call $(\pi_n)_{n\geq0}$ the \emph{exploration process}. As discussed in the introduction, it is highly non-Markovian: each pair $(\pi_n, \pi_{n+1})$ for $n\geq0$ imposes strong geometric constraints on the underlying Poisson configuration, which may in turn influence arbitrarily distant future steps. In order to encode this dependencies, for each $n\geq 0$ we define the \emph{history set}
    \[H_n\coloneqq\bigcup_{k=0}^{n-1}B^0(\pi_k, \|\pi_{k+1}-\pi_k\|),\]
In words, $H_n$ collects all lenses that must be empty of Poisson points after the first $n$ exploration steps are revealed. For each $n\geq 0$, we also define the sigma field
    \[\F_n\coloneqq\sigma[\N\setminus B(0, R_n),~\N \cap\overline H_n],\]
where $\overline H_n$ denotes the closure of $H_n$. The family $(\F_n)_{n\geq 0}$ forms a filtration, to which the sequence $(\pi_n, H_n)_{n\geq 0}$ is adapted. It is worth noting that, for technical reasons to be addressed later, this filtration contains more information than what is strictly necessary.

We can now state the key proposition of this section, which justifies the importance of the
history sets. It formalizes the intuitive idea that after $n\geq 0$ steps of exploration, no information about $\N$ has been revealed in the region $B(0, R_n)\setminus \overline H_n$, and allows to present a crucial characterization of the sequence $(\pi_n, H_n)_{n\geq 0}$. We omit its proof, which is straightforward and technical.

\begin{Proposition}
    \label{prop_resampling}
    Let $\N'$ denote an independent copy of $\N$. For any $n\geq0$, conditionally on $\F_n$, the random variable $\N\cap B(0,R_n)\setminus \overline H_n$ is distributed as $\N'\cap B(0,R_n)\setminus H_n$. In particular, conditionally on $\F_n$, $\pi_{n+1}$ is distributed as $\Psi(\pi_n,\N'\setminus H_n)$.
\end{Proposition}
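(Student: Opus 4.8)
The plan is to read the proposition as the strong Markov property of the Poisson process $\N$ with respect to the ``stopping set'' $\overline H_n\cup B(0,R_n)^c$, and to prove it concretely via the multivariate Mecke equation. Throughout we work on the almost-sure event $\Omega_0$ on which $\N$ is locally finite and in general position — no ties in any of the distances arising in the construction — so that all the maps $\Psi$ occurring in the construction are well defined, and hence so are the sequences $(\pi_k)_{k\geq0}$ and $(H_k)_{k\geq0}$, with $(\pi_k,H_k)_{k\leq n}$ being $\F_n$-measurable as already noted. We may also assume $\pi_0\neq0$ and that the path has not reached $0$ by step $n$, the remaining case being trivial since then $R_n=0$ and $B(0,R_n)=\emptyset$.

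The geometric core is the following deterministic identity. Call a finite sequence $\mathbf x=(x_0,\dots,x_n)$ \emph{admissible} if $x_0=\pi_0$, the norms $\|x_0\|,\dots,\|x_n\|$ are strictly decreasing, and $x_{k+1}$ lies on the sphere $\partial B(x_k,\|x_{k+1}-x_k\|)$ for each $k$; set $h(\mathbf x)\coloneqq\bigcup_{k=0}^{n-1}B^0(x_k,\|x_{k+1}-x_k\|)$. Then, on $\Omega_0$ and for admissible $\mathbf x$,
\[\{(\pi_0,\dots,\pi_n)=\mathbf x\}=\{x_1,\dots,x_n\in\N\}\cap\{\N\cap h(\mathbf x)=\emptyset\}.\]
The inclusion $\subseteq$ is immediate from the definition of $\Psi$, since each revealed edge forces its lens to be empty of points of $\N$; conversely, emptiness of $B^0(x_k,\|x_{k+1}-x_k\|)$ together with $x_{k+1}\in\N\cup\{0\}$, $\|x_{k+1}\|<\|x_k\|$ and the absence of ties forces $\Psi(x_k)=x_{k+1}$. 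The crucial feature is that the right-hand side — and hence $\|x_n\|$, the closed set $\overline{h(\mathbf x)}$, and the restriction $\N\cap\overline{h(\mathbf x)}$ — is measurable with respect to $\N$ restricted to the closed set $\overline{h(\mathbf x)}$: indeed $h(\mathbf x)\subseteq\overline{h(\mathbf x)}$ and each $x_k$ sits on a boundary sphere of one of the lenses, hence in $\overline{h(\mathbf x)}$.

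Consequently, for fixed admissible $\mathbf x$, all generators of $\F_n$ coincide, on the event $\{(\pi_0,\dots,\pi_n)=\mathbf x\}$, with functionals of $\N$ restricted to the closed set $\overline{h(\mathbf x)}\cup\{y:\|y\|\geq\|x_n\|\}$, whose complement is exactly $D(\mathbf x)\coloneqq B(0,\|x_n\|)\setminus\overline{h(\mathbf x)}$; moreover, on that event $\N\cap B(0,R_n)\setminus\overline H_n=\N\cap D(\mathbf x)$, which is an independent unit-intensity Poisson process on $D(\mathbf x)$ by disjointness of the regions. To glue these path-by-path statements into one identity given $\F_n$, I would test against an arbitrary bounded $\F_n$-measurable variable $G$ and bounded measurable functional $F$ of a configuration, expand $G\,F\!\left(\N\cap B(0,R_n)\setminus\overline H_n\right)$ as a sum over the ordered tuples of pairwise distinct points $(x_1,\dots,x_n)$ of $\N$ carrying the indicator $\{(\pi_1,\dots,\pi_n)=(x_1,\dots,x_n)\}$, and apply the multivariate Mecke equation. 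In the resulting Lebesgue integral over $(\R^d)^n$, the added points lie on the boundary of the lenses and outside $D(\mathbf x)$, so the emptiness constraint and the $G$-factor depend only on $\N$ on $\overline{h(\mathbf x)}\cup\{\|y\|\geq\|x_n\|\}$; by independence over disjoint regions one may then replace $F(\N\cap D(\mathbf x))$ by the constant $\E[F(\N'\cap D(\mathbf x))]$ without changing the integral, and reversing the Mecke step produces $\E\!\left[G\,F\!\left(\N'\cap B(0,R_n)\setminus\overline H_n\right)\right]$. Since $\overline H_n$ and $H_n$ differ by a Lebesgue-null set and $\N'$ is independent of $H_n$, this equals $\E\!\left[G\,F\!\left(\N'\cap B(0,R_n)\setminus H_n\right)\right]$, which is the claim.

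For the ``in particular'' part, note that on $\Omega_0$ every point of $\N$ that is closer to $0$ than $\pi_n$ avoids $H_n$ by construction, so $\Psi(\pi_n,\N)$ is a fixed measurable function of $\N\cap B(0,R_n)\setminus\overline H_n$; applying that same function to $\N'\cap B(0,R_n)\setminus H_n$ gives $\Psi(\pi_n,\N'\setminus H_n)$, and the two have the same conditional law given $\F_n$ by the previous paragraph. The step I expect to be the main obstacle is exactly this passage from path-by-path independence to the statement conditional on $\F_n$: the realized exploration path ranges over an uncountable family, so the disintegration must be handled with care — either through the Mecke identity as above, or, equivalently, by verifying directly that $\overline H_n\cup B(0,R_n)^c$ is a stopping set and invoking the corresponding strong Markov theorem — together with the routine but fiddly bookkeeping of open versus closed balls and of the boundary spheres of the lenses.
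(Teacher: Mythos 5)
The paper deliberately omits a proof of this proposition ("straightforward and technical"), so there is nothing to compare against line by line; your proposal is a correct execution of exactly the intended argument, namely the strong Markov property of $\N$ with respect to the stopping set $\overline H_n\cup B(0,R_n)^c$, implemented by disintegrating over realized paths via the multivariate Mecke equation and using independence of the Poisson process on disjoint regions. The only places needing the "fiddly bookkeeping" you already flag are in the definition of admissibility: you must also require $\|x_{k+1}-x_k\|<\|x_k\|$ (otherwise $0$ would be the nearest eligible point) and $x_{k+1}\notin h(\mathbf x)$, and in the "in particular" step you should invoke the no-isosceles-triangles event to rule out points of $\N$ on $\partial H_n\cap B(0,R_n)$; none of these affect the validity of the argument, since they are deterministic constraints on $\mathbf x$ or almost-sure properties of $\N$.
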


A notable consequence of Proposition \ref{prop_resampling} is that the exploration process with history sets $(\pi_n, H_n)_{n\geq0}$ is a Markov chain.\\

We now present the main estimate we aim to establish, stated in the following theorem.

\begin{Theorem}
    \label{thm_deviation_control}
    For any $x, u\in\R^d$, define the orthogonal component of $u$ relatively to $x$ by
        \[\mathbf p_{\perp x}(u)\coloneqq u-\frac{x\cdot u}{\|x\|^2}x\]
    whenever $x\neq 0$, and $\mathbf p_{\perp 0}(u)\coloneqq u$ by convention. Then, for all $\varepsilon>0$ there exists $C_\varepsilon=C_\varepsilon(d)>0$ and $c_\varepsilon=c_\varepsilon(d)>0$ such that for all $\pi_0\in\R^d$,
        \[\P\left[\sup_{n\geq 0}\|\mathbf p_{\perp \pi_0}(\pi_n)\|>\|\pi_0\|^{\frac{1}{2}+\varepsilon}\right]\leq C_\varepsilon\exp\left[-c_\varepsilon\|\pi_0\|^{\frac{2}{5}\varepsilon}\right].\]
\end{Theorem}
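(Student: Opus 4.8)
The plan is to control the transverse displacement $\|\mathbf p_{\perp\pi_0}(\pi_n)\|$ by building a renewal-type decomposition of the exploration path along those indices where the history set is small, so that the increments become close to independent and mean-zero by symmetry, and then invoke a maximal concentration inequality. First I would use Theorem~\ref{thm_good_steps} (the ``good steps'' result promised in Section~\ref{section_good_steps}) to extract an increasing sequence of random times $(\tau_j)_{j\geq 0}$ at which $|H_{\tau_j}|$ (or the relevant size functional of the history set) lies below the prescribed threshold; crucially, the gaps $\tau_{j+1}-\tau_j$ and the geometry accumulated between consecutive good times should have stretched-exponential tails, and the radius $R_{\tau_j}$ should grow roughly linearly in $j$ as long as we have not yet drifted far. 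At such a good time, Proposition~\ref{prop_resampling} lets us resample the Poisson process outside $\overline H_{\tau_j}$, and because the complement of a small history set is, up to a controlled error, symmetric under the reflection fixing the line $\R\pi_0$ (or the hyperplane through $\pi_{\tau_j}$ orthogonal to $\pi_0$), the conditional law of the transverse increment over the next block is approximately symmetric. This is exactly the mechanism used for the DSF in \cite{coupier20212d, coalVSdim}, and the symmetrization decomposition (Proposition~\ref{prop_symmetrization_decomposition}) together with its fluctuation bound (Proposition~\ref{prop_fluctuation_control}) is meant to package precisely this.

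Next I would write $\mathbf p_{\perp\pi_0}(\pi_n)$ as a sum $\sum_{j} \Delta_j + (\text{error terms})$ where $\Delta_j$ is the transverse displacement accrued over the $j$-th renewal block and the error terms collect (a) the contribution of the initial segment before $\tau_0$, (b) the bias coming from the fact that the history-set complement is only approximately symmetric, and (c) boundary effects from blocks that are atypically long. Conditionally on $\F_{\tau_j}$, each $\Delta_j$ has (near-)zero mean and a tail controlled by the block length, so $M_J := \sum_{j\le J}\Delta_j$ is, after a small correction, a martingale with stretched-exponentially integrable increments. A maximal inequality of Azuma--Hoeffding or Freedman type (this is the ``classical concentration inequalities'' step flagged in Section~\ref{section_conclusion}) then gives $\P[\sup_{J}|M_J| > t] \le C\exp(-ct^2/J)$ on the event that all block lengths up to $J$ are not too large, and optimizing over the number of blocks needed to reach radius $\|\pi_0\|$ — which is of order $\|\pi_0\|$ — yields a bound of the form $C\exp(-c\|\pi_0\|^{2\varepsilon'})$ for the event $\{\|\mathbf p_{\perp\pi_0}(\pi_n)\| > \|\pi_0\|^{1/2+\varepsilon}\}$. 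The slightly lossy exponent $\tfrac{2}{5}\varepsilon$ in the statement presumably reflects the interplay between the quadratic Gaussian rate, the stretched-exponential block-length tails, and the need to union-bound over the (random, but stretched-exponentially many) blocks and over the supremum in $n$; I would track these three sources of loss carefully and choose the intermediate thresholds (history-set size cutoff, maximal admissible block length, number of blocks) as small powers of $\|\pi_0\|$ to balance them.

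The main obstacle, and the place where genuine work beyond the DSF analogy is required, is handling the radial geometry in the symmetrization step: unlike the DSF, the RST is not translation invariant, so the reflection symmetry that kills the mean of $\Delta_j$ is only exact for the sphere-based lenses $B^0(\cdot,\cdot)$ when centered appropriately, and the history set $H_{\tau_j}$ — even when small — sits at distance $R_{\tau_j}$ from the origin where the sphere $\partial B(0,R_{\tau_j})$ has curvature $1/R_{\tau_j}$. One must show that this curvature-induced asymmetry contributes a transverse bias per block that is summably small (e.g.\ $o(R_{\tau_j}^{-1+\text{const}})$ per step, hence total bias $o(\|\pi_0\|^{1/2+\varepsilon})$), which forces the whole argument to be run at radii bounded below by a power of $\|\pi_0\|$ and is the reason the estimate is stated with $\sup_{n\ge0}$ rather than for a single $n$: one needs the exploration to have genuinely left a neighborhood of the origin before the near-symmetry kicks in, and the contribution of the bounded initial phase must be absorbed into the constants. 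A secondary but nontrivial difficulty is verifying that the good times $\tau_j$ from Theorem~\ref{thm_good_steps} are compatible with the resampling of Proposition~\ref{prop_resampling} — i.e.\ that $\tau_j$ is an $(\F_n)$-stopping time, or can be replaced by one, so that the conditional-symmetry argument is legitimate — and that after resampling the process does not immediately rebuild a large history set in a way that destroys the independence of successive blocks; controlling this requires the quantitative return estimate of Theorem~\ref{thm_good_steps} to hold uniformly in the configuration on $\overline H_{\tau_j}$.
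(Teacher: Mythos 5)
Your high-level architecture (good steps, renewal decomposition, symmetrization, concentration) matches the paper's, but there is a genuine gap at the heart of the argument: you plan to treat the symmetry of the block increments as only \emph{approximate}, with a curvature-induced bias to be shown ``summably small,'' and you propose reflections either fixing the line $\R\pi_0$ or a hyperplane through $\pi_{\tau_j}$ orthogonal to $\pi_0$. Neither reflection is compatible with the radial geometry, and quantifying the resulting bias of $\Psi$ under an arbitrary admissible history set is exactly the hard problem the paper avoids. The correct mechanism is an \emph{exact} symmetry: at a pseudo-renewal time $w_n$ one reflects the Poisson configuration through the axis $\R\pi_{w_n}$ (the line through the origin and the \emph{current} point), but only inside the ball $B(0,\|\pi_{w_n}^\uparrow\|)$; this map preserves Lebesgue measure, and the pseudo-renewal event $Q_{\tau_n+1}$ together with $L_{\tau_n}<\kappa$ guarantees that the history set lies entirely outside that ball, hence is untouched. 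Consequently $\mathbf p_{\perp\pi_{w_n}}(\pi_{w_{n+1}})$ is \emph{exactly} symmetric conditionally on $\G_n$ (Proposition~\ref{prop_symmetrization_decomposition}); there is no bias term at all. Your program, as stated, leaves the bias estimate as the ``main obstacle'' without a method to prove it, and it is not clear it can be carried out.

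A second, related gap: the exact symmetry is in the direction orthogonal to $\pi_{w_k}$, not orthogonal to $\pi_0$, and you do not explain how to pass from one to the other. The paper does this with a deterministic observation: since $\|\pi_{w_{k+1}}\|\le\|\pi_{w_k}\|$, one has $\Delta_{k+1}\cdot\pi_{w_k}\le 0$, so for a unit vector $u\perp\pi_0$ with $\pi_{w_k}\cdot u\ge 0$ one gets $\Delta_{k+1}\cdot u\le\mathbf p_{\perp\pi_{w_k}}(\Delta_{k+1})\cdot u$; the sum of the right-hand sides is a bounded martingale-difference sum to which Hoeffding applies, and a union over the events $B_u^{i,j}$ (last sign change of $\pi_{w_\cdot}\cdot u$) and over the $O(\|\pi_0\|)$ blocks handles the sign condition. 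This one-sided comparison, not an approximate-symmetry bound, is what makes the radial setting tractable. Finally, note that the $w_n$ are \emph{not} $(\F_n)$-stopping times — you flag this concern but do not resolve it; the paper must introduce the enlarged filtrations $(\S_i)$ and $(\G_n)$ (Lemma~\ref{lemma_filtration}) precisely so that the conditional symmetry and the optional-stopping manipulations are legitimate.
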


\begin{figure}[h]
    \centering
    \includegraphics[width=0.5\linewidth]{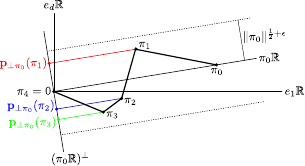}
    \caption{Illustration of Theorem~\ref{thm_deviation_control}.}
    \label{fig_deviation}
\end{figure}

In words, Theorem~\ref{thm_deviation_control} provides control over the \emph{deviation} of RST paths from \emph{straight lines}. An illustration is provided in Figure~\ref{fig_deviation}. The proof of the theorem is deferred to section~\ref{section_conclusion}: establishing this estimate will constitute the core of our work. It is worth mentioning that, in easier settings such as simple random walks, analogous deviation control can be obtained using classical concentration inequalities. In our setting, however, these tools cannot be applied directly due to the complex dependency structure. This is why we will need to construct a suitable renewal-type decomposition.

We now conclude this subsection by deriving our main theorem from Theorem~\ref{thm_deviation_control}. We first establish an elementary lemma, and then proceed to the proof of the theorem.

\begin{Lemma}
    \label{lemma_psi_tail}
    For all $x\in\R^d$ and $t\in\R_+$,
        \[\P(\|\Psi(x)-x\|>t)\leq e^{-(t/2)^d|B(0,1)|}\]
\end{Lemma}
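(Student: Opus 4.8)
The plan is to bound the tail probability by a simple geometric-packing argument exploiting the emptiness of the lens $B^0(x,\|\Psi(x)-x\|)$. Recall that the defining property of the edge $(x,\Psi(x))$ is that the lens $B^0(x,\|\Psi(x)-x\|)=B(x,\|\Psi(x)-x\|)\cap B(0,\|x\|)$ contains no point of $\N$. Hence, on the event $\{\|\Psi(x)-x\|>t\}$, the larger lens $B^0(x,t)=B(x,t)\cap B(0,\|x\|)$ is also empty of Poisson points, and since $\N$ is a unit-intensity homogeneous Poisson process, $\P(\|\Psi(x)-x\|>t)\leq\P\bigl(\N\cap B^0(x,t)=\emptyset\bigr)=e^{-|B^0(x,t)|}$. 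It therefore suffices to show the lower volume bound $|B^0(x,t)|\geq (t/2)^d|B(0,1)|$.

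The key step is this volume estimate, and the main point is that the lens $B(x,t)\cap B(0,\|x\|)$ always contains a ball of radius $t/2$. The natural candidate is the ball centered at $x-\tfrac{t}{2}\,\tfrac{x}{\|x\|}$ (i.e.\ moving from $x$ a distance $t/2$ toward the origin) with radius $t/2$; if $t/2\geq\|x\|$ one instead uses the ball of radius $\|x\|\cdot\tfrac{?}{}$... more carefully: consider the point $m\coloneqq\bigl(1-\tfrac{t}{2\|x\|}\bigr)x$ when $t\le 2\|x\|$, so that $\|m-x\|=t/2$ and $\|m\|=\|x\|-t/2$; then $B(m,t/2)\subset B(x,t)$ since any $z\in B(m,t/2)$ has $\|z-x\|\le\|z-m\|+\|m-x\|<t/2+t/2=t$, and $B(m,t/2)\subset B(0,\|x\|)$ since $\|z\|\le\|z-m\|+\|m\|<t/2+\|x\|-t/2=\|x\|$. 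Thus $B(m,t/2)\subset B^0(x,t)$ and $|B^0(x,t)|\ge|B(0,t/2)|=(t/2)^d|B(0,1)|$, giving the claim. In the remaining regime $t>2\|x\|$ one has $2\|x\|<t$, and since $B(0,\|x\|)\subset B(x,2\|x\|)\subset B(x,t)$ we get $B^0(x,t)=B(0,\|x\|)$; here $\|x\|$ could be smaller than $t/2$, so the bound $(t/2)^d|B(0,1)|$ would not follow directly — but in fact this case cannot contribute, because if $t>2\|x\|$ then $\|\Psi(x)-x\|\le\|\Psi(x)\|+\|x\|<2\|x\|<t$ deterministically (as $\|\Psi(x)\|<\|x\|$ by construction), so $\{\|\Psi(x)-x\|>t\}=\emptyset$ and the inequality holds trivially.

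Assembling these pieces: if $t>2\|x\|$ the event is empty and the bound is vacuous; if $t\le 2\|x\|$ (and $x\ne 0$) the lens $B^0(x,t)$ contains a ball of radius $t/2$, so emptiness of this lens has probability at most $e^{-(t/2)^d|B(0,1)|}$, and the event $\{\|\Psi(x)-x\|>t\}$ is contained in this emptiness event. The degenerate case $x=0$ is handled by the convention $\Psi(0)=0$, for which $\|\Psi(0)-0\|=0$ so the probability is $0$ for every $t\ge 0$ (and is $\le 1$ trivially when $t=0$, where the right-hand side equals $1$). The main obstacle — if one can call it that — is simply being careful about the case distinction on whether $t$ exceeds $2\|x\|$, ensuring the inscribed-ball construction stays inside both balls defining the lens; no genuine difficulty arises, which is consistent with this being an elementary lemma used as input to the substantive estimates later.
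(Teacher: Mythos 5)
Your proof is correct and follows essentially the same route as the paper's: bound the event by emptiness of the lens $B^0(x,t)$ and inscribe the ball of radius $t/2$ centered at $\bigl(1-\tfrac{t}{2\|x\|}\bigr)x$, with the large-$t$ regime dismissed because the event is deterministically empty (the paper cuts at $t\geq\|x\|$ using $\|\Psi(x)-x\|\leq\|x\|$, you cut at $t>2\|x\|$; both work). The only differences are cosmetic, such as your explicit treatment of $x=0$.
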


\begin{proof}
    Let $x\in\R^d$. First, note that $\|\Psi(x)-x\|<\|x\|$ by definition of $\Psi$. Hence, if $t\geq\|x\|$,
        \[\P(\|\Psi(x)-x\|>t)=0.\]
    Now assume $t<\|x\|$. Setting $c\coloneqq x(1-\frac{t}{2\|x\|})$ one has
        \begin{equation}
            \label{eq_ball_inclusion_tover2}
            B\left(c,\frac{t}{2}\right)\subset B^0(x, t).
        \end{equation}
    Indeed, if $z\in B(c,\frac{t}{2})$, then $\|z-x\|\leq \|z-c\|+\|c-x\|<\frac{t}{2}+\|x\|\frac{t}{2\|x\|}=t$ and as $\|x\|-\frac{t}{2}\geq 0$,
        \[\|z\|\leq\|z-c\|+\|c\|<\frac{t}{2}+\left(\|x\|-\frac{t}{2}\right)=\|x\|.\]
    Now observe that if $\|\Psi(x)-x\|>t$ then there must be no points of $\N$ in $B^0(x, t)$, and thus one can write
        \begin{equation*}
            \begin{split}
                \P(\|\Psi(x)-x\|>t)&\leq\P[\N\cap B^0(x, t)=\emptyset]\\
                &\leq \P[\N\cap B(c, t/2)=\emptyset]
                =e^{-|B(c, t/2)|}
                =e^{-(t/2)^d|B(0,1)|},
            \end{split}
        \end{equation*}
    where the second inequality follows from \eqref{eq_ball_inclusion_tover2}. The proof is complete.
\end{proof}

\begin{proof}[Proof of Theorem \ref{thm_main}]
    Let us first check that in the RST, with probability one, all vertices have finite in-degree. Using Lemma~\ref{lemma_psi_tail} and the Mecke equation, one has
        \[\E\left[\sum_{x\in\N}\1_{\|\Psi(x)-x\|>\|\pi_0\|^{\frac{1}{2}}}\right]=\int_{\R^d}\P\left[\|\Psi(x)-x\|>\|x\|^{\frac{1}{2}}\right]\mathrm dx\leq\int_{\R^d}\alpha e^{-\beta \|x\|^{\frac{d}{2(d+1)}}}\mathrm dx<\infty,\]
    hence there is almost surely only a finite number of points $x\in\N$ such that $\|\Psi(x)-x\|>\|x\|^{\frac{1}{2}}$. 
    For each $y\in\N$, let $K_y\coloneqq\inf\{t>0:\forall s\geq t,~s^{1/2}<s-\|y\|\}<\infty$. Then one can write
        \[\{x\in\N:\Psi(x)=y\}\subset B(y,\|y\|+K_y)\cup\{x\in\R^d:\|\Psi(x)-x\|>\|x\|^{\frac{1}{2}}\},\]
    Indeed, if $x\in\N$ is such that $\Psi(x)=y$ and $\|x-y\|\geq K_y+\|y\|$, then $\|x\|\geq\|x-y\|-\|y\|\geq K_y$ hence
        \[\|x\|^\frac{1}{2}<\|x\|-\|y\|\leq\|x-y\|=\|\Psi(x)-x\|.\]
    Since $\N\cap B(y, \|y\|+K_y)$ is almost surely finite by local finiteness of $\N$, one deduces that the RST satisfies the finite in-degree property with probability one.

    As the property of being almost surely asymptotically omnidirectional is straightforward for the Poisson point process $\N$, %
    %Let us now turn to the asymptotically omnidirectional property of $\N$. Let $\{\zeta_n\}_{n\geq1}$ be deterministic countable fixed dense subset of $\mathbb S^{d-1}$. For each $m,n,k\geq 1$, we have
    %    \begin{equation*}
    %        \begin{split}
    %            \P\left[\left\{\xi\in\mathbb S^{d-1}:\xi\cdot\zeta_n>1-\frac{1}{k}\right\}\cap\left\{\frac{u}{\|u\|}:u\in\N,~\|u\|\geq m\right\}=\emptyset\right]\qquad&\\
    %            =\P\left(\N\cap\left\{x\in\R^d:\frac{x\cdot\zeta_n}{\|x\|}>1-\frac{1}{k},~\|x\|>m\right\}=\emptyset\right)&=0,
    %        \end{split}
    %    \end{equation*}
    %where one used that the \emph{truncated cone} $\{x\in\R^d:\frac{x\cdot\zeta_n}{\|x\|}>1-\frac{1}{k},~\|x\|>m\}$ has infinite Lebesgue measure to get the last equality. This shows that with probability one, for each $n,k,m\geq 1$,
    %    \[\left\{\xi\in\mathbb S^{d-1}:\xi\cdot\zeta_n>1-\frac{1}{k}\right\}\cap\left\{\frac{u}{\|u\|}:u\in\N,~\|u\|\geq m\right\}\neq\emptyset,\]
    %which implies that $\N$ is almost surely asymptotically omnidirectional.
    it remains to show the straightness property of the RST. To that end, fix $\varepsilon\in(0,\frac{1}{2})$ and observe that using the Mecke equation with Theorem \ref{thm_deviation_control}, denoting
        \[\dev(x)\coloneqq\sup_{n\geq0}\|\mathbf p_{\perp x}(\Psi^n(x))\|\]
    for every $x\in\R^d$, where $\Psi^n$ refers to the $n$-th composition of $\Psi$, one can write
        \begin{equation*}
            \begin{split}
                \E\left[\sum_{x\in\N}\1_{\dev(x)>\|x\|^{\frac{1}{2}+\varepsilon}}\right]&=\int_{\R^d}\P\left[\sup_{n\geq0}\|\mathbf p_{\perp \pi_0}(\pi_n)\|>\|\pi_0\|^{\frac{1}{2}+\varepsilon}\right]\mathrm d\pi_0\\
                &\leq\int_{\R^d}C_\varepsilon\exp\left(-c_\varepsilon\|\pi_0\|^{\frac{2}{5}\varepsilon}\right)\mathrm d\pi_0<\infty.
            \end{split}
        \end{equation*}
    Therefore, there is almost surely only finitely many points $x\in\N$ that satisfies $\dev(x)>\|x\|^{\frac{1}{2}+\varepsilon}$. Combining with the previous arguments, one deduces that there exists almost surely $M_\varepsilon>0$ such that for all $x\in\N$ with $\|x\|>M_\varepsilon$,
        \[\dev(x)\leq\|x\|^{\frac{1}{2}+\varepsilon}\quad\text{and}\quad\|\Psi(x)-x\|\leq\|x\|^\frac{1}{2}.\]
    Finally, applying the arguments of \cite[Theorem 2.6 and Lemma 2.7]{howard2001geodesics} with the exponents $\frac{3}{4}$ replaced by $\frac{1}{2}$, one obtains that the RST is almost surely straight for the function 
        \[f:t\mapsto t^{-\frac{1}{2}+\varepsilon},\]
    which completes the proof.
\end{proof}

\section{Good steps}

\label{section_good_steps}

As the history sets encode the geometrical constraints imposed during the exploration process, it is
crucial to control their size to understand how far the interactions can extend into the future. To this end, for each $n\geq0$ we define
    \[r_n\coloneqq\inf(\{R_n\}\cup\{\|x\|:x\in H_n\})\quad\text{and}\quad L_n\coloneqq r_n-R_n.\]
In words, $L_n$ represent the radial width of $H_n$ in the ball $B(0, R_n)$. The smaller this quantity, the weaker the influence of $H_n$ on the next step of the process. The goal of this section is to show the following.

\begin{Theorem}
    \label{thm_good_steps}
    There exist constants $\lambda,\kappa,c_\tau,C_\tau>0$, depending only on $d$, such that the following holds. For any $\pi_0\in\R^d$, defining the stopping times $\Theta$ and $(\tau_n)_{n\geq0}$ by
        \begin{equation}
            \label{def_theta}
            \Theta\coloneqq\inf\{n\geq0:R_n<1+\kappa~\textnormal{ or }~(L_n)^{d+1}>\lambda R_n\},
        \end{equation}
    with $\tau_0\coloneqq0$ and, for each $n\geq0$,
        \begin{equation}
            \label{def_tau}
            \tau_{n+1}\coloneqq\Theta\wedge\inf\{i\in\mathbb N:i>\tau_n,~L_i<\kappa,~R_{\tau_n}-R_i\geq\kappa+1\},
        \end{equation}
    one has: for all $n,k\geq0$,
        \[\P(\tau_{n+1}-\tau_n>k~|~\F_{\tau_n})\leq C_\tau e^{-c_\tau k}.\]
    The exploration process steps associated with the sequence of times $(\tau_n)_{n\geq0}$ are then called \emph{good steps}.
\end{Theorem}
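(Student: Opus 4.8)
The plan is to extract, at every exploration step, a one-step event of probability bounded below by a constant $p_0=p_0(d)$ forcing a short and strongly inward jump, and then to combine these estimates through an exponential moment argument. We condition throughout on $\F_{\tau_n}$ and may assume $\tau_n<\Theta$, since otherwise $\tau_{n+1}=\tau_n$ and the bound is trivial; in that case $L_{\tau_n}<\kappa$, because either $\tau_n=0$ — where $H_0=\emptyset$ and $L_0=0$ — or $\tau_n$ was attained through the infimum in \eqref{def_tau}. We will freely use that $\|\pi_{k+1}-\pi_k\|\le\|\pi_k\|$, so that each jump is at most the current radius and, by the law of cosines, the radial direction turns by less than a right angle at every step; and that $\{\tau_{n+1}-\tau_n>k\}$ forces $\Theta>\tau_n+k$, whence $R_i\ge 1+\kappa$ for all $i\le\tau_n+k$.

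The first step is to reformulate the requirement on $L_i$ in \eqref{def_tau}. Writing $\ell_k:=\|\pi_{k+1}-\pi_k\|$, the lens $B^0(\pi_k,\ell_k)$ reaches down to radius $R_k-\ell_k$, so this requirement amounts to $R_i<R_k-\ell_k+\kappa$ for every $k<i$: for $k<\tau_n$ it holds automatically from $L_{\tau_n}<\kappa$ and the monotonicity of $R$, and for $\tau_n\le k<i$ it holds automatically unless $\ell_k\ge\kappa$ (a \emph{heavy} step), in which case it demands a descent of more than $\ell_k-\kappa$ since step $k$. Calling a step \emph{good} if $\ell_i\le\delta$ and $R_i-R_{i+1}\ge\eta$ — with $0<\eta<\delta<\kappa$ constants to be fixed — and tracking the deficit
\[
D_i:=R_i-\min\Big\{R_{\tau_n}-\kappa-1,\ \min_{\text{heavy }k<i}\big(R_k-\ell_k+\kappa\big)\Big\},
\]
one checks that $D_{\tau_n}=\kappa+1$, that a good step lowers $D$ by at least $\eta$, that a heavy step raises it by at most $(\ell_k-\kappa)_+$, and that any other step leaves it non-increasing. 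Hence $\{\tau_{n+1}-\tau_n>k\}\subseteq\{D_{\tau_n+k}>0\}$, which yields
\[
\eta\,G_k\ <\ (\kappa+1)+\Sigma_k,\qquad G_k:=\sum_{j=0}^{k-1}\1_{\{\text{step }\tau_n+j\text{ good}\}},\quad \Sigma_k:=\sum_{j=0}^{k-1}\big(\ell_{\tau_n+j}-\kappa\big)_+ .
\]

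The two one-step estimates underlying everything — both obtained from Proposition~\ref{prop_resampling}, which lets us treat $\pi_{i+1}$, conditionally on $\F_i$, as $\Psi(\pi_i,\N'\setminus H_i)$ for an independent copy $\N'$ of $\N$ — are: \textnormal{(i)} a light jump tail $\P(\ell_i>t\mid\F_i)\le\alpha e^{-\beta t}$, uniform in $i$; and \textnormal{(ii)} the good-step lower bound $\P\big(\ell_i\le\delta,\ R_i-R_{i+1}\ge\eta\bigm|\F_i\big)\ge p_0>0$ whenever $R_i\ge 1+\kappa$, with $\alpha,\beta,\delta,\eta,p_0$ depending only on $d$. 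Both reduce to producing, inside $B^0(\pi_i,t)$ (with $t=\delta$ for (ii)), a region of volume $\gtrsim t^d$ disjoint from $H_i$, on which one then asks $\N'$ to place, or not place, a point. Near $\pi_i$ the most recent lens $B^0(\pi_{i-1},\ell_{i-1})$ obstructs only a half-space, and since the direction turned by less than a right angle this leaves an inward wedge of aperture bounded below; moreover the portion of $H_i$ responsible for a large $L_i$ protrudes only to radius $R_i-L_i$, well below such a wedge, so it cannot interfere. \textbf{The main obstacle is to control the cumulative intrusion of the older lenses} $B^0(\pi_k,\ell_k)$, $k<i-1$: although none of them contains $\pi_i$, they might a priori accumulate against it, and one must show that, with probability bounded below, only boundedly many reach within $\delta$ of $\pi_i$, so that the wedge can be steered clear of all of them. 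I expect to carry this out by combining the geometry of the exploration with the Directed Spanning Forest estimates of \cite{coalVSdim}, and it is here that the precise truncation $\Theta$, and the parameter $\lambda$, play their part; estimate (i) then follows along the same lines by rescaling the same construction.

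The conclusion is an exponential argument. Fix $\theta\in(0,\beta)$ and, using (i), choose $\kappa$ large — depending only on $d$, and in particular $\kappa>\delta$ — so that $q_\kappa:=\sup_i\E\big[e^{\theta(\ell_i-\kappa)}\1_{\{\ell_i>\kappa\}}\bigm|\F_i\big]<\tfrac12(1-e^{-\theta\eta})p_0$, which is possible because $q_\kappa\to0$ as $\kappa\to\infty$ by (i). Since a good step has $\ell_i\le\delta<\kappa$, splitting on whether step $\tau_n+j$ is good and invoking (ii) gives, on $\{R_{\tau_n+j}\ge 1+\kappa\}$,
\[
\E\Big[\exp\big(-\theta\eta\,\1_{\{\text{step }\tau_n+j\text{ good}\}}+\theta(\ell_{\tau_n+j}-\kappa)_+\big)\,\Bigm|\,\F_{\tau_n+j}\Big]\ \le\ 1-(1-e^{-\theta\eta})p_0+q_\kappa\ =:\ \rho\ <\ 1 .
\]
Starting from $\1_{\{\tau_{n+1}-\tau_n>k\}}\le\1_{\{\eta G_k<(\kappa+1)+\Sigma_k\}}\prod_{j=0}^{k-1}\1_{\{R_{\tau_n+j}\ge1+\kappa\}}$, bounding the first indicator by $e^{\theta(\kappa+1)}\prod_{j=0}^{k-1}\exp\big(-\theta\eta\1_{\{\text{step }\tau_n+j\text{ good}\}}+\theta(\ell_{\tau_n+j}-\kappa)_+\big)$ via Markov's inequality, and iterating conditional expectations from $j=k-1$ down to $j=0$ — using the display above at each step where $R_{\tau_n+j}\ge1+\kappa$, and the vanishing indicator otherwise — we obtain
\[
\P(\tau_{n+1}-\tau_n>k\mid\F_{\tau_n})\ \le\ e^{\theta(\kappa+1)}\,\rho^{\,k}
\]
for all $k\ge0$, which is the asserted estimate with $C_\tau=e^{\theta(\kappa+1)}$ and $c_\tau=-\log\rho>0$; the constant $\lambda$ is then fixed at any value consistent with its use in the later sections.
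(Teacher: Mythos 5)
Your combinatorial skeleton is fine: rewriting the condition $L_i<\kappa$ as $R_i<R_k-\ell_k+\kappa$ for all $k<i$, tracking the deficit $D_i$, and deducing $\eta G_k<(\kappa+1)+\Sigma_k$ on $\{\tau_{n+1}-\tau_n>k\}$ is a legitimate bookkeeping device, and your final exponential-moment iteration would indeed close the argument \emph{if} your one-step estimates (i) and (ii) held, uniformly in $i$, conditionally on $\F_i$. But those two estimates are exactly where the content of Theorem~\ref{thm_good_steps} lies, and you do not prove them: you explicitly defer what you call ``the main obstacle'' (the cumulative intrusion of older lenses near $\pi_i$) to an expectation that it can be handled via \cite{coalVSdim}. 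Moreover, in the uniform form you need, they are not available. Conditionally on $\F_i$ the only emptiness one can force is that of $B^0(\pi_i,t)\setminus H_i$, and the free volume guaranteed by Lemma~\ref{lemma_empty_ball} sits at the \emph{bottom} of the lens of depth $L_i$; accordingly the jump length $\ell_i$ has an exponential conditional tail only beyond the scale $L_i$, not beyond a constant — this is precisely what Proposition~\ref{prop_upper_control_proba} bounds, since $r_i-r_{i+1}=(\ell_i-L_i)_+$. Before $\Theta$, $L_i$ may be as large as $(\lambda R_i)^{1/(d+1)}$, so your $q_\kappa=\sup_i\E[e^{\theta(\ell_i-\kappa)}\1_{\{\ell_i>\kappa\}}\mid\F_i]$ is not small for any fixed $\kappa$, and (i) fails as stated. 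Estimate (ii) is worse: it asks for a jump of \emph{constant} length $\le\delta$ together with progress $\ge\eta$ with conditional probability bounded below over every admissible history, but the constraint $i<\Theta$ only limits how far $H_i$ protrudes radially below $R_i$, not how earlier lenses (none containing $\pi_i$, centers at radius $\ge R_i$) may crowd the immediate neighbourhood of $\pi_i$, leaving an arbitrarily thin — or purely outward-pointing — free cone there; no uniform $p_0$ can be extracted from $i<\Theta$ alone, and none is proved.

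This is exactly the difficulty the paper's proof is built to avoid. It never requires short good jumps: constant-probability \emph{radial} progress is obtained by a possibly long jump into the free ball at the bottom of the $L_i$-lens, via Lemmas~\ref{lemma_empty_ball}, \ref{lemma_flatness_default}, \ref{lemma_radial_upward_progress} and the stochastic-domination input of \cite{coalVSdim} (Propositions~\ref{prop_progress}, \ref{prop_lower_control}), and the absence of a uniform jump tail is compensated by controlling $L_n$ itself: Cauchy--Schwarz combines the $r$-control (Proposition~\ref{prop_upper_control_moment}) with the $R$-control to give $\E[e^{L_{n+1}-L_n}\mid\F_n]\le q<1$ above level $\kappa$ (Proposition~\ref{prop_exp_moment_q}), and the theorem then follows from the two stopping-time estimates (Lemmas~\ref{lemma_T_n}, \ref{lemma_sigma_n}). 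In your scheme, replacing the unavailable uniform bounds would force you to track $L_i$ (or equivalently the history's lower radius) step by step, at which point you are essentially reconstructing the paper's argument. As written, the proposal leaves the core probabilistic estimates unestablished, so it does not constitute a proof.
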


To establish this theorem, we first control from above the growth of the non-decreasing sequence $(-r_n)_{n\geq0}$, then control from below that of $(-R_n)_{n\geq0}$, and finally combine the two estimates to conclude.

\subsection{Upper control on the growth of $(-r_n)_{n\geq0}$}

In this subsection, we focus on establishing an upper bound on the growth of the sequence $(-r_n)_{n\geq0}$. First, observe that for each $n\geq0$ and $h\in\R_+$, $r_n-r_{n+1}>h$ implies by construction that $B^0(\pi_n, r_n+h)\cap\N=\emptyset$. In particular, the set
    \[B^0(\pi_n, L_n+h)\setminus \overline H_n\]
must be empty of Poisson points. Since this region is included in $B(0,R_n)\setminus\overline H_n$, applying Proposition \ref{prop_resampling}, we get that conditionally on $\F_n$, the number of points in $B^0(\pi_n, r_n+h)\setminus \overline H_n$ is a Poisson random variable of parameter $|B^0(\pi_n, L_n+h)\setminus H_n|$, where $|\cdot|$ stands for the Lebesgue measure in $\R^d$, hence
    \begin{equation}
        \label{eq_proba_overshoot}
        \P[r_n-r_{n+1}>h~|~\F_n]\leq\P[\N\cap B^0(\pi_n, L_n+h)\setminus \overline H_n~|~\F_n]=e^{-|B^0(\pi_n, L_n+h)\setminus H_n|}.
    \end{equation}
The strategy is now to control $|B^0(\pi_n, r_n+h) \setminus H_n|$ independently of the shape of $H_n$. This will be accomplished via the following key lemma.

\begin{Lemma}
    \label{lemma_empty_ball}
    Let $\ell\in[0,1]$, $\rho>0$ and $c\in\R^d$ be such that
        \[\|c\|\geq 1,\quad B(c,\rho)\cap B(0,1-\ell)=\emptyset\quad\text{and}\quad-e_d\notin B(c,\rho).\]
    Then, setting $\alpha_\ell\coloneqq \sqrt{2-\ell}-1\in[0,1]$,
        \[B(-[1-\ell]e_d, \alpha_\ell\ell)\cap B(c, \rho)=\emptyset.\]
    In particular, setting $x_\ell\coloneqq-(1-\ell+\frac{\alpha_\ell}{2}\ell)e_d$, one has
        \[B\Big(x_\ell,\frac{\alpha_\ell}{2}\ell\Big)\subset B^0(-e_d,\ell)\setminus B(c, \rho).\]
\end{Lemma}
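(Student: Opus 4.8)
The plan is to work in the normalized picture where the ``inner'' ball is $B(0,1)$ and the distinguished boundary point is $-e_d$; the hypotheses say that $B(c,\rho)$ avoids the slightly shrunken ball $B(0,1-\ell)$ and does not contain $-e_d$. The quantity $\alpha_\ell = \sqrt{2-\ell}-1$ is chosen so that $(1-\ell+\alpha_\ell\ell)^2 = (1-\ell)^2 + 2(1-\ell)\alpha_\ell\ell + \alpha_\ell^2\ell^2$ simplifies nicely; indeed one checks $(1+\alpha_\ell)^2 = 2-\ell$, i.e.\ $\alpha_\ell^2 + 2\alpha_\ell = 1-\ell$, which is the algebraic identity that will make the estimate tight. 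So the first step is to record this identity and note $\alpha_\ell\in[0,1]$ for $\ell\in[0,1]$.

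**The main claim: $B(-[1-\ell]e_d,\alpha_\ell\ell)$ is disjoint from $B(c,\rho)$.** Suppose for contradiction there is a point $z$ in both balls. From $z\in B(c,\rho)$ and $B(c,\rho)\cap B(0,1-\ell)=\emptyset$ we get $\|z\|\ge 1-\ell$. From $z\in B(-[1-\ell]e_d,\alpha_\ell\ell)$ we get $\|z + (1-\ell)e_d\| < \alpha_\ell\ell$. The idea is to derive a lower bound on $\rho$ and an upper bound on $\|c-(-e_d)\|$ (or rather play the two ball-avoidance conditions against each other) to reach a contradiction with $-e_d\notin B(c,\rho)$. Concretely: since $z$ is close to $-(1-\ell)e_d$ and $\|z\|\ge 1-\ell$, the point $z$ lies essentially ``radially outward'' from $-(1-\ell)e_d$, so one can bound $\|z-(-e_d)\|$. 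Write $z = -(1-\ell)e_d + w$ with $\|w\|<\alpha_\ell\ell$; then $\|z\|^2 = (1-\ell)^2 - 2(1-\ell)w_d + \|w\|^2 \ge (1-\ell)^2$, forcing $-2(1-\ell)w_d + \|w\|^2 \ge 0$. Meanwhile $\|z+e_d\|^2 = \ell^2 + 2\ell w_d + \|w\|^2$. Combining the constraint $-2(1-\ell)w_d \ge -\|w\|^2 > -\alpha_\ell^2\ell^2$ with $\|w\|^2<\alpha_\ell^2\ell^2$ gives $w_d < \frac{\alpha_\ell^2\ell^2}{2(1-\ell)}$ (when $\ell<1$), hence
\[
\|z+e_d\|^2 < \ell^2 + \frac{\alpha_\ell^2\ell^3}{1-\ell} + \alpha_\ell^2\ell^2 = \ell^2(1+\alpha_\ell^2) + \frac{\alpha_\ell^2\ell^3}{1-\ell}.
\]
Using $\alpha_\ell^2 = 1-\ell-2\alpha_\ell$ one should be able to massage this into $\|z+e_d\|^2 < (\alpha_\ell\ell)^2$ or a related clean bound, so that $z\in B(-e_d,\alpha_\ell\ell)$; but $z$ also lies in $B(c,\rho)$, and $B(c,\rho)$ avoids $B(0,1-\ell)\ni$ points near $-(1-\ell)e_d$... the cleanest contradiction is probably: $z\in B(c,\rho)$ together with $\|z-(-e_d)\|$ small would, by convexity/geometry of the situation, force $-e_d\in B(c,\rho)$. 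The precise chain is: show that the nearest point of $B(c,\rho)$ to the origin direction argument places $-e_d$ between $0$ and $z$ on the relevant ray, and since $B(c,\rho)$ is convex and avoids $B(0,1-\ell)$ but contains $z$, it must contain the segment from $z$ toward $-e_d$, hence $-e_d$. I expect this convexity-plus-avoidance argument, making rigorous ``the ball $B(c,\rho)$ cannot sneak between $-(1-\ell)e_d$ and $-e_d$ without containing $-e_d$,'' to be the main obstacle, and the $\alpha_\ell$ identity is exactly what makes the numbers close.

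**The ``in particular'' statement.** Once disjointness of $B(-[1-\ell]e_d,\alpha_\ell\ell)$ from $B(c,\rho)$ is established, set $x_\ell = -(1-\ell+\tfrac{\alpha_\ell}{2}\ell)e_d$, the center of the sub-ball of radius $\tfrac{\alpha_\ell}{2}\ell$ sitting inside $B(-[1-\ell]e_d,\alpha_\ell\ell)$ and closest to the boundary point $-e_d$. Disjointness from $B(c,\rho)$ is immediate since $B(x_\ell,\tfrac{\alpha_\ell}{2}\ell)\subset B(-[1-\ell]e_d,\alpha_\ell\ell)$. It then remains to check $B(x_\ell,\tfrac{\alpha_\ell}{2}\ell)\subset B^0(-e_d,\ell) = B(-e_d,\ell)\cap B(0,1)$. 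For the inclusion in $B(-e_d,\ell)$: the farthest point of the small ball from $-e_d$ is at distance $\|x_\ell-(-e_d)\| + \tfrac{\alpha_\ell}{2}\ell = (\ell - \tfrac{\alpha_\ell}{2}\ell) + \tfrac{\alpha_\ell}{2}\ell = \ell$, so it is contained (with equality at one boundary point, which is fine). For the inclusion in $B(0,1)$: the farthest point from $0$ is at distance $\|x_\ell\| + \tfrac{\alpha_\ell}{2}\ell = (1-\ell+\tfrac{\alpha_\ell}{2}\ell) + \tfrac{\alpha_\ell}{2}\ell = 1-\ell+\alpha_\ell\ell = 1-\ell(1-\alpha_\ell) \le 1$ since $\alpha_\ell\le 1$. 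Both are one-line computations. I would present the proof in the order: (1) algebraic facts about $\alpha_\ell$; (2) the disjointness claim via contradiction and the convexity argument; (3) the two elementary containments for the final sub-ball.
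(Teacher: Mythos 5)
Your treatment of the ``in particular'' part is correct and matches the paper: once the disjointness of $B(-[1-\ell]e_d,\alpha_\ell\ell)$ and $B(c,\rho)$ is known, the three containments for $B(x_\ell,\tfrac{\alpha_\ell}{2}\ell)$ are indeed one-line center-plus-radius computations. The problem is that the main disjointness claim is not actually proved in your proposal. Your quantitative attempt does not close: from $\|w\|<\alpha_\ell\ell$ and $w_d<\tfrac{\alpha_\ell^2\ell^2}{2(1-\ell)}$ you get
\[
\|z+e_d\|^2<\ell^2\Bigl(1+\alpha_\ell^2+\tfrac{\alpha_\ell^2\ell}{1-\ell}\Bigr),
\]
which near $\ell=0$ gives $\|z+e_d\|\lesssim\ell\sqrt{4-2\sqrt2}\approx1.08\,\ell$, nowhere near the hoped-for $\|z+e_d\|<\alpha_\ell\ell\approx0.41\,\ell$; so $z$ need not lie in $B(-e_d,\alpha_\ell\ell)$ and no contradiction follows from this line. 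Your fallback ``convexity-plus-avoidance'' argument is also invalid as stated: convexity of $B(c,\rho)$ only guarantees it contains segments between its \emph{own} points, so containing $z$ with $\|z+e_d\|$ small does not force $-e_d\in B(c,\rho)$ --- a ball can pass arbitrarily close to $-e_d$ without containing it. You correctly flag this step as ``the main obstacle,'' but it is precisely the content of the lemma, and it is missing.

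The paper's proof fills this gap by a direct estimate rather than a contradiction. It first converts \emph{both} hypotheses into the single bound $\rho\leq\min\{\ell+\|c\|-1,\ \|c+e_d\|\}$ (the first term from avoidance of $B(0,1-\ell)$, the second from $-e_d\notin B(c,\rho)$), and then proves, via a two-case analysis according to whether $-2c\cdot e_d$ exceeds the critical value $s=\ell(2-\ell)+2(1-\ell)\|c\|$ (equivalently, which of the two terms in the minimum is smaller), the uniform lower bound $\|c+(1-\ell)e_d\|^2-\rho^2\geq\ell(1-\ell)(\ell+2\delta)$ with $\delta=\|c\|-1$, and from it $\|c+(1-\ell)e_d\|-\rho\geq\alpha_\ell\ell$. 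Disjointness is then immediate from the triangle inequality, since the distance between the centers exceeds the sum of the radii. Your identity $(1+\alpha_\ell)^2=2-\ell$ is indeed the reason the numbers work out (it appears in the paper's chain as $\ell\sqrt{2-\ell}-\ell=\alpha_\ell\ell$), but the essential interplay between the two constraints on $\rho$ and the case split on $c\cdot e_d$ has no counterpart in your proposal, so the argument as written cannot be completed along the route you sketch.
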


\begin{figure}[h]
    \centering
    \includegraphics[width=0.5\linewidth]{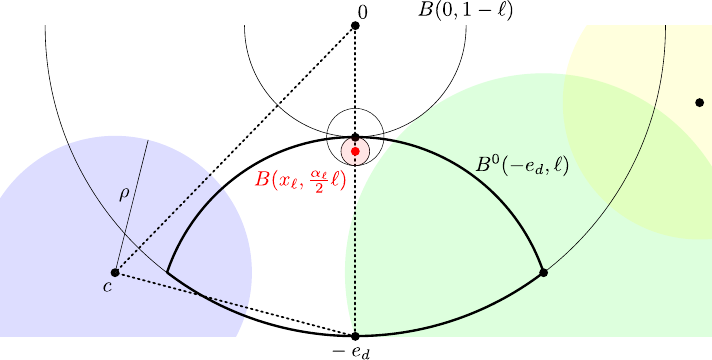}
    \caption{Illustration of Lemma \ref{lemma_empty_ball}. In green the worst setup possible for $B(c,\rho)$, with $\|c\|=1$ and $\rho=\ell=\|c+e_d\|$.}
    \label{fig_empty_ball}
\end{figure}

\begin{proof}
    Observe that since $(1-\ell)\frac{c}{\|c\|}\in\partial B(0, 1-\ell)$ and $B(c,\rho)\cap B(0,1-\ell)=\emptyset$, one must has 
        \[\ell+\|c\|-1=\left\|(1-\ell)\frac{c}{\|c\|}-c\right\|\geq\rho.\]
    Since $-e_d\notin B(c,\rho)$, it implies
        \begin{equation}
            \label{eq_rho}
            \rho\leq\min\{\ell+\|c\|-1,\|c+e_d\|\}.
        \end{equation}
    In a first step, one wants to lower bound $\|c+(1-\ell)e_d\|^2-\rho^2$. Using the polarization identity, one has
        \begin{equation}
            \label{eq_center+}
            \|c+(1-\ell)e_d\|^2=(1-\ell)^2+\|c\|^2+2(1-\ell)c\cdot e_d,
        \end{equation}
    and similarly
        \begin{equation}
            \label{eq_center}
            \|c+e_d\|^2=1+\|c\|^2+2c\cdot e_d.
        \end{equation}
    Using \eqref{eq_center}, one gets
        \begin{equation*}
            \begin{split}
                \|c+e_d\|\leq \ell+\|c\|-1\iff-2c\cdot e_d&\geq 1+\|c\|^2-(\ell+\|c\|-1)^2\\
                &=1-(1-\ell)^2+2(1-\ell)\|c\|\\
                &=\ell(2-\ell)+2(1-\ell)\|c\|\\
                &\eqqcolon s.
            \end{split}
        \end{equation*}
    Let us now use the critical value $s$ to separate two cases. If $-2c\cdot e_d\geq s$, using \eqref{eq_center+} and \eqref{eq_center}, one obtains
        \begin{equation*}
            \begin{split}
                \|c+(1-\ell)e_d\|^2-\rho^2&\geq\|c+(1-\ell)e_d\|^2-\min\{\ell+\|c\|-1,\|c+e_d\|\}^2\\
                &=\|c+(1-\ell)e_d\|^2-\|c+e_d\|^2\\
                &=(1-\ell)^2-1-2\ell c\cdot e_d\\
                &\geq \ell(s+\ell-2)\\
                &=\ell(1-\ell)(\ell+2\|c\|-2),
            \end{split}
        \end{equation*}
    where the first inequality uses \eqref{eq_rho}. In the other case, that is $-2c\cdot e_d<s$, one gets similarly
        \begin{equation*}
            \begin{split}
                \|c+(1-\ell)e_d\|^2-\rho^2&\geq\|c+(1-\ell)e_d\|^2-(\ell+\|c\|-1)^2\\
                &=(1-\ell)^2+\|c\|^2-(\ell+\|c\|-1)^2+2(1-\ell)c\cdot e_d\\
                &\geq(1-\ell)(2\|c\|-s)\\
                &=\ell(1-\ell)(\ell+2\|c\|-2).
            \end{split}
        \end{equation*}
    Summing up, in both cases,
        \begin{equation}
            \label{eq_drho2}
            \|c+(1-\ell)e_d\|^2-\rho^2\geq \ell(1-\ell)(\ell+2\delta)\quad\text{where}\quad\delta\coloneqq\|c\|-1\geq0.
        \end{equation}
    Now, using \eqref{eq_drho2}, one can write
        \begin{equation}
            \label{eq_drho}
            \begin{split}
                \|c+(1-\ell)e_d\|-\rho&\geq\sqrt{\rho^2+\ell(1-\ell)(\ell+2\delta)}-\rho\\
                &\geq\sqrt{(\ell+\delta)^2+\ell(1-\ell)(\ell+2\delta)}-\ell-\delta\\
                &=\ell\sqrt{\left(1+\frac{\delta}{\ell}\right)^2+(1-\ell)\left(1+\frac{2\delta}{\ell}\right)}-\ell-\delta\\
                &=\ell\sqrt{2-\ell+\frac{2\delta}{\ell}(2-\ell)+\frac{\delta^2}{\ell^2}}-\ell-\delta\\
                &\geq\ell\sqrt{2-\ell+2\frac{\delta}{\ell}\sqrt{2-\ell}+\frac{\delta^2}{\ell^2}}-\ell-\delta\\
                &=\ell\sqrt{\left(\sqrt{2-\ell}+\frac{\delta}{\ell}\right)^2}-\ell-\delta\\
                &=\ell\sqrt{2-\ell}-\ell\\
                &=\alpha_\ell\ell,
            \end{split}
        \end{equation}
    where the second inequality follows from concavity of $t\mapsto\sqrt t$ since $\rho\leq \ell+\|c\|-1=\ell+\delta$, and the last one follows from the fact that $t\geq\sqrt{t}$ for all $t\geq 1$, and $2-\ell\geq 1$. Now let $z\in B(-[1-\ell]e_d,\alpha_\ell \ell)$. Using the triangle inequality and \eqref{eq_drho}, one gets
        \[\|z-c\|\geq\|c+(1-\ell)e_d\|-\|z+(1-\ell)e_d\|\geq(\alpha_\ell\ell+\rho)-\alpha_\ell\ell=\rho,\]
    hence $z\notin B(c, \rho)$. This shows 
        \begin{equation}
            \label{eq_emptyball}
            B(-[1-\ell]e_d,\alpha_\ell \ell)\cap B(c,\rho)=\emptyset.
        \end{equation}
    Let us conclude the proof by checking the last part of the result. Let $z\in B(x_\ell,\frac{\alpha_\ell}{2}\ell)$. First, observe that
        \[\|z\|\leq\|z-x_\ell\|+\|x_\ell\|<\frac{\alpha_\ell}{2}\ell+\left(1-\ell+\frac{\alpha_\ell}{2}\ell\right)=1-\ell(1-\alpha_\ell)\leq 1,\]
    where the last inequality uses that $\alpha_\ell\leq 1$. This shows that $z\in B(0,1)$. One also has
        \[\|z+e_d\|\leq\|z-x_\ell\|+\|x_\ell+e_d\|<\frac{\alpha_\ell}{2}\ell+\ell\left(1-\frac{\alpha_\ell}{2}\right)=\ell,\]
    implying $z\in B(-e_d,\ell)$. This shows $z\in B^0(-e_d,\ell)$. Additionally, one can write
        \[\|z+(1-\ell)e_d\|\leq\|z-x_\ell\|+\|x_\ell+(1-\ell)e_d\|<\frac{\alpha_\ell}{2}\ell+\frac{\alpha_\ell}{2}\ell=\alpha_\ell\ell,\]
    implying $z\in B(-(1-\ell)e_d,\alpha_\ell\ell)$. Finally, \eqref{eq_emptyball} gives that $z\notin B(c,\rho)$. The proof is complete.
\end{proof}

\begin{Remark}
    The factor $\alpha_\ell$ goes to $\sqrt2-1$ as $\ell\to0$, that is when the curvature of $\partial B(0, 1)$ become negligible. This constant is exactly the one obtained for the DSF in \cite[Lemma 3.6]{coalVSdim}. Note that Lemma~\ref{lemma_empty_ball} is only asymptotically sharp.
\end{Remark}

We now use the previous lemma to derive the following proposition.

\begin{Proposition}
    \label{prop_upper_control_proba}
    There exists $c_{\mathrm r}=c_{\mathrm r}(d)>0$ such that for all $n\geq 0$ and $h\in\R_+$,
        \[\1_{L_n\leq\frac{1}{2}R_n}\P(r_n-r_{n+1}>h~|~\F_n)\leq\exp({-c_{\mathrm r}\max\{h, L\}^d})\]
\end{Proposition}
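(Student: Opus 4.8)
The plan is to reduce, via \eqref{eq_proba_overshoot}, to the purely deterministic volume estimate
\[|B^0(\pi_n, L_n+h)\setminus H_n|\;\geq\; c_{\mathrm r}\,\max\{h,L_n\}^d\qquad\text{on the event }\{L_n\leq\tfrac12 R_n\},\]
for some $c_{\mathrm r}=c_{\mathrm r}(d)>0$; plugging this into \eqref{eq_proba_overshoot} then finishes the argument. A few reductions come first: if $L_n>\tfrac12R_n$ the indicator is $0$; if $R_n=0$ then $L_n=0$ and $r_n=r_{n+1}=0$, so $\P(r_n-r_{n+1}>h\mid\F_n)=0$; and if $h\geq r_n=R_n-L_n$ then $r_n-r_{n+1}>h$ would force $r_{n+1}<r_n-h\leq0$, which is impossible, so again the conditional probability is $0$. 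Hence I may assume $R_n>0$, $L_n\leq\tfrac12R_n$ and $0\leq h<R_n-L_n$, and after dividing all points by $R_n$ and rotating so that $\pi_n=-e_d$ (both sides of the target inequality scale by $R_n^d$), it suffices to treat the normalized case $R_n=1$, $\pi_n=-e_d$, with $\ell\coloneqq L_n\in[0,\tfrac12]$ (so $\ell=1-r_n$), $0\leq h<1-\ell$, and $r_n=1-\ell$.

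The structural input is that, inside $B(0,1)$, not merely $H_n$ but the whole union of balls $\bigcup_{k<n}B(\pi_k,\rho_k)$, with $\rho_k\coloneqq\|\pi_{k+1}-\pi_k\|$, lies in the shell $\{\|x\|\geq1-\ell\}$: any $x\in B(\pi_k,\rho_k)\cap B(0,1)$ has $\|x\|<1\leq R_k$, so $x\in B^0(\pi_k,\rho_k)\subseteq H_n$ and hence $\|x\|\geq r_n=1-\ell$ by the definition of $r_n$ (this also forces $\rho_k<R_k$). Now I apply Lemma~\ref{lemma_empty_ball} to each ball $B(\pi_k,\rho_k)$ separately: its center has norm $R_k\geq1$; its intersection with $B(0,1-\ell)$, which lies in $B(0,R_k)$, is contained in $B^0(\pi_k,\rho_k)\subseteq\{\|x\|\geq1-\ell\}$ and is therefore empty; and it misses $-e_d=\pi_n$, since $\pi_n\in\N$ whereas $\N\cap B^0(\pi_k,\rho_k)=\emptyset$ and $\|\pi_n\|=1<R_k$. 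The lemma then yields $B\bigl(x_\ell,\tfrac{\alpha_\ell}{2}\ell\bigr)\subseteq B^0(-e_d,\ell)\setminus B(\pi_k,\rho_k)$, and — this is the key point — the pair $(x_\ell,\alpha_\ell)$ does not depend on $k$; intersecting over $k<n$ gives $B\bigl(x_\ell,\tfrac{\alpha_\ell}{2}\ell\bigr)\subseteq B^0(-e_d,\ell)\setminus\bigcup_{k<n}B(\pi_k,\rho_k)\subseteq B^0(-e_d,\ell+h)\setminus H_n$. Since $\ell\mapsto\alpha_\ell=\sqrt{2-\ell}-1$ is decreasing, $\alpha_\ell\geq\alpha_{1/2}=\sqrt{3/2}-1>0$ on $[0,\tfrac12]$, whence $|B^0(-e_d,\ell+h)\setminus H_n|\geq|B(0,1)|\bigl(\tfrac{\alpha_{1/2}}{2}\bigr)^d\ell^d\eqqcolon c_1\ell^d$.

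For the complementary bound $\gtrsim h^d$, I place the ball $B\bigl(p,\tfrac h4\bigr)$ at $p\coloneqq-(1-\ell-\tfrac h2)e_d$: since $0\leq h<1-\ell$ one has $\|p\|=1-\ell-\tfrac h2\in(\tfrac{1-\ell}{2},1-\ell)$, and a one-line triangle-inequality check ($\|z+e_d\|\leq\tfrac h4+(\ell+\tfrac h2)\leq\ell+h$ and $\|z\|\leq\tfrac h4+\|p\|=1-\ell-\tfrac h4<1-\ell$ for $z\in B(p,\tfrac h4)$) gives $B\bigl(p,\tfrac h4\bigr)\subseteq B(-e_d,\ell+h)\cap B(0,1-\ell)$, which by the structural input is contained in $B^0(-e_d,\ell+h)\setminus H_n$; hence $|B^0(-e_d,\ell+h)\setminus H_n|\geq|B(0,1)|\bigl(\tfrac h4\bigr)^d\eqqcolon c_2h^d$. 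Combining the two estimates, $|B^0(-e_d,\ell+h)\setminus H_n|\geq\max\{c_1\ell^d,c_2h^d\}\geq\min\{c_1,c_2\}\max\{\ell,h\}^d$, and undoing the normalization proves the target volume estimate with $c_{\mathrm r}\coloneqq\min\{c_1,c_2\}$, depending only on $d$; \eqref{eq_proba_overshoot} then concludes.

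The single genuinely delicate ingredient is the simultaneous application of Lemma~\ref{lemma_empty_ball} to all the history balls at once: one cannot simply subtract the full shell $\{1-\ell\leq\|x\|<1\}$ from $B^0(-e_d,\ell)$, because $B(-e_d,\ell)\cap B(0,1-\ell)$ collapses to a single point exactly at $\ell=\tfrac12$ (external tangency). What makes the $\ell^d$-bound work is precisely that each history lens is contained in a ball and that the free ball produced by the lemma is common to all of these obstacles; this is also where the hypothesis $L_n\leq\tfrac12R_n$ is used, to keep $\alpha_\ell$ — and hence $c_1$ — bounded away from $0$. The $h^d$-bound is routine once $p$ is chosen correctly, and handling the two regimes $h\lesssim L_n$ and $h\gtrsim L_n$ together is exactly what produces the $\max\{h,L_n\}$ on the right-hand side.
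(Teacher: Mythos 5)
Your proof is correct and follows essentially the same route as the paper: reduce via \eqref{eq_proba_overshoot} to a lower bound on $|B^0(\pi_n,L_n+h)\setminus H_n|$, get the $L_n^d$ part from Lemma~\ref{lemma_empty_ball} (whose excluded ball is common to all history obstacles, exactly as the paper uses it on the union of lenses), and get the $h^d$ part from a small ball placed just inside radius $r_n$. Your treatment is in fact slightly more explicit than the paper's in verifying the lemma's hypotheses for each history ball and in handling the degenerate cases $R_n=0$ and $h\geq r_n$, but the argument and constants are of the same nature.
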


\begin{proof}
    Let $n\geq 0$ and $h\in\R_+$. Let us work conditionally on $\F_n$ with $R_n>0$. First, observe that by isotropy of the model, one can assume without loss of generality that $\pi_n=-R_ne_d$. Then, setting
        \[\ell\coloneqq\frac{L_n}{R_n}\in[0,1],\]
    since by definition, $H_n$ consist of an union of lenses $B^0(R_nc,R_n\rho)$ with $\|c\|\geq1$ and $B^0(c,\rho)\cap B(0, \ell)=\emptyset$, one can apply Lemma \ref{lemma_empty_ball} up to a scaling by $\frac{1}{R_n}$ to get that
        \[B\left(R_nx_\ell,\frac{\alpha_\ell}{2}L_n\right)\subset B^0(\pi_n,L_n)\setminus H_n.\]
    Therefore, 
        \[|B^0(\pi_n,L_n+h)\setminus H_n|\geq\left|B\left(R_nx_\ell,\frac{\alpha_\ell}{2}L_n\right)\right|=\left(\frac{\alpha_\ell}{2}L_n\right)^d|B(0,1)|.\]
    Injecting in \eqref{eq_proba_overshoot}, it comes
        \begin{equation}
            \label{eq_proba_overshoot_Ln}
            \begin{split}
                \1_{L_n\leq\frac{1}{2}R_n}\P(r_n-r_{n+1}>h~|~\F_n)&\leq \1_{L_n\leq\frac{1}{2}R_n}\exp\left[-\left(\frac{\alpha_\ell}{2}L_n\right)^d|B(0,1)|\right]\\&\leq \exp\left[-\left(\frac{\alpha_{1/2}}{2}\right)^d|B(0,1)|(L_n)^d\right],
            \end{split}
        \end{equation}
    where the last inequality uses that $\alpha_\ell\geq\alpha_{1/2}$ as $\ell\leq\frac{1}{2}$ when $L_n\leq\frac{1}{2}R_n$ and $t\mapsto \alpha_t$ is non-increasing. It remains to get a bound with respect to $h$. Since $r_n-r_{n+1}\leq r_n$, one can assume $h\leq r_n$. Set
        \[c\coloneqq-\left(r_n-\frac{h}{2}\right)e_d.\]
    If $z\in B(c,\frac{h}{2})$, then
        \[\|z\|\leq\|z-c\|+\|c\|<\frac{h}{2}+\left(r_n-\frac{h}{2}\right)=r_n,\]
    implying $z\in B(0,r_n)$ and in particular $z\in B(0,R_n)\setminus H_n$ by definition of $r_n$. Furthermore,
        \[\|z-\pi_n\|=\|z+R_ne_d\|\leq\|z-c\|+\|c+R_ne_d\|\leq\frac{h}{2}+\left(L_n+\frac{h}{2}\right)=L_n+h,\]
    which gives $z\in B(\pi_n, L_n+h)$. This shows
        \[B\left(c,\frac{h}{2}\right)\subset B^0(\pi_n,L_n+h)\setminus H_n,\]
    and thus
        \[|B^0(\pi_n,L_n+h)\setminus H_n|\geq\left(\frac{h}{2}\right)^d.\]
    Injecting in \eqref{eq_proba_overshoot} gives
        \begin{equation}
            \label{eq_proba_overshoot_h}
            \P(r_n-r_{n+1}>h~|~\F_n)\leq e^{(h/2)^d}.
        \end{equation}
    Finally, putting \eqref{eq_proba_overshoot_Ln} and \eqref{eq_proba_overshoot_h} together gives the result setting $c_\mathrm r\coloneq\min\{1, (\frac{\alpha_{1/2}}2)^d|B(0,1)|\}$.
\end{proof}

We conclude the subsection by establishing the following key moment bound from the previous proposition.

\begin{Proposition}
    \label{prop_upper_control_moment}
    There exists a non-increasing function $G:\R_+\to\R$ with $\lim_{L\to\infty} G(L)=1$ such that for all $n\geq 0$,
        \[\1_{L_n\leq\frac{1}{2}R_n}\E[e^{2(r_n-r_{n+1})}~|~\F_n]\leq G(L_n).\]
\end{Proposition}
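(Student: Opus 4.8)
The plan is to deduce this from the tail estimate of Proposition~\ref{prop_upper_control_proba} by a layer-cake computation. First note that $r_n-r_{n+1}\geq 0$: by construction $H_{n+1}=H_n\cup B^0(\pi_n,\|\pi_{n+1}-\pi_n\|)\supseteq H_n$ and $R_{n+1}=\|\pi_{n+1}\|<\|\pi_n\|=R_n$, whence $r_{n+1}\leq r_n$. For any nonnegative random variable $X$, writing $e^{2X}=1+\int_0^\infty 2e^{2h}\1_{\{h<X\}}\,dh$ and applying the conditional Tonelli theorem gives
\[\E\big[e^{2X}\mid\F_n\big]=1+2\int_0^\infty e^{2h}\,\P(X>h\mid\F_n)\,dh.\]
Specializing to $X=r_n-r_{n+1}$ and using that $\{L_n\leq\frac{1}{2}R_n\}$ is $\F_n$-measurable, Proposition~\ref{prop_upper_control_proba} yields, on this event,
\[\E\big[e^{2(r_n-r_{n+1})}\mid\F_n\big]\leq 1+2\int_0^\infty e^{2h}\exp\!\big(-c_{\mathrm r}\max\{h,L_n\}^d\big)\,dh.\]

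Next I would split the integral at $h=L_n$. Over $[0,L_n]$ one has $\max\{h,L_n\}=L_n$, so this part is at most $e^{-c_{\mathrm r}L_n^d}\int_0^{L_n}2e^{2h}\,dh=(e^{2L_n}-1)\,e^{-c_{\mathrm r}L_n^d}$; over $[L_n,\infty)$ one has $\max\{h,L_n\}=h$, so this part equals $2\int_{L_n}^\infty e^{2h-c_{\mathrm r}h^d}\,dh$. Because $d\geq 2$, the exponent $2h-c_{\mathrm r}h^d$ tends to $-\infty$, so $\int_0^\infty e^{2h-c_{\mathrm r}h^d}\,dh<\infty$; in particular the tail integral is finite and tends to $0$ as $L_n\to\infty$. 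Setting
\[g(L)\coloneqq 1+(e^{2L}-1)\,e^{-c_{\mathrm r}L^d}+2\int_L^\infty e^{2h-c_{\mathrm r}h^d}\,dh,\qquad L\in\R_+,\]
we have shown $\1_{L_n\leq\frac{1}{2}R_n}\,\E[e^{2(r_n-r_{n+1})}\mid\F_n]\leq g(L_n)$. The function $g$ is continuous on $\R_+$, satisfies $g\geq 1$, and $g(L)\to 1$ as $L\to\infty$ since each of the two correction terms vanishes.

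It remains to upgrade $g$ to a non-increasing function: $g$ itself need not be monotone, because $(e^{2L}-1)e^{-c_{\mathrm r}L^d}$ first increases near $L=0$. I would therefore take $G(L)\coloneqq\sup_{L'\geq L}g(L')$, which is well defined and finite since $g$ is bounded (being continuous on $[0,\infty)$ with a finite limit at infinity), is non-increasing by construction, dominates $g$, and satisfies $\lim_{L\to\infty}G(L)=1$ (the upper bound from $g(L')\to 1$, the lower bound from $G\geq g$). Then $\1_{L_n\leq\frac{1}{2}R_n}\,\E[e^{2(r_n-r_{n+1})}\mid\F_n]\leq G(L_n)$, as required. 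There is no genuine obstacle in this proposition; the only points requiring a modicum of care are the nonnegativity of $r_n-r_{n+1}$, the convergence of $\int_0^\infty e^{2h-c_{\mathrm r}h^d}\,dh$ (which is exactly where $d\geq 2$ is used), and the passage to a monotone envelope.
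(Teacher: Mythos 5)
Your proof is correct and follows essentially the same layer-cake route as the paper, which simply defines $G(L)\coloneqq 1+2\int_0^\infty e^{2t-c_{\mathrm r}\max\{t,L\}^d}\,\mathrm dt$ and invokes dominated convergence for the limit. Your monotone-envelope step is harmless but unnecessary: written in this form the integrand is pointwise non-increasing in $L$, so your $g$ (which equals this $G$) is already non-increasing.
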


\begin{proof}
    Set
        \[G:\R_+\to\R,~L\mapsto1+2\int_0^\infty e^{2t-c_{\mathrm r}\max\{t, L\}^d}\mathrm dt.\]
    Since $G(0)<\infty$ and the integrand in the definition of $G(L)$ decreases to $0$ point-wise as $L\to\infty$, the dominated convergence theorem gives that $G(L)$ decreases to $1$. Now, observe that using Fubini's theorem, as $r_n-r_{n+1}$ is a positive random variable, one can write
        \[\E[e^{2(r_n-r_{n+1})}~|~\F_n]=1+2\int_0^\infty e^{2t}\P(r_n-r_{n+1}>t~|~\F_n)\mathrm dt.\]
    Finally, using Proposition \ref{prop_upper_control_proba}, it comes
        \[\1_{L_n\leq\frac{1}{2}R_n}\E[e^{2(r_n-r_{n+1})}~|~\F_n]\leq1+2\int_0^\infty e^{2t}e^{-c_{\mathrm r}\max\{t,L_n\}^d}\mathrm dt=G(L_n).\]
    The proof is complete.
\end{proof}

\subsection{Lower control on the growth of $(-R_n)_{n\geq 0}$}

In this subsection, we focus on establishing a lower bound on the growth of the sequence $(-R_n)_{n\geq 0}$. To do so, we need to control the \emph{progress} toward the origin made at every step $n\geq0$, that is, the difference $R_n-R_{n+1}$. Our approach relies on a stochastic domination technique introduced in \cite{coalVSdim} to handle an analogous problem for the DSF. To apply it in our setting, we will need to control how far the RST deviates from the idealized situation where no curvature is visible, that is, infinitely far from the origin. To that end, we introduce the following lemma. An illustration is provided in Figure \ref{fig_flatness_default}.

\begin{Lemma}
    \label{lemma_flatness_default}
    Let $c\in\R^d$, $\rho\in\R_+$ and $\ell\in[0, \frac{1}{2}]$ be such that
        \[\|c\|\geq 1,\quad B(c,\rho)\cap B(0,1-\ell)=\emptyset\quad\text{and}\quad \overline B(c,\rho)\cap \overline B(-e_d,\ell)\neq\emptyset.\]
    Then,
        \[1+c\cdot e_d\leq2\ell^2.\]
\end{Lemma}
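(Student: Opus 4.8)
The plan is to reduce the inequality $1 + c\cdot e_d \le 2\ell^2$ to a quadratic bound on $\|c+e_d\|$ via the polarization identity, and then to extract that bound directly from the two geometric hypotheses, each of which is just a statement about distances between centers.

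First I would introduce $\delta \coloneqq \|c\|-1 \ge 0$, which measures how far $c$ is from the unit sphere. Since $\|c\|\ge 1 > 1-\ell$ (using $\ell\le\tfrac12<1$), the disjointness $B(c,\rho)\cap B(0,1-\ell)=\emptyset$ is equivalent to $\|c\| \ge \rho + (1-\ell)$, hence $\rho \le \delta + \ell$. Likewise, the non-empty intersection of the closed balls $\overline B(c,\rho)$ and $\overline B(-e_d,\ell)$ is equivalent to $\|c-(-e_d)\| \le \rho+\ell$, and combining with the previous bound gives $\|c+e_d\| \le \rho+\ell \le \delta + 2\ell$.

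Next I would rewrite the target quantity in terms of $\|c+e_d\|$ and $\delta$. By polarization, $\|c+e_d\|^2 = \|c\|^2 + 1 + 2\,c\cdot e_d = (1+\delta)^2 + 1 + 2\,c\cdot e_d$, so $1 + c\cdot e_d = \tfrac12\|c+e_d\|^2 - \delta - \tfrac{\delta^2}{2}$. Substituting $\|c+e_d\|^2 \le (\delta+2\ell)^2 = \delta^2 + 4\ell\delta + 4\ell^2$ yields
\[
1 + c\cdot e_d \;\le\; 2\ell\delta + 2\ell^2 - \delta \;=\; (2\ell-1)\delta + 2\ell^2.
\]
Since $\ell\le\tfrac12$ and $\delta\ge 0$, the term $(2\ell-1)\delta$ is non-positive, and the claimed bound $1+c\cdot e_d\le 2\ell^2$ follows.

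There is no real obstacle here; the computation is short and elementary. The only thing to spot is that $1+c\cdot e_d$ should be handled through $\|c+e_d\|^2$ together with $\delta=\|c\|-1$, after which both hypotheses translate immediately into the linear constraints $\rho\le\delta+\ell$ and $\|c+e_d\|\le\rho+\ell$; the $\delta$-dependence then cancels with the right sign precisely because $\ell\le\tfrac12$, which explains why that range is assumed in the statement.
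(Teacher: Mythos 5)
Your proof is correct and follows essentially the same route as the paper's: both arguments extract $\rho\le\ell+\|c\|-1$ from the disjointness with $B(0,1-\ell)$, deduce $\|c+e_d\|\le\rho+\ell\le 2\ell+\|c\|-1$ from the nonempty intersection, and then conclude via the polarization identity, with the sign of $(2\ell-1)(\|c\|-1)$ handled exactly as in the paper. Your introduction of $\delta=\|c\|-1$ is only a cosmetic reorganization of the same computation.
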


\begin{figure}[h]
    \centering
    \includegraphics[width=0.6\linewidth]{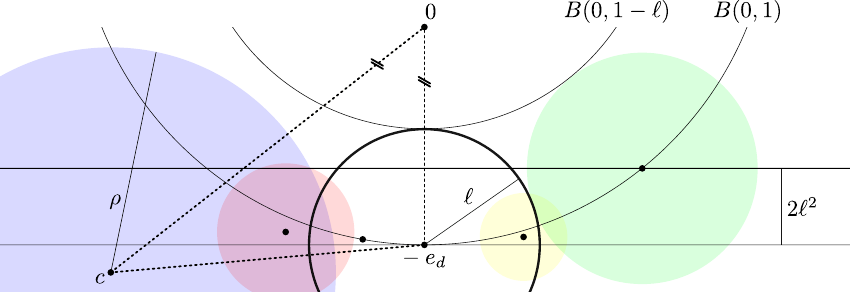}
    \caption{Illustration of Lemma \ref{lemma_flatness_default}. In translucent colors different possible settings for the ball $B(c, r)$ satisfying the required assumptions. The thickest line represents the highest level possible for $c$ on the $e_d$ coordinate, given as the result of the lemma.}
    \label{fig_flatness_default}
\end{figure}

\begin{proof}
    Observe that since $(1-\ell)\frac{c}{\|c\|}\in\partial B(0, 1-\ell)$ and $B(c,\rho)\cap B(0,1-\ell)=\emptyset$, one must have
        \begin{equation}
            \label{eq_rho_flat}
            \ell+\|c\|-1=\left\|(1-\ell)\frac{c}{\|c\|}-c\right\|\geq\rho.
        \end{equation}
    Since $\overline B(-e_d,\ell)\cap \overline B(c,\rho)\neq\emptyset$, there exists $x\in \overline B(-e_d,\ell)\cap \overline B(c,\rho)$ and it follows that
        \begin{equation}
            \label{eq_ced}
            \|c+e_d\|\leq\|c-x\|+\|x+e_d\|\leq\rho+\ell\leq2\ell+\|c\|-1,
        \end{equation}
    where the last inequality uses \eqref{eq_rho_flat}. Now, using the polarization identity, one gets
        \begin{equation*}
            \begin{split}
                -2e_d\cdot c&=1+\|c\|^2-\|c+e_d\|^2\\
                &\geq1+\|c\|^2-(2\ell+\|c\|-1)^2\\
                &=1+\|c\|^2-(\|c\|-1)^2-4\ell^2-4\ell(\|c\|-1)\\
                &=2\|c\|-4\ell(\|c\|-1)-4\ell^2\\
                &=2+2(1-2\ell)(\|c\|-1)-4\ell^2\\
                &\geq2-4\ell^2,
            \end{split}
        \end{equation*}
    where the first inequality follows from \eqref{eq_ced} and the last line uses that $\|c\|\geq 1$ and $\ell\leq\frac{1}{2}$. It follows that $1+c\cdot e_d\leq2\ell^2$. The proof is complete.
\end{proof}

\begin{Remark}
    Lemma \ref{lemma_flatness_default} is sharp. In the case $\|c\|=1$, $\|c+e_d\|=2\ell$ and $\rho=\ell$, illustrated in green on Figure \ref{fig_flatness_default}, all the inequalities of the proof are equalities, leading to $1+c\cdot e_d=2\ell^2$.
\end{Remark}

We will also need the following lemma, which establishes a relation between \emph{radial} and \emph{vertical} progress.

\begin{Lemma}
    \label{lemma_radial_upward_progress}
    Fix $\rho\in[0,1]$ and $h\in[\frac{1}{2},1]$. Let $x\in B^0(-e_d, \rho)$ be such that
        \[1+x\cdot e_d\geq h\rho,\]
    then
        \[1-\|x\|\geq\left(h-\frac{1}{2}\right)\rho.\]
\end{Lemma}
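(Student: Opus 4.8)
The goal is to convert a lower bound on the vertical coordinate $1+x\cdot e_d$ into a lower bound on the radial progress $1-\|x\|$, for a point $x$ lying in the lens $B^0(-e_d,\rho)$. The natural first step is to parametrize: write $x = -e_d + v$ where $\|v\| < \rho$, and decompose $v = a e_d + w$ with $w \perp e_d$. The hypothesis $1 + x\cdot e_d \geq h\rho$ becomes $a \geq h\rho$, and the constraint $x\in B(0,1)$, i.e. $\|x\|<1$, gives $\|(a-1)e_d + w\|^2 = (1-a)^2 + \|w\|^2 < 1$, hence $\|w\|^2 < 2a - a^2 \leq 2a$. Also $\|v\|^2 = a^2 + \|w\|^2 < \rho^2$ gives $\|w\|^2 < \rho^2 - a^2$.

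Next I would estimate $\|x\|^2 = (1-a)^2 + \|w\|^2 = 1 - 2a + a^2 + \|w\|^2$. Using $\|w\|^2 < \rho^2 - a^2$, we get $\|x\|^2 < 1 - 2a + \rho^2$, so $1 - \|x\|^2 > 2a - \rho^2 \geq 2h\rho - \rho^2 = \rho(2h - \rho)$. Then, since $1 - \|x\| = \frac{1-\|x\|^2}{1+\|x\|} \geq \frac{1-\|x\|^2}{2}$ (as $\|x\| < 1$), we obtain $1 - \|x\| > \frac{\rho(2h-\rho)}{2} = \rho\left(h - \frac{\rho}{2}\right)$. Finally, since $\rho \leq 1$, we have $h - \frac{\rho}{2} \geq h - \frac12$, giving $1 - \|x\| \geq \left(h - \frac12\right)\rho$ as claimed. (One should double-check the boundary/degenerate cases $\rho = 0$ or $a = \rho$, where $w = 0$ and the inequalities become equalities or trivialities; the assumption $h \in [\frac12,1]$ ensures $h\rho \leq \rho$ so the hypothesis is consistent.)

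The argument is entirely elementary and I do not expect a genuine obstacle; the only mild subtlety is making sure the chain of inequalities uses the constraints in the right order — specifically that one should bound $\|w\|^2$ via the lens constraint $\|v\| < \rho$ rather than the ball constraint $\|x\| < 1$, since it is the former that carries the factor $\rho$ needed in the conclusion. The role of the condition $h \geq \frac12$ is precisely to absorb the discrepancy between $\frac{\rho}{2}$ and $\frac12$ after using $\rho \leq 1$, and the role of $h \leq 1$ (together with $\rho \leq 1$) is just to keep all quantities in the regime where the hypothesis is non-vacuous.
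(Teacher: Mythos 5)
Your proof is correct and is essentially the paper's argument in coordinate form: your bound $\|x\|^2 < 1-2a+\rho^2$ (from $a^2+\|w\|^2<\rho^2$) is exactly the paper's polarization step $\rho^2>\|x+e_d\|^2=\|x\|^2-1+2(1+x\cdot e_d)$, and your factorization $1-\|x\|=\tfrac{1-\|x\|^2}{1+\|x\|}\geq\tfrac{1-\|x\|^2}{2}$ plays the role of the paper's mean-value bound $\|x\|^2-1\geq-2(1-\|x\|)$. No gap.
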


\begin{proof}
    One can write
        \begin{equation}
            \label{eq_x_polarization}
            \begin{split}
                \rho^2>\|x+e_d\|^2&=\|x\|^2+1+2x\cdot e_d\\
                &=\|x\|^2-1+2(1+x\cdot e_d)\\
                &\geq\|x\|^2-1+2h\rho\\
                &\geq-2(1-\|x\|)+2h\rho,
            \end{split}
        \end{equation}
    where the first inequality uses $x\in B(-e_d,\rho)$, the middle line follows from \eqref{eq_Yed}, and the last one is obtained via the mean value theorem applied to the function $z\mapsto z^2$ with $\|x\|\leq 1$ as $x\in B(0,1)$. It follows from \eqref{eq_x_polarization} that
        \[1-\|x\|\geq\left(h-\frac{1}{2}\rho\right)\rho\geq\left(h-\frac{1}{2}\right)\rho,\]
    where the last inequality uses that $\rho\in[0,1]$ and $h\geq\frac{1}{2}$. The proof is complete.
\end{proof}

Let $\mathcal H_0$ denote the collection of \emph{normalized history sets}, namely all subsets of $\mathbb R^d$ that can be expressed as a union of lenses avoiding $-e_d$, whose centers lie at a distance of at least $1$ from the origin. The following proposition builds upon the two previous lemmas and applies the stochastic domination approach of \cite[Section 3.2.1]{coalVSdim} to obtain suitable control on the progress.

\begin{Proposition}
    \label{prop_progress}
    There exist $\delta=\delta(d)>0$, $p=p(d)>0$ and $\lambda=\lambda(d)\in(0,\frac{1}{2})$ such that for all $R\geq L\geq 1$ with $L^{d+1}\leq \lambda R$ and for each normalized history set $H\in\mathcal H_0$ that satisfies $(RH)\cap B(0, R-L)=\emptyset$, we have
        \[\P(R-\|\Psi(-Re_d, \N\setminus RH)\|\geq \delta)\geq p.\]
\end{Proposition}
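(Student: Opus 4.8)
The plan is to work at scale $1$, i.e.\ with the point $-e_d$, the history set $H\in\mathcal H_0$ satisfying $H\cap B(0,1-\ell)=\emptyset$ where $\ell\coloneqq L/R\in[0,\lambda]$, and the rescaled Poisson process of intensity $R^d$ on $B(0,1)\setminus H$; then $R-\|\Psi(-Re_d,\N\setminus RH)\|\geq\delta$ corresponds to $1-\|\Psi(-e_d,\,\cdot\,)\|\geq\delta/R$, and progress toward the origin at scale $1$ translates back after multiplying by $R$. The strategy, following \cite[Section 3.2.1]{coalVSdim}, is to fix a well-chosen ``target'' region $D\subset B^0(-e_d,\tfrac12)\setminus H$ that is \emph{forced to be disjoint from $H$} by the hypotheses, with the property that any point of $\N$ landing in $D$ makes definite vertical (hence radial) progress, and such that $D$ is not shielded by the lens $B^0(-e_d,\rho)$ that a competing closer point would carve out. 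Concretely: condition on there being at least one point of $\N$ in the annular-lens region $A\coloneqq B^0(-e_d,1)\setminus B^0(-e_d,\tfrac12)$ that also lies in the half-space $\{1+x\cdot e_d\geq\tfrac34\|x+e_d\|\}$ (an ``upward cone'' around the axis); call the closest such point to $-e_d$ the candidate, at distance $\rho\in[\tfrac12,1]$. By Lemma~\ref{lemma_radial_upward_progress} with $h=\tfrac34$, this candidate already satisfies $1-\|x\|\geq\tfrac14\rho\geq\tfrac18$, so if it is in fact $\Psi(-e_d)$ we are done with $\delta=\tfrac18 R\cdot\tfrac1R$... — more carefully, we get radial progress $\geq\tfrac18$ at scale $1$, i.e.\ $\geq\tfrac18 R$ after rescaling, which certainly exceeds a fixed $\delta$ once we note we actually only need progress $\geq\delta$ with $\delta$ a constant and $R\geq1$; so set $\delta\coloneqq\tfrac18$.

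The remaining point is that the candidate might \emph{not} be $\Psi(-e_d)$, because a closer point of $\N$ could exist inside $B^0(-e_d,\rho)$. We rule this out with positive probability by a standard two-event decomposition: (i) there is exactly one point of $\N$ in the ``good'' upward region $A'\coloneqq A\cap\{1+x\cdot e_d\geq\tfrac34\|x+e_d\|\}$, and (ii) there are \emph{no} points of $\N$ in $B^0(-e_d,\tfrac12)\setminus H$ together with no points in the rest of $B^0(-e_d,\rho)\setminus(A'\cup H)$ — it suffices instead to ask for no points in $B^0(-e_d,\tfrac12)\setminus H$ and no points in the lens $B^0(-e_d,\rho)$ below the candidate, which we handle by simply requiring $\N\cap(B^0(-e_d,1)\setminus(A'\cup H))=\emptyset$, a stronger but cleaner event. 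On this intersection the unique point in $A'$ is exactly $\Psi(-e_d)$ and it realizes progress $\geq\delta$. Each of these events has probability bounded below by a constant depending only on $d$: the Poisson mass of $A'$ is a fixed positive constant $m_d$ (a cone slice of a fixed lens), so (i) has probability at least $m_d e^{-|A'|}$ conditionally... — we must be careful about the intensity.

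Here is where the hypothesis $L^{d+1}\leq\lambda R$ enters, and it is the main obstacle: we need the regions $A'$ and $B^0(-e_d,1)\setminus H$ to have Lebesgue measure \emph{bounded above by a constant} under the rescaled intensity $R^d$, which is false for fixed sets. The fix, exactly as in \cite{coalVSdim}, is to localize near the boundary $\partial B(0,1)$: one replaces the full lens $B^0(-e_d,1)$ by the thin shell $B(0,1)\setminus B(0,1-\ell')$ for an appropriate $\ell'\asymp\ell$ — of volume $\asymp\ell\asymp L/R$, so of Poisson mass $R^d\cdot(L/R)\cdot(\text{const})$, which is still not $O(1)$ unless $d=1$. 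So instead one must localize both radially \emph{and} laterally: the relevant region making vertical progress and lying outside $H$ (by Lemma~\ref{lemma_empty_ball}, a ball of radius $\asymp\ell$ near $x_\ell$) has volume $\asymp\ell^d$, hence Poisson mass $\asymp R^d\ell^d=L^d$ — still not $O(1)$. The true accounting is: we want a single Poisson point in a ball of radius $\asymp\ell$ (mass $\asymp L^d$) \emph{and} emptiness of the competing lens intersected with $B(0,1)\setminus B(0,1-c\ell)$, whose volume one bounds, using Lemma~\ref{lemma_flatness_default} to control the geometry of the competing lens $B(c,\rho)$ (it forces $1+c\cdot e_d\leq 2\ell^2$, i.e.\ $c$ is nearly antipodal), by $\asymp \ell\cdot(\text{lateral width})^{d-1}\asymp\ell\cdot\ell^{(d-1)/?}$; balancing the lateral scale to make the single-point ball's mass $\Theta(1)$ forces the lateral width $\asymp\ell^{1/d}$, the emptiness region's volume $\asymp\ell\cdot\ell^{(d-1)/d}=\ell^{(2d-1)/d}$, and its Poisson mass $R^d\ell^{(2d-1)/d}=R\cdot L^{(2d-1)/d}\cdot R^{?}$ — the bookkeeping is delicate, and the exponent $d+1$ in $L^{d+1}\leq\lambda R$ is precisely what makes this mass $\leq$ const. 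I would therefore follow \cite[Section 3.2.1]{coalVSdim} line by line for this volume estimate, substituting Lemma~\ref{lemma_flatness_default} (curvature defect $\leq 2\ell^2$) and Lemma~\ref{lemma_empty_ball} (existence of a clean ball of radius $\tfrac{\alpha_\ell}{2}\ell$ outside $H$) for their flat-space analogues, and choosing $\lambda$ small enough that all the constants close; the positive lower bound $p=p(d)$ then comes out as a product of a fixed Poisson-point probability and a fixed Poisson-emptiness probability, uniformly in $R\geq L\geq1$ with $L^{d+1}\leq\lambda R$.
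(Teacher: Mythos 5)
Your outline assembles the right ingredients — a single Poisson point in a target region where Lemma~\ref{lemma_radial_upward_progress} converts vertical into radial progress, emptiness of the competing region, Lemma~\ref{lemma_empty_ball} to guarantee room outside $H$, Lemma~\ref{lemma_flatness_default} to control the competing lenses, and the stochastic domination of \cite[Proposition 3.2]{coalVSdim} — but it has a genuine gap at exactly the point you flag yourself: you never exhibit a target/emptiness pair whose Poisson mass is bounded by a constant uniformly in $R\geq L\geq 1$ with $L^{d+1}\leq\lambda R$ and uniformly over $H\in\mathcal H_0$. Your first construction (one point in an annular lens of radius comparable to $R$, emptiness of the rest of $B^0(-Re_d,R)$) has mass of order $R^d$, as you note; your subsequent attempts to balance a lateral width do not close either (the exponents you write still contain question marks, and e.g.\ a ball of radius of order $L$ has mass of order $L^d$, which is unbounded); and you end by proposing to follow \cite[Section 3.2.1]{coalVSdim} ``line by line'', which is precisely what cannot be done, since that argument is written for the translation-invariant DSF and the adaptation to the radial geometry is the actual content of this proposition. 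So the crucial quantitative step is missing, not merely routine.

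The paper's resolution is different from all the scalings you try and involves no lateral-width balancing. One works at the original scale with unit intensity and chooses the lens radius \emph{adaptively}: since Lemma~\ref{lemma_empty_ball} guarantees that $B^0(-Re_d,L)\setminus RH$ contains a ball of radius $\frac{\alpha_\ell}{2}L\geq\frac{\alpha_{1/2}}{2}$, the intermediate value theorem yields $L_0\in[\frac{\alpha_{1/2}}{2},L]$ with $|B^0(-Re_d,L_0)\setminus RH|=\mu\coloneqq|B(0,\frac{\alpha_{1/2}}{2})|$, a dimensional constant; the emptiness region is then $B^0(-Re_d,L_0)\setminus RH$ itself, of mass exactly $\mu$. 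The target is the upper half-ball $B^+(c_+,\rho_+)$ with $c_+=-R(1-4\ell^2)e_d$ and $\rho_+=L_0-4R\ell^2$, where the upward shift $4R\ell^2$ absorbs the curvature defect quantified by Lemma~\ref{lemma_flatness_default}, ensuring both that this half-ball sits inside $B^0(-Re_d,L_0)$ and that every history lens meeting it is a ball avoiding $c_+$ with center below the level of $c_+$, i.e.\ exactly the hypothesis of \cite[Proposition 3.2]{coalVSdim}. The defect $B^0(-Re_d,L_0)\setminus B^+(c_+,\rho_+)$ has volume at most $4d\frac{L^{d+1}}{R}|B^+(0,1)|\leq 4d\lambda|B^+(0,1)|=\frac{\mu}{2}$ — this is where the exponent $d+1$ and the choice of $\lambda$ actually enter, not through the mass of the target, which is consequently at least $\frac{\mu}{2}$. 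The event of one point in the target minus $RH$ and none in the remainder then has probability at least $\frac{\mu}{2}e^{-\mu}$ uniformly, and on it Lemma~\ref{lemma_radial_upward_progress} with $h=\frac34$ gives radial progress at least $\delta=\frac{\alpha_{1/2}}{8}$, whence $p=p(d)>0$. Without this adaptive choice of $L_0$ (or an equivalent device), your plan does not produce a lower bound independent of $R$, $L$ and $H$.
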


\begin{figure}[h]
    \centering
    \includegraphics[width=0.7\linewidth]{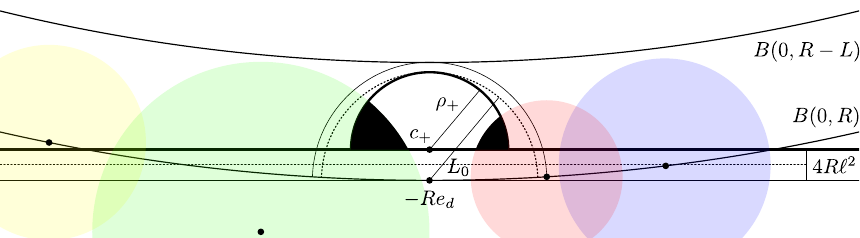}
    \caption{Illustration behind the proof of Proposition \ref{prop_progress}. The translucent balls represent the normalized history $H$ and in bold black is represented the idealized DSF configuration exhibited in the argument.}
    \label{fig_progress}
\end{figure}

\begin{proof}
    The idea is to exhibit with an idealized DSF configuration in the RST setting, as illustrated in Figure~\ref{fig_progress}, in order for existing tools to be applied.
    
    \textbf{step 1.} Let us begin by setting the problem and defining some useful constants. Set
        \[\lambda\coloneqq\frac{1}{4d}\left(\frac{\alpha_{1/2}}{2}\right)^d>0.\]
    Let $R\geq L\geq 1$ and $H\in\mathcal H_0$. Assume $L^{d+1}\leq \lambda R$ and $(RH)\cap B(0, R-L)=\emptyset$. Since $L\geq 1$ and $\alpha_{1/2}\leq1$, one has
        \[\ell\coloneqq\frac{L}{R}\leq\frac{L^{d+1}}{R}\leq \lambda \leq\frac{1}{2}.\]
    As $H$ consists of a union of lenses whose centers lie at a distance greater $1$ from the origin and do not contain $-e_d$, applying Lemma \ref{lemma_empty_ball} up to scaling by $\frac{1}{R}$, one gets
        \[B\left(Rx_\ell, \frac{\alpha_\ell}{2}L\right)\subset B^0(-Re_d, L)\setminus RH,\]
    which implies
        \begin{equation}
            \label{eq_area_L0_up}
            \begin{split}
                |B^0(-Re_d, L)\setminus RH|&\geq \left|B\left(0,\frac{\alpha_\ell}{2}L\right)\right|\geq \left|B\left(0,\frac{\alpha_{1/2}}{2}\right)\right|,
            \end{split}
        \end{equation}
    where one used $L\geq 1$ and $\alpha_{\ell}\geq\alpha_{1/2}$ as $\ell\leq1/2$ and $t\mapsto\alpha_t$ is non-increasing. One also has
        \begin{equation}
            \label{eq_area_L0_down}
            \left|B^0\left(-Re_d,\frac{\alpha_{1/2}}{2}\right)\setminus RH\right|\leq\left|B\left(0,\frac{\alpha_{1/2}}{2}\right)\right|.
        \end{equation}
    Therefore, as $t\mapsto|B^0(-Re_d, t)\setminus RH|$ is continuous, applying the intermediate value theorem with \eqref{eq_area_L0_down} and \eqref{eq_area_L0_up}, there exists $L_0\in[\frac{\alpha_{1/2}}{2}, L]$ such that
        \begin{equation*}
            \label{eq_L0_definition}
            |B^0(-Re_d, L_0)\setminus RH|=\left|B\left(0,\frac{\alpha_{1/2}}{2}\right)\right|\eqqcolon \mu.
        \end{equation*}
    \textbf{step 2.} Let us now exhibit an idealized DSF configuration within the RST setting. The statement and verification of the sought properties will be conducted in a later step. Set
        \[c_+\coloneqq -R(1-4\ell^2)e_d\quad\text{and}\quad \rho_+\coloneqq L_0-4R\ell^2\in\R.\]
    Note that since $\alpha_{1/2}\leq 1\leq L$ and $d\geq2$, $4R\ell^2=\frac{4L^2}{R}\leq\frac{4L^{d+1}}{R}\leq 4\lambda =\frac{1}{d}\left(\frac{\alpha_{1/2}}{2}\right)^d\leq \frac{\alpha_{1/2}}{4}$ and thus
        \begin{equation}
            \label{eq_rho+}
            \rho_+\geq\frac{\alpha_{1/2}}{2}-\frac{\alpha_{1/2}}{4}=\frac{\alpha_{1/2}}{4}>0.
        \end{equation}
    For any $c\in\R^d$ and $r\geq0$, define $\mathbb H^-(c\cdot e_d)\coloneqq\{y\in\R^d:y\cdot e_d\leq c\cdot e_d\}$ and $B^+(c, r)\coloneq B(c, r)\setminus\mathbb H^-(c\cdot e_d)$. Let us first show that $B^+(c_+,\rho_+)\subset B^0(-Re_d, L_0)$. Let $x\in B^+(c_+,\rho_+)$. One has
        \[\|x+Re_d\|\leq\|x+c_+\|+\|c_++Re_d\|<(L_0-4R\ell^2)+4R\ell^2=L_0,\]
    which gives $x\in B(-Re_d,L_0)$. One has also $x\in B(-Re_d,R\ell)$ as $L_0\leq L=R\ell$, implying $\overline B(x, 0)\cap\overline B(-Re_d,R\ell)=\{x\}\neq \emptyset$. Since $B(x, 0)\cap B(0,R(1-\ell))=\emptyset$ and 
        \[R+x\cdot e_d>R+c_+\cdot e_d=4R\ell^2>2R\ell^2\]
    by definition of $x\in B^+(c_+, \rho_+)\subset\R^d\setminus\mathbb H^-(c_+\cdot e_d)$, scaling by $\frac{1}{R}$ one can apply the contrapositive of Lemma \ref{lemma_flatness_default} to get that $\|x\|<R$. This shows that
        \begin{equation}
            \label{eq_B+subsetB0}
            B_+\coloneqq B^+(c_+,\rho_+)\subset B(c_+,\rho_+)\cap B(0, R)=B^0(-Re_d, L_0).
        \end{equation}
    The set $B_+\setminus RH$ will be our candidate for the idealized DSF configuration. 
    
    \textbf{step 3.} Let us now show that on an event of probability bounded away from zero independently of the configuration, $\Psi(-Re_d,\N\setminus RH)$ is uniformly distributed in $B_+\setminus RH$. To that end, first observe that the complementary set $B_-\coloneqq B^0(-Re_d, L_0)\setminus B_+$ satisfies
        \begin{equation*}
            \begin{split}
                |B_-|&\leq|B^+(-Re_d, L_0)\setminus B^+(c_+,\rho_+)|\\
                &=({L_0}^d-{\rho_+}^d)|B^+(0,1)|\\
                &\leq d{L_0}^{d-1}(L_0-\rho_+)|B^+(0,1)|,
            \end{split}
        \end{equation*}
    where the first inequality is obtained by inclusion of the underlying sets, the equality uses \eqref{eq_B+subsetB0}, and the last line follows from the mean value theorem applied to the function $z\mapsto z^d$. As $L_0\leq L$ and $L_0-\rho_+= 4R\ell^2=4\frac{L^2}{R}$, it follows that
        \begin{equation}
            \label{eq_volB-}
            \begin{split}
                |B_-|\leq 4d\frac{L^{d+1}}{R}|B^+(0,1)|
                &\leq 4d\lambda |B^+(0,1)|\\
                &=\left(\frac{\alpha_{1/2}}{2}\right)^d|B^+(0,1)|=\frac{1}{2}\left|B\left(0, \frac{\alpha_{1/2}}{2}\right)\right|=\frac{\mu}{2}.
            \end{split}
        \end{equation}
    In particular, this implies that
        \begin{equation}
            \label{eq_B+_volume}
            |B_+\setminus RH|=|B^0(-Re_d, L_0)\setminus RH|-|B_-\setminus RH|\geq \mu-|B_-|\geq\frac{\mu}{2}>0.
        \end{equation}
    Now, consider the event
        \[A\coloneqq\{\#\N\cap B_+\setminus RH=1\}\cap\{\#\N\cap B_-\setminus RH=0\}.\]
    Since $\{B_+,B_-\}$ forms a partition of $B^0(-Re_d,L_0)$, $|B_+\setminus RH|+|B_-\setminus RH|=|B^0(-Re_d,L_0)\setminus RH|=\mu$, and by properties of the Poisson point process,
        \begin{equation}
            \label{eq_proba_A}
            \P(A)=|B_+\setminus RH|e^{-|B_+\setminus RH|}e^{-|B_-\setminus RH|}
            =|B_+\setminus RH|e^{-\mu}
            \geq\frac{1}{2}\mu e^{-\mu}>0.
        \end{equation}
    Finally, observe that when $A$ occurs, there is only one element in $\N\cap B(0, R)\setminus RH$ within distance $L_0$ from $-Re_d$, which is the unique point of $\mathcal N\cap B_+\setminus RH$. This implies that conditional on $A$, $\Psi(-Re_d, \N\setminus RH)$ is uniformly distributed on $B_+\setminus RH$.

    \textbf{step 4.} Now, let us verify that $B_+\setminus RH$ indeed corresponds to an idealized DSF configuration in the sense that it can be expressed as the half-ball $B_+$ minus a union of balls avoiding $c_+$, whose centers lie in $\mathbb H^-(c_+\cdot e_d)$. To that end, first observe that by definition of $H\in\mathcal H_0$, one can write
        \[RH=\bigcup_{i\in I}B^0(c_i,\rho_i),\]
    where $I$ is some index set and for each $i\in I$, $c_i\in\R^d$ with $\|c_i\|\geq R$ as well as $-Re_d\notin B(c_i, \rho_i)$. Set $I_0\coloneqq\{i\in I:B(c_i,\rho_i)\cap B_+\neq\emptyset\}$ and
        \[H'\coloneqq\bigcup_{i\in I_0}B(c_i,\rho_i).\]
    By construction,
        \[B_+\setminus RH=B^+(c_+,\rho_+)\setminus H'.\]
    Let $i\in I_0$. The definition of $I_0$ with \eqref{eq_B+subsetB0} and $L_0\leq L$ gives $B(c_i, \rho_i)\cap B(-Re_d, L)\neq\emptyset$. Hence, as $B(c_i,\rho_i)\cap B(0,R-L)\subset (RH)\cap B(0, R-L)=\emptyset$, scaling by $\frac{1}{R}$ and applying Lemma \ref{lemma_flatness_default} gives
        \begin{equation}
            \label{eq_ci_ed}
            R+c_i\cdot e_d\leq 2R\ell^2.
        \end{equation}
    Since $R+c_+\cdot e_d=4R\ell^2\geq2R\ell^2$, it implies
        \begin{equation*}
            %\label{eq_ci_below_c+}
            c_i\in\mathbb H^-(c_+\cdot e_d).
        \end{equation*}
    Additionally, one has
        \begin{equation*}
            \begin{split}
                \|c_i-c_+\|^2-\|c_i+Re_d\|^2&=(R+c_i\cdot e_d-4R\ell^2)^2-(R+c_i\cdot e_d)^2\\
                &=8R\ell^2(2R\ell^2-R-c_i\cdot e_d)\\
                &\geq 0,
            \end{split}
        \end{equation*}
    where the inequality follows from \eqref{eq_ci_ed}. Since $\|c_i+Re_d\|\geq\rho_i$ from $-Re_d\notin B(c_i,\rho_i)$, one deduces $\|c_i-c_+\|^2\geq\|c_i+Re_d\|^2\geq{\rho_i}^2$, and thus $c_+\notin B(c_i,\rho_i)$. This shows that $H'$ consists of a union of balls avoiding $c_+$ whose centers lie in $\mathbb H^-(c_+\cdot e_d)$. This allow us to apply \cite[Proposition 3.2]{coalVSdim} up to a translation and a scaling: denoting by $Y$ a uniform point in $B^+(c_+,\rho_+)\setminus H'$, and by $U^\emptyset$ a uniform point of $B^+(0,1)$, one gets
        \begin{equation}
            \label{eq_sto_dom}
            \frac{1}{\rho_+}(Y-c_+)\cdot e_d\succeq_{\mathrm{sto}} U^\emptyset\cdot e_d,
        \end{equation}
    that is $\P(Y\cdot e_d\geq c_+\cdot e_d+h\rho_+)\geq\P(U^\emptyset\geq h)$ for all $h\in[0,1]$.

    \textbf{step 5.} Let us now put everything together to conclude. First, observe that if $(Y-c_+)\cdot e_d\geq\frac{3}{4}\rho_+$, then
        \begin{equation}
            \label{eq_Yed}
            \begin{split}
                R+Y\cdot e_d=4R\ell^2+(Y-c_+)\cdot e_d
                &\geq 4R\ell^2+\frac{3}{4}\rho_+\\
                &=4R\ell^2+\frac{3}{4}(L_0-4R\ell^2)\\
                &\geq\frac{3}{4}4R\ell^2+\frac{3}{4}(L_0-4R\ell^2)
                =\frac{3}{4}L_0.
            \end{split}
        \end{equation}
    Then, applying Lemma \ref{lemma_radial_upward_progress} up to scaling by $\frac{1}{R}$, one gets from \eqref{eq_Yed} that whenever $(Y-c_+)\cdot e_d\geq\frac{3}{4}\rho_+$,
        \begin{equation}
            \label{eq_Rprogress}
            R-\|Y\|\geq\left(\frac{3}{4}-\frac{1}{2}\right)L_0\geq\frac{\alpha_{1/2}}{8}\eqqcolon\delta>0.
        \end{equation}
    Putting everything together, one gets that
        \begin{equation}
            \begin{split}
                \P\left[R-\|\Psi(-Re_d,\N\setminus RH)\|\geq \delta \right]&\geq\P\left[A, R-\|\Psi(-Re_d,\N\setminus RH)\|\geq \delta \right]\\
                &=\P(A)\P(R-\|Y\|\geq\delta )\\
                &\geq\frac{\mu}{2}e^{-\mu}\P\left[(Y-c_+)\cdot e_d\geq\frac{3}{4}\rho_+\right]\\
                &\geq\frac{\mu}{2}e^{-\mu}\P\left(U^\emptyset\cdot e_d\geq\frac{3}{4}\right)\\
                &\eqqcolon p_0>0,
            \end{split}
        \end{equation}
    where the first equality uses that conditional on $A$, $\Psi(-Re_d,\N\setminus RH)$ is distributed as $Y$, the second inequality follows from \eqref{eq_Rprogress} as well as \eqref{eq_proba_A}, the second to last line is obtained via \eqref{eq_sto_dom} and the final inequality comes from the fact that for any $h<1$, $P(U^\emptyset\cdot e_d\geq h)>0$. The proof is complete.
\end{proof}

For now on, $\lambda$ is fixed and given by Proposition \ref{prop_progress}. We conclude the subsection by deriving an exponential moment bound from the previous result.

\begin{Proposition}
    \label{prop_lower_control}
    There exists $a=a(d)>0$ such that for all $n\geq 0$,
        \[\1_{1\leq(L_n)^{d+1}\leq\lambda R_n}\E[e^{2(R_{n+1}-R_n)}~|~\F_n]\leq a<1.\]
\end{Proposition}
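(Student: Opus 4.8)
The plan is to convert the progress probability bound from Proposition~\ref{prop_progress} into an exponential moment bound. First I would note that $R_{n+1}-R_n\le 0$ always, since $\|\Psi(x)\|<\|x\|$ by definition of $\Psi$, so on the event we are conditioning on, $e^{2(R_{n+1}-R_n)}\le 1$ deterministically; the task is to pull the expectation strictly below $1$ uniformly. To do this I would split
\[
\E[e^{2(R_{n+1}-R_n)}\mid\F_n]=\E[e^{2(R_{n+1}-R_n)}\1_{R_n-R_{n+1}\ge\delta}\mid\F_n]+\E[e^{2(R_{n+1}-R_n)}\1_{R_n-R_{n+1}<\delta}\mid\F_n],
\]
where $\delta=\delta(d)>0$ is the constant from Proposition~\ref{prop_progress}. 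On the first event the integrand is at most $e^{-2\delta}$, and on the second it is at most $1$, so the right-hand side is at most
\[
e^{-2\delta}\,\P(R_n-R_{n+1}\ge\delta\mid\F_n)+\big(1-\P(R_n-R_{n+1}\ge\delta\mid\F_n)\big)=1-(1-e^{-2\delta})\,\P(R_n-R_{n+1}\ge\delta\mid\F_n).
\]

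The key step is then to lower bound $\P(R_n-R_{n+1}\ge\delta\mid\F_n)$ by the constant $p=p(d)$ on the event $\{1\le(L_n)^{d+1}\le\lambda R_n\}$. For this I would invoke Proposition~\ref{prop_resampling}: conditionally on $\F_n$, $\pi_{n+1}$ is distributed as $\Psi(\pi_n,\N'\setminus H_n)$ for an independent copy $\N'$. By isotropy I may assume $\pi_n=-R_ne_d$ with $R_n=R_{\tau}$; the history set $H_n$, rescaled by $1/R_n$, is a normalized history set $H\in\mathcal H_0$, and the emptiness condition defining $r_n$ together with $(L_n)^{d+1}\le\lambda R_n$ translates into exactly the hypotheses $(RH)\cap B(0,R-L)=\emptyset$ and $L^{d+1}\le\lambda R$ of Proposition~\ref{prop_progress} with $R=R_n$ and $L=L_n$ (here one uses $L_n\ge 1$, guaranteed by $(L_n)^{d+1}\ge1$). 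Hence $\P(R_n-\|\Psi(\pi_n,\N'\setminus H_n)\|\ge\delta\mid\F_n)\ge p$, which gives
\[
\1_{1\le(L_n)^{d+1}\le\lambda R_n}\E[e^{2(R_{n+1}-R_n)}\mid\F_n]\le 1-(1-e^{-2\delta})p=:a<1,
\]
and $a$ depends only on $d$ since $\delta$ and $p$ do.

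I expect the only genuinely delicate point to be the careful bookkeeping in matching $\F_n$-measurable data to the deterministic hypotheses of Proposition~\ref{prop_progress}: one must check that $R_nH_n$ restricted to the relevant region is indeed empty of Poisson points outside $B(0,r_n)$, that the rescaled $H_n$ lies in $\mathcal H_0$ (lenses avoiding $-e_d$ with centers at distance $\ge 1$ from the origin, which follows from the definition of $H_n$ as a union of lenses $B^0(\pi_k,\|\pi_{k+1}-\pi_k\|)$ and the fact that $\|\pi_{k+1}-\pi_k\|<\|\pi_k\|$), and that $r_n-R_n=L_n$ so that $(R_nH_n)\cap B(0,R_n-L_n)=\emptyset$ holds by definition of $r_n$. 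Everything else is the elementary two-term split above. The constant $a$ should be recorded so that it can be used in the next section to combine with Proposition~\ref{prop_upper_control_moment} and deduce Theorem~\ref{thm_good_steps}.
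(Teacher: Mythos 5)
Your proposal is correct and follows essentially the same route as the paper: condition on $\F_n$, use isotropy and Proposition~\ref{prop_resampling} to reduce to Proposition~\ref{prop_progress}, and then bound the exponential moment by the two-term split, yielding $a=p e^{-2\delta}+(1-p)$, which is exactly the paper's constant. The bookkeeping points you flag (that $\frac{1}{R_n}H_n\in\mathcal H_0$ and that $(L_n)^{d+1}\geq 1$ gives $L_n\geq1$) are indeed the only details to verify, and your verification is sound.
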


\begin{proof}
    Let $n\geq0$. Let us reason conditionally on $\F_n$ with $R_n>0$. By isotropy of the model, one can assume without loss of generality that $\pi_n=-R_ne_d$ and thus $\frac{1}{R_n}H_n\in\mathcal H_0$. Since from Proposition \ref{prop_resampling}, conditionally on $\F_n$, $R_n-R_{n+1}$ is distributed as
        \[R_n-\|\Psi(-R_n e_d,\N'\setminus H_n)\|,\]
    where $\N'$ denotes an independent copy of $\N$, then applying Proposition \ref{prop_progress} yields
        \[\1_{1\leq (L_n)^{d+1}\leq\lambda R_n}\P(R_n-R_{n+1}\geq\delta~|~\F_n)\geq p.\]
    As $R_n-R_{n+1}$ is a positive random variable, one deduces that
        \[\1_{1\leq(L_n)^{d+1}\leq\lambda R_n}\E[e^{2(R_{n+1}-R_n)}~|~\F_n]\leq pe^{-2\delta}+(1-p)\eqqcolon a<1.\]
    The proof is complete.
\end{proof}

\subsection{Proof of Theorem \ref{thm_good_steps}}

In this subsection, we establish Theorem \ref{thm_good_steps}. We start with the following proposition, which leverages the bound obtained in the previous subsections to derive a strong exponential control on history size increments.

\begin{Proposition}
    \label{prop_exp_moment_q}
    There exists $\kappa=\kappa(d)>1$ and $q=q(d)>0$ such that
        \[\1_{(L_n)^{d+1}\leq\lambda R_n}\1_{L_n\geq \kappa}\E[e^{L_{n+1}-L_n}~|~\F_n]\leq q<1.\]
\end{Proposition}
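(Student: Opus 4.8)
The plan is to control the increment $L_{n+1}-L_n$ by decomposing it according to the two mechanisms that can change the radial width of the history set: the drift of the base point toward the origin (which tends to shrink $L$) and the overshoot of the inner boundary of the newly added lens (which tends to enlarge $L$). Concretely, recall $L_n=r_n-R_n$, so
\begin{equation*}
    L_{n+1}-L_n=(r_{n+1}-r_n)+(R_n-R_{n+1}).
\end{equation*}
The term $R_n-R_{n+1}$ is always $\geq 0$ but, by Proposition~\ref{prop_progress} (through Proposition~\ref{prop_lower_control}), it is bounded below by $\delta$ with probability at least $p$ on the relevant event, so it produces a genuine negative drift in expectation. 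The term $r_{n+1}-r_n$ is always $\leq 0$, but its absolute value $r_n-r_{n+1}$ is the overshoot, which by Proposition~\ref{prop_upper_control_proba} has a stretched-exponential tail whose rate improves as $L_n$ grows (through Proposition~\ref{prop_upper_control_moment}). The hope is that on the event $\{(L_n)^{d+1}\leq \lambda R_n,\ L_n\geq\kappa\}$, these two effects combine to give $\E[e^{L_{n+1}-L_n}\mid\F_n]<1$ for $\kappa$ large enough.

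The key technical point is that the two terms $r_{n+1}-r_n$ and $R_n-R_{n+1}$ are \emph{not independent} given $\F_n$ — both depend on where $\pi_{n+1}=\Psi(\pi_n,\N'\setminus H_n)$ lands. To handle this, I would use a crude deterministic bound on the overshoot in terms of the step length, namely $r_n-r_{n+1}\leq \|\pi_{n+1}-\pi_n\|$, and bound the step length in turn; alternatively, and more cleanly, I would split on whether the overshoot exceeds a threshold. Write, on the event $E_n\coloneqq\{(L_n)^{d+1}\leq\lambda R_n\}\cap\{L_n\geq\kappa\}$,
\begin{equation*}
    \E[e^{L_{n+1}-L_n}\mid\F_n]\leq \E\big[e^{r_n-r_{n+1}}\,e^{R_n-R_{n+1}}\mid\F_n\big]\leq \E\big[e^{2(r_n-r_{n+1})}\mid\F_n\big]^{1/2}\,\E\big[e^{2(R_{n+1}-R_n)}\mid\F_n\big]^{1/2}
\end{equation*}
by Cauchy–Schwarz. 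Now $L_n\geq\kappa>1$ and $(L_n)^{d+1}\leq\lambda R_n$ force both $L_n\leq \tfrac12 R_n$ (since $\lambda<\tfrac12$ and $L_n\geq 1$ imply $L_n\leq \lambda R_n\leq \tfrac12 R_n$) and $1\leq (L_n)^{d+1}\leq\lambda R_n$, so Proposition~\ref{prop_upper_control_moment} applies to the first factor giving $G(L_n)^{1/2}\leq G(\kappa)^{1/2}$, and Proposition~\ref{prop_lower_control} applies to the second factor giving $a^{1/2}<1$. Hence $\E[e^{L_{n+1}-L_n}\mid\F_n]\leq \sqrt{a\,G(\kappa)}$ on $E_n$. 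Since $a<1$ is fixed and $G(\kappa)\downarrow 1$ as $\kappa\to\infty$, one can choose $\kappa=\kappa(d)>1$ large enough that $a\,G(\kappa)<1$, and set $q\coloneqq\sqrt{a\,G(\kappa)}<1$.

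The main obstacle I anticipate is the dependence issue just mentioned: naively one would like to multiply the expectations of $e^{r_n-r_{n+1}}$ and $e^{R_n-R_{n+1}}$, but that is not legitimate, and the Cauchy–Schwarz step is what repairs it at the cost of passing to the squared exponential moments — which is precisely why Propositions~\ref{prop_upper_control_moment} and~\ref{prop_lower_control} were stated with the exponent $2$ rather than $1$. A secondary point to verify carefully is that the indicator constraints in Propositions~\ref{prop_upper_control_moment} and~\ref{prop_lower_control} are both implied by the event $\{(L_n)^{d+1}\leq\lambda R_n,\ L_n\geq\kappa\}$: the bound $L_n\geq\kappa>1$ gives $(L_n)^{d+1}\geq 1$ (needed for Proposition~\ref{prop_lower_control}) and, combined with $\lambda<\tfrac12$, gives $L_n\leq\tfrac12 R_n$ (needed for Proposition~\ref{prop_upper_control_moment}); this is routine but must be spelled out. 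Everything else is a direct assembly of the already-established moment bounds.
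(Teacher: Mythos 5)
Your proposal is correct and follows essentially the same route as the paper: Cauchy--Schwarz to decouple the overshoot $r_n-r_{n+1}$ from the radial progress, then Proposition~\ref{prop_upper_control_moment} (after checking $L_n\leq\frac12 R_n$) and Proposition~\ref{prop_lower_control} (after checking $1\leq(L_n)^{d+1}\leq\lambda R_n$), and finally $\kappa$ chosen large so that $aG(\kappa)<1$, yielding $q=\sqrt{aG(\kappa)}$. The only blemish is a sign slip in your displayed decomposition $L_{n+1}-L_n=(r_{n+1}-r_n)+(R_n-R_{n+1})$ (inherited from the paper's definition $L_n\coloneqq r_n-R_n$, which should read $R_n-r_n$ for $L_n$ to be the nonnegative radial width): with the intended definition one has $L_{n+1}-L_n=(r_n-r_{n+1})+(R_{n+1}-R_n)$, and then your Cauchy--Schwarz step producing $\E[e^{2(R_{n+1}-R_n)}\mid\F_n]^{1/2}$ is exactly right and matches the paper's argument.
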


\begin{proof}
    Using a Cauchy-Schwarz inequality, one can write
        \begin{equation*}
            \begin{split}
                \1_{1\leq (L_n)^{d+1}\leq\lambda R_n}\E[e^{L_{n+1}-L_n}~|~\F_n]^2
                &\leq\1_{1\leq(L_n)^{d+1}\leq\lambda R_n}\E[e^{2(R_{n+1}-R_n)}~|~\F_n]\E[e^{2(r_n-r_{n+1})}~|~\F_n]\\
                &\leq\1_{1\leq(L_n)^{d+1}\leq\lambda R_n}\E[e^{2(R_{n+1}-R_n)}~|~\F_n]G(L_n),
            \end{split}
        \end{equation*}
    where the last inequality follows from Proposition \ref{prop_upper_control_moment} since $1\leq(L_n)^{d+1}\leq\lambda R_n$ implies that $L_n\leq (L_n)^{d+1}\leq\lambda R_n\leq\frac{1}{2}R_n$. Then using Proposition \ref{prop_lower_control}, for all $\kappa>1$ one gets
        \[\1_{(L_n)^{d+1}\leq\lambda R_n}\1_{L_n\geq\kappa}\E[e^{L_{n+1}-L_n}~|~\F_n]^2\leq aG(\kappa)\to a<1\quad\text{as $\kappa\to\infty$}.\]
    Hence, fixing $q\in(\sqrt a, 1)$, the convergence above ensures that one can fix $\kappa>1$ big enough to obtain the result. The proof is complete.
\end{proof}

For now on, $\kappa>1$ is fixed and given by Proposition \ref{prop_exp_moment_q}. Recall the definition of the stopping time $\Theta$ in \eqref{def_theta}. Note that since $\N$ is locally finite, $\pi_n=0$ for all $n\geq0$ big enough, so that $\Theta$ is almost surely finite. In order to prove Theorem \ref{thm_good_steps}, for each $n\geq 0$, we introduce the stopping time
    \[T_n\coloneqq\Theta\wedge\inf\{j>n:R_n-R_j\geq1+\kappa\}.\]
We start by the following lemma.

\begin{Lemma}
    \label{lemma_T_n}
    For all $n,k\geq0$, one has
        \[\P(T_n-n>k~|~\F_n)\leq e^{2(\kappa+1)}a^k\]
\end{Lemma}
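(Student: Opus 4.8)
The plan is to convert Proposition~\ref{prop_lower_control} into an exponential supermartingale for the radial coordinate $(R_i)$ and then read off the tail of $T_n$ by an optional–stopping argument. The first and main step is to upgrade Proposition~\ref{prop_lower_control} to the statement that, for every $j\geq 0$,
\[
\1_{j<\Theta}\,\E[e^{2(R_{j+1}-R_j)}\mid\F_j]\leq a,
\]
with the same constant $a\in(0,1)$. On $\{j<\Theta\}$ one has $R_j\geq 1+\kappa$ and $(L_j)^{d+1}\leq\lambda R_j$; if moreover $(L_j)^{d+1}\geq 1$ this is exactly Proposition~\ref{prop_lower_control}, while if $L_j<1$ (for instance at $j=0$, where $L_0=0$) I would observe that $H_j$, which avoids $B(0,R_j-L_j)$, also avoids $B(0,R_j-1)$, so that after rotating so $\pi_j=-R_je_d$ and rescaling by $1/R_j$ one may apply Proposition~\ref{prop_progress} with the admissible value $L=1$ in place of $L_j$ — legitimate since $R_j\geq 1+\kappa\geq 1/\lambda$, which we assume by enlarging $\kappa$ if necessary, compatibly with the choice made in Proposition~\ref{prop_exp_moment_q} — to get $\P(R_j-R_{j+1}\geq\delta\mid\F_j)\geq p$. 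Since $R_{j+1}\leq R_j$ always (as $\pi_{j+1}=\Psi(\pi_j)$ is strictly closer to the origin than $\pi_j$, unless $\pi_j=0$), this yields $\E[e^{2(R_{j+1}-R_j)}\mid\F_j]\leq p e^{-2\delta}+(1-p)=a$ in this case as well.

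Given this estimate, fix $n$ and set, for $i\geq n$,
\[
M_i\coloneqq\exp\big(2(R_{i\wedge T_n}-R_n)\big)\,a^{-(i\wedge T_n-n)}.
\]
Then $M_n=1$, and since $R_{i\wedge T_n}\leq R_n$ and $0\leq i\wedge T_n-n\leq i-n$ we have $0\leq M_i\leq a^{-(i-n)}$, so each $M_i$ is bounded, hence integrable. Since $T_n=\Theta\wedge\inf\{j>n:R_n-R_j\geq 1+\kappa\}$ is an $(\F_i)_{i\geq 0}$-stopping time (a minimum of stopping times), the event $\{T_n>i\}$ lies in $\F_i$; on $\{T_n\leq i\}$ one has $M_{i+1}=M_i$, whereas on $\{T_n>i\}\subseteq\{i<\Theta\}$ one has $M_{i+1}=M_i\,a^{-1}e^{2(R_{i+1}-R_i)}$, so the first step gives $\E[M_{i+1}\mid\F_i]\leq M_i$ on that event. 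Altogether $(M_i)_{i\geq n}$ is an $(\F_i)_{i\geq n}$-supermartingale, and in particular $\E[M_{n+k}\mid\F_n]\leq M_n=1$ for every $k\geq 0$.

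To conclude, note that on $\{T_n-n>k\}=\{T_n>n+k\}$ the stopping time $T_n$ has not been triggered by its radial component up to time $n+k$, so $R_n-R_i<1+\kappa$ for all $n\leq i\leq n+k$ and $(n+k)\wedge T_n=n+k$; hence $M_{n+k}=e^{2(R_{n+k}-R_n)}a^{-k}\geq e^{-2(\kappa+1)}a^{-k}$ on this event. Combining with the supermartingale bound,
\[
1\geq\E[M_{n+k}\,\1_{T_n>n+k}\mid\F_n]\geq e^{-2(\kappa+1)}\,a^{-k}\,\P(T_n>n+k\mid\F_n),
\]
which rearranges into the claimed inequality $\P(T_n-n>k\mid\F_n)\leq e^{2(\kappa+1)}a^k$. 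The only genuinely delicate point is the first step — ensuring the one-step contraction holds at \emph{every} time before $\Theta$, in particular in the degenerate regime $L_j<1$; the rest is a routine exponential-martingale tail estimate, and it is precisely the sign $R_{i+1}-R_i\leq 0$ (progress towards the origin) together with the a priori control $R_n-R_i<1+\kappa$ before $T_n$ that produces the harmless prefactor $e^{2(\kappa+1)}$.
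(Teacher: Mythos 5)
Your proof is correct and rests on the same engine as the paper's: the one-step contraction $\1_{j<\Theta}\E[e^{2(R_{j+1}-R_j)}\mid\F_j]\leq a$, iterated $k$ times, with the prefactor $e^{2(\kappa+1)}$ coming from the a priori bound $R_n-R_{n+k}<\kappa+1$ on $\{T_n>n+k\}$. The paper implements the iteration directly via the bound $\1_{x<y}\leq e^{2(y-x)}$, a telescoping product and the tower property, whereas you package it as an exponential supermartingale and apply optional stopping; these are interchangeable.

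One point in your treatment deserves emphasis: the paper justifies the one-step contraction by asserting that $i<\Theta$ implies $1\leq (L_i)^{d+1}\leq\lambda R_i$, but the lower bound $(L_i)^{d+1}\geq1$ does \emph{not} follow from the definition of $\Theta$ (indeed $L_0=0$), so Proposition~\ref{prop_lower_control} cannot be invoked verbatim at every step before $\Theta$. Your patch for the regime $L_j<1$ — noting that $H_j$ avoids $B(0,R_j-1)$ and applying Proposition~\ref{prop_progress} with $L=1$, which is admissible once $\kappa$ is enlarged so that $1+\kappa\geq1/\lambda$ (harmless, since Proposition~\ref{prop_exp_moment_q} only requires $\kappa$ large enough) — is a legitimate and needed repair, and together with $R_{j+1}\leq R_j$ it yields the contraction at every step before $\Theta$. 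So your argument is, if anything, more careful than the paper's on this point.
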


\begin{proof}
    One can write
        \begin{equation*}
            \begin{split}
                \P(T_n-n>k~|~\F_n)
                &=\P(n+k<\Theta,~R_n-R_{n+k}<\kappa+1~|~\F_n)\\
                &\leq\E\left[\1_{R_n-R_{n+k}<\kappa+1}\prod_{i=0}^{k-1}\1_{n+i<\Theta}~\middle|~\F_n\right]\\
                &\leq e^{2(\kappa+1)}\E\left[\prod_{i=0}^{k-1}\1_{i<\Theta}e^{2(R_{n+i+1}-R_{n+i})}~\middle|~\F_n\right]\\
                &\leq e^{2(\kappa+1)}a^k,
            \end{split}
        \end{equation*}
    where the second inequality is obtained via the bound $\1_{x<y}\leq e^{2(y-x)}$ for all $(x, y)\in\R^2$ as well as a telescopic product, and the last line follows from Proposition \ref{prop_lower_control} since for each $i\geq 0$, $i<\Theta$ implies $1\leq(L_i)^{d+1}\leq\lambda R_i$ by definition. The proof is complete.
\end{proof}

For each $n\geq0$, we also introduce the stopping time
    \[\sigma_n\coloneq\Theta\wedge\inf\{j\geq n:L_j<\kappa,~R_n-R_j\geq\kappa+1\},\]
and establish the following.

\begin{Lemma}
    \label{lemma_sigma_n}
    For any $n\geq0$ and $\ell\geq k\geq 0$, one has
        \[\P(T_n-n\leq k,~\sigma_n-n>\ell~|~\F_n)\leq 1_{n<\Theta}e^{L_n-\kappa}G(0)^kq^{\ell-k}.\]
\end{Lemma}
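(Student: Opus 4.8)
The plan is to decompose the event $\{T_n - n \le k,\ \sigma_n - n > \ell\}$ according to the value of $T_n$. On the event $\{T_n - n \le k < \Theta - n\}$ (the case $T_n = \Theta$ forces $n \ge \Theta$ or makes the event trivial, which is absorbed in the indicator $\1_{n<\Theta}$), set $m \coloneqq T_n$, so that $n \le m \le n+k$ and $R_n - R_m \ge \kappa + 1$. The key observation is that the target radius threshold $\kappa+1$ in the definition of $\sigma_n$ is already met at time $m$: since $R$ is non-increasing along the exploration, $R_n - R_j \ge R_n - R_m \ge \kappa+1$ for all $j \ge m$. Hence, on this event, $\{\sigma_n > \ell + n\} = \{\sigma_n > \ell + n,\ j \mapsto L_j \ge \kappa \text{ on } \{m,\dots,\ell+n\}\}$, i.e. the only way $\sigma_n$ can fail to occur before time $\ell + n$ is that $L_j \ge \kappa$ for every $j \in \{m, m+1, \dots, \ell+n\}$, which is at least $\ell - k$ consecutive indices.

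Then I would condition at time $m = T_n$ using the strong Markov property of $(\pi_j, H_j)_{j\ge 0}$ (Proposition~\ref{prop_resampling}) and run the telescoping/supermartingale argument of Lemma~\ref{lemma_T_n} and Proposition~\ref{prop_exp_moment_q} from time $m$ onward. Concretely, I bound
\[
\P(L_j \ge \kappa \text{ for } m \le j \le \ell+n,\ \Theta > \ell+n \mid \F_m) \le \E\!\left[\prod_{i=0}^{\ell+n-m-1} \1_{L_{m+i}\ge\kappa}\,\1_{(L_{m+i})^{d+1}\le\lambda R_{m+i}}\,e^{L_{m+i+1}-L_{m+i}} \;\middle|\; \F_m\right] \cdot e^{\,?},
\]
where the exponential factor telescopes to $e^{L_{\ell+n} - L_m}$; using $L_{\ell+n} \ge 0$ ... wait, we need an upper bound, so I instead write $\1_{L_{\ell+n}<\kappa}$-type control only at the terminal step — more cleanly, I use $\1_{\{L_j\ge\kappa\ \forall j\}} \le e^{L_{\ell+n}-\kappa}\cdot e^{-(L_{\ell+n}-\kappa)}$... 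The cleanest route is: first handle the initial segment $n \le j < m$ exactly as in Lemma~\ref{lemma_T_n} to get the factor $G(0)^k$ — this uses Proposition~\ref{prop_upper_control_moment} with $L_j$ possibly large but always $\le \frac12 R_j$ on $\{j<\Theta\}$, hence $\E[e^{2(r_j - r_{j+1})}\mid\F_j]\le G(L_j)\le G(0)$, combined with $\1_{x<y}\le e^{2(y-x)}$ and the telescoping of $e^{2(R_j - R_{j+1})}$ — yielding the bound $e^{L_n - \kappa}G(0)^k$ for $\P(T_n - n \le k \mid \F_n)$-type quantities with the extra $e^{L_n-\kappa}$ coming from $\1_{L_m\ge\kappa}\le e^{L_m-\kappa}$ and telescoping $e^{L_j - L_{j+1}}$ from $n$ to $m$ against... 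Actually the $e^{L_n}$ factor must be tracked via $e^{L_n - L_m}$ telescoping and $\1_{L_m \ge \kappa}$; I would organize the whole product from time $n$ to time $\ell+n-1$ as one telescoping bound $e^{L_n - L_{\ell+n}} \le e^{L_n - \kappa}$ on the event that $L_{\ell+n}\ge\kappa$, split the range $\{n,\dots,\ell+n-1\}$ at $m$, apply Proposition~\ref{prop_upper_control_moment} (giving $G(0)$ per step, $k$ steps at most before $m$) on $\E[e^{2(r_j-r_{j+1})}]$ and Proposition~\ref{prop_exp_moment_q} (giving $q$ per step) on the $\ell-k$ steps after $m$ where additionally $L_j \ge \kappa$, all validated on $\{j < \Theta\}$ which ensures $(L_j)^{d+1}\le\lambda R_j$.

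So the structure is: (1) reduce to $\{n < \Theta\}$; (2) decompose over $m = T_n \in \{n,\dots,n+k\}$, or better, write a single inequality $\1_{T_n - n \le k,\ \sigma_n - n > \ell} \le \1_{n<\Theta}\,e^{L_n-\kappa}\prod_{i=0}^{\ell-1} [\text{appropriate indicator}\cdot e^{(L_{n+i+1}-L_{n+i})/?}]$ where before $T_n$ we pay with the "progress" indicators and after $T_n$ with the "$L\ge\kappa$" indicators; (3) take conditional expectations iteratively using the Markov property and the two moment bounds, applying $G(0)$ for the first $\le k$ steps and $q$ for the remaining $\ge \ell - k$ steps. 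The main obstacle is the bookkeeping in step (2): correctly matching which steps receive the $G(0)$ factor versus the $q$ factor, ensuring that on the relevant event the step $T_n$ really does transfer the radius constraint so that $\sigma_n$ inherits the "$R_n - R_j \ge \kappa+1$ for all $j \ge T_n$" condition (which is immediate from monotonicity of $R_\cdot$), and verifying that the exponential weights telescope exactly to $e^{L_n - \kappa}$ rather than leaving residual terms. Once the indicator inequality is set up correctly, the conditional-expectation computation is a routine iterated application of Propositions~\ref{prop_upper_control_moment}, \ref{prop_lower_control}, and \ref{prop_exp_moment_q} exactly in the style of the proof of Lemma~\ref{lemma_T_n}.
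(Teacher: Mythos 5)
Your proposal is correct and follows essentially the same route as the paper: the key observation that for $T_n\le j<\sigma_n$ the radius condition is already met, so only $L_j\ge\kappa$ can keep $\sigma_n$ from occurring, followed by $\1_{L_j\ge\kappa}\le e^{L_j-\kappa}$, telescoping of $e^{L_{j+1}-L_j}$, and the per-step factors $G(0)$ (via $L_{j+1}-L_j\le 2(r_j-r_{j+1})$ and Proposition~\ref{prop_upper_control_moment}) and $q$ (via Proposition~\ref{prop_exp_moment_q}). The one simplification the paper makes is to split the product at the deterministic index $n+k$ rather than at the random time $T_n$, which removes the need for any strong Markov decomposition over the value of $T_n$.
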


\begin{proof}
    First, one can write
        \begin{equation*}
            \begin{split}
                \1_{T_n-n\leq k}\1_{\sigma_n-n>\ell}
                &\leq\1_{n+\ell<\Theta}\prod_{i=0}^{\ell-k}\1_{L_{n+k+i}\geq\kappa}\\
                &\leq\1_{n+\ell<\Theta}e^{L_{n+\ell}-\kappa}\prod_{i=0}^{\ell-k-1}\1_{L_{n+k+i}\geq\kappa}\\
                &\leq\1_{n+k<\Theta}e^{L_{n+k}-\kappa}\prod_{i=0}^{\ell-k-1}\1_{n+k+i<\Theta}\1_{L_{n+k+i}\geq\kappa}e^{L_{n+k+i+1}-L_{n+k+i}},
            \end{split}
        \end{equation*}
    where the first inequality uses the fact that $T_n\leq j<\sigma_n$ implies $L_j\geq\kappa$ by construction, the second follows from the bound $\1_{x\geq y}\leq e^{x-y}$ for all $(x,y)\in\R^2$, and the last line is obtained via a telescopic product. As for each $j\geq 0$, $j<\Theta$ implies $(L_j)^{d+1}\leq\lambda R_j$, taking the expectation conditional on $\F_n$ and using Proposition \ref{prop_exp_moment_q}, it comes
        \begin{equation*}
            \begin{split}
                \P(T_n-n\leq k,~\sigma_n-n>\ell~|~\F_n)&\leq\E[\1_{n+k<\Theta}e^{L_{n+k}-\kappa}~|~\F_n]q^{\ell-k}\\
                &=e^{L_n-\kappa}\E\left[\prod_{i=0}^{k-1}1_{n+i<\Theta}e^{L_{n+i+1}-L_{n+i}}~\middle|~\F_n\right]q^{\ell-k}\\
                &\leq e^{L_n-\kappa}\E\left[\prod_{i=0}^{k-1}1_{n+i<\Theta}e^{2(r_{n+i}-r_{n+i+1})}~\middle|~\F_n\right]q^{\ell-k}\\
                &\leq 1_{n<\Theta}e^{L_n-\kappa}G(0)^kq^{\ell-k},
            \end{split}
        \end{equation*}
    where the equality uses a telescopic product, the second to last inequality uses that $L_{j+1}-L_j\leq r_j-r_{j+1}\leq 2(r_j-r_{j+1})$ for all $j\geq0$, and the last line is obtained via Proposition \ref{prop_upper_control_moment} as $1\leq L_j\leq (L_j)^{d+1}\leq\frac{1}{2}R_j$ for each $j\geq 0$ with $j<\Theta$. The proof is complete.
\end{proof}

We can now conclude the proof of Theorem \ref{thm_good_steps}, recalling the definition of the stopping times $(\tau_n)_{n\geq0}$ in \eqref{def_tau}.

\begin{proof}[Proof of Theorem \ref{thm_good_steps}]
    Let $n, k\geq 0$. By definition, one has
        \[\tau_{n+1}=\sigma_{\tau_n}.\]
    Fix an integer $A\geq1$ big enough so that $G(0)q^A<1$. From Lemma \ref{lemma_sigma_n}, one has
        \begin{equation}
            \label{eq_sigma1}
            \P[T_{\tau_n}-\tau_n\leq k,~\sigma_{\tau_n}-\tau_n>(A+1)k~|~\F_{\tau_n}]\leq \1_{\tau_n<\Theta}e^{L_{\tau_n}-\kappa}G(0)^kq^{Ak}\leq G(0)^kq^{Ak},
        \end{equation}
    where the last inequality uses that $L_{\tau_n}<\kappa$ when $\tau_n<\Theta$. From Lemma \ref{lemma_T_n}, one has also
        \begin{equation}
            \label{eq_sigma2}
            \P(T_{\tau_n}-\tau_n>k~|~\F_{\tau_n})\leq e^{2(\kappa+1)}a^k.
        \end{equation}
    Putting together \eqref{eq_sigma1} and \eqref{eq_sigma2} using a union bound, it comes
        \[\P[\tau_{n+1}-\tau_n>(A+1)k~|~\F_{\tau_n}]=\P[\sigma_{\tau_n}-\tau_n>(A+1)k~|~\F_{\tau_n}]\leq G(0)^kq^{Ak}+e^{2(\kappa+1)}a^k,\]
    and the result follows as both $a<1$ and $G(0)q^A<1$. The proof is complete.
\end{proof}

\section{Symmetrization decomposition}

\label{section_symmetrization_decomposition}

In this section, we build on the stopping times $(\tau_n)_{n\geq0}$ constructed earlier to define a suitable renewal-type decomposition of the exploration process and control the fluctuations of the path between its successive blocks. The strategy is inspired by \cite{coalVSdim}, but delicate adaptations were required to handle the radial setting.

\subsection{Construction}

In this subsection, we introduce the decomposition and establish the main properties it was intended to satisfy. Let us start with some notation. For all $n\geq0$, we define
    \[\pi_n^\uparrow\coloneqq\pi_n\left(1-\frac{\kappa}{R_n}\right)\]
if $R_n\neq 0$ and $\pi_n^\uparrow\coloneqq0$ otherwise. In words, when $R_n\geq\kappa$, $\pi_n^\uparrow$ is obtained by moving $\pi_n$ radially $\kappa$ units toward the origin. For all $n\geq 0$, we define the \emph{pseudo-renewal} event
    \[Q_{\tau_n+1}\coloneqq\{\#\N\cap B^0(\pi_{\tau_n},\kappa+1)=\#\N\cap B^0(\pi_{\tau_n}^\uparrow,1)=1\}.\]
In words, $Q_{\tau_n+1}$ occurs when there is only one point of $\N$ in $B^0(\pi_{\tau_n},\kappa+1)$ that is actually in $B^0(\pi_{\tau_n}^\uparrow,1)$. An illustration of the realization of this event is presented in Figure~\ref{fig_renewal}. The key intuition is as follows. When $\tau_n<\Theta$, we have $L_{\tau_n}<\kappa$ and thus $H_{\tau_n}\cap B(0,\|\pi_n^\uparrow\|)=\emptyset$. If, in addition, the event $Q_{\tau_n+1}$ occurs, then by construction
    \[\Psi(\pi_{\tau_n})=\Psi(\pi_{\tau_n}^\uparrow),\]
effectively allowing the exploration process to restart from $\pi_n^\uparrow$ with an empty history set and a future distribution that depends only on the position of this point, with some symmetry property. The remainder of this subsection is devoted to making this argument rigorous and constructing the corresponding decomposition of the exploration process.

\begin{figure}[h]
    \centering
    \includegraphics[width=0.6\linewidth]{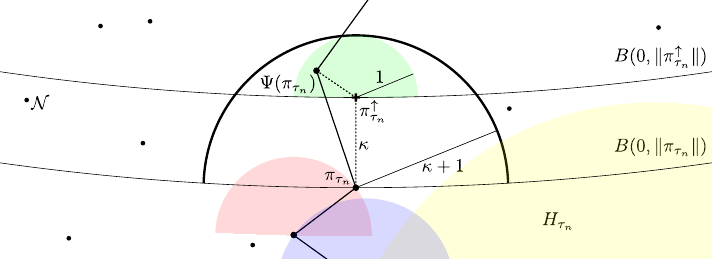}
    \caption{Illustration of a realization of the pseudo-renewal event $Q_{\tau_n+1}$ with $\tau_n<\Theta$.}
    \label{fig_renewal}
\end{figure}

To construct the decomposition, we begin by defining the first occurrence of the terminal time $\Theta$ in the sequence $(\tau_n)_{n\geq0}$ as
    \[I_\Theta\coloneqq\inf\{i\geq0:\tau_i=\Theta\}.\]
Then we define an auxiliary sequence of indices $(\eta_k)_{k\geq 0}$ inductively by
    \[\eta_0\coloneqq 0\quad\text{and}\quad\eta_{k+1}\coloneqq I_\Theta\wedge\inf\{i>\eta_k:Q_{\tau_i+1}~\text{occurs},~\tau_{i+1}<\Theta\}\]
for each $k\geq0$. Finally, we define the pseudo-renewal times $(w_n)_{n\geq0}$ by
    \[w_n\coloneqq\tau_{\eta_n}\]
for all $n\geq0$. Intuitively, $(w_i)_{i\ge 0}$ marks the times at which pseudo-renewal events occur, up to some technical adjustments. It is this sequence that defines the desired decomposition. Before establishing its key property, we need to introduce some enhanced $\sigma$-fields as $(w_n)_{n\geq0}$ are not stopping times with respect to the filtration $(\F_n)_{n\geq0}$. First, for each $i\geq0$ we set
    \[\S_i\coloneqq\sigma(\F_{\tau_i}, Q_{\tau_i+1}\cap\{\tau_{i+1}<\Theta\}).\]
The following lemma ensures that this sequence defines a filtration that is thinner than $(\F_{\tau_i})_{i\geq0}$.

\begin{Lemma}
    \label{lemma_filtration}
    For each $i\geq0$, we have $\F_{\tau_i}\subset\S_i\subset\F_{\tau_{i+1}}$. In particular, $(\S_i)_{i\geq0}$ forms a filtration.
\end{Lemma}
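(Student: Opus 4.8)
The claim has two parts: $\F_{\tau_i}\subset\S_i$ and $\S_i\subset\F_{\tau_{i+1}}$; the ``in particular'' statement is then immediate since $\F_{\tau_i}\subset\S_i\subset\F_{\tau_{i+1}}\subset\S_{i+1}$ gives nesting. The first inclusion $\F_{\tau_i}\subset\S_i$ is trivial by construction, since $\S_i$ is defined as the $\sigma$-field generated by $\F_{\tau_i}$ together with one extra event, so there is nothing to prove there. The real content is the second inclusion $\S_i\subset\F_{\tau_{i+1}}$, which amounts to showing two things: that $\F_{\tau_i}\subset\F_{\tau_{i+1}}$, and that the event $Q_{\tau_i+1}\cap\{\tau_{i+1}<\Theta\}$ is $\F_{\tau_{i+1}}$-measurable.

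For $\F_{\tau_i}\subset\F_{\tau_{i+1}}$: I would first recall that $(\F_n)_{n\ge 0}$ is a filtration (stated in Section~\ref{section_exploration}), so it suffices to argue $\tau_i\le\tau_{i+1}$ and then invoke the optional-stopping/stopped-$\sigma$-field monotonicity, i.e.\ for stopping times $S\le T$ one has $\F_S\subset\F_T$. That $\tau_i\le\tau_{i+1}$ is immediate from the definition~\eqref{def_tau}, where $\tau_{i+1}$ is an infimum over indices strictly larger than $\tau_i$ (intersected with $\Theta$, which is itself $\ge\tau_i$ once $\tau_i$ is finite; one should note $\tau_i$ is a bona fide stopping time for $(\F_n)$, as $L_n$ and $R_n$ are $\F_n$-measurable). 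The one subtlety is to make sure $R_{\tau_{i+1}}\le R_{\tau_i}$ so that $B(0,R_{\tau_{i+1}})\supset B(0,R_{\tau_i})$ — but $(R_n)$ is non-increasing by construction of the RST, so $\N\setminus B(0,R_{\tau_i})\subset\N\setminus B(0,R_{\tau_{i+1}})$; likewise $H_{\tau_i}\subset H_{\tau_{i+1}}$ since the history sets are increasing, giving directly $\F_{\tau_i}\subset\F_{\tau_{i+1}}$ even without the abstract lemma, which is probably the cleanest route.

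For the measurability of $Q_{\tau_i+1}\cap\{\tau_{i+1}<\Theta\}$ with respect to $\F_{\tau_{i+1}}$: the plan is to express this event in terms of information available at time $\tau_{i+1}$. Note $\tau_{i+1}\ge\tau_i+1$ on the event $\{\tau_{i+1}<\Theta\}$ (it is an infimum over $i>\tau_n$), so the one exploration step from $\pi_{\tau_i}$ to $\pi_{\tau_i+1}$ has been performed by time $\tau_{i+1}$, and the relevant lenses $B^0(\pi_{\tau_i},\kappa+1)$, $B^0(\pi_{\tau_i}^\uparrow,1)$ sit inside $B(0,R_{\tau_i})$, hence — provided they are either inside $\overline H_{\tau_{i+1}}$ or outside $B(0,R_{\tau_{i+1}})$, or more simply since $\F_{\tau_{i+1}}$ knows $\N$ restricted to $B(0,R_{\tau_i})\cap\overline H_{\tau_{i+1}}$ and $\N$ outside $B(0,R_{\tau_{i+1}})$... — here I would lean on the remark just before Proposition~\ref{prop_resampling} that $\F_n$ ``contains more information than strictly necessary'': indeed these small balls around $\pi_{\tau_i}$ and $\pi_{\tau_i}^\uparrow$ are swallowed by $\overline H_{\tau_i+1}\subset\overline H_{\tau_{i+1}}$ together with $B(0,R_{\tau_i}-\kappa)^c$ once one checks the geometry, so that $\#\N\cap B^0(\pi_{\tau_i},\kappa+1)$ and $\#\N\cap B^0(\pi_{\tau_i}^\uparrow,1)$ are both $\F_{\tau_{i+1}}$-measurable; and $\{\tau_{i+1}<\Theta\}$, $\{\tau_i<\Theta\}$ are $\F_{\tau_{i+1}}$-measurable as $\tau_{i+1}$, $\Theta$ are stopping times. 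The main obstacle I anticipate is precisely this bookkeeping: verifying cleanly that the balls defining $Q_{\tau_i+1}$ are contained in $\overline H_{\tau_{i+1}}\cup B(0,R_{\tau_{i+1}})^c$ (this is exactly why the filtration was deliberately enlarged, and where the constants $\kappa$, the definition of $\pi^\uparrow$, and the condition $L_{\tau_i}<\kappa$ from $\tau_i<\Theta$ all come into play), after which the abstract filtration statement follows formally.
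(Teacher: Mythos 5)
Your reduction matches the paper's: $\F_{\tau_i}\subset\S_i$ is immediate from the definition, $\F_{\tau_i}\subset\F_{\tau_{i+1}}$ follows from monotonicity of $R$ and $H$, and everything hinges on showing that $Q_{\tau_i+1}\cap\{\tau_{i+1}<\Theta\}$ is $\F_{\tau_{i+1}}$-measurable. But that last step---which you yourself defer as ``the main obstacle''---is the entire content of the lemma, and the route you sketch for it does not work. The event $Q_{\tau_i+1}$ cannot be recovered from $\N\cap\overline H_{\tau_i+1}$ together with $\N\cap B(0,R_{\tau_i}-\kappa)^c$: the lens $B^0(\pi_{\tau_i},\kappa+1)$ contains points of norm as small as $R_{\tau_i}-\kappa-1$, which lie neither outside $B(0,R_{\tau_i}-\kappa)$ nor, in general, in $\overline H_{\tau_i+1}$ (the only new lens added at step $\tau_i$ has radius $\|\pi_{\tau_i+1}-\pi_{\tau_i}\|$, which need not cover $B^0(\pi_{\tau_i},\kappa+1)$), so the claimed ``swallowing'' fails; moreover the condition $L_{\tau_i}<\kappa$ you invoke is irrelevant to this measurability question (it matters for Lemma~\ref{lemma_renewal_proba}, not here).

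The missing idea is exactly the one encoded by intersecting with $\{\tau_{i+1}<\Theta\}$ in the definition of $\S_i$: by \eqref{def_tau}, on this event $R_{\tau_i}-R_{\tau_{i+1}}\geq\kappa+1$. Every point of $B^0(\pi_{\tau_i},\kappa+1)\cup B^0(\pi_{\tau_i}^\uparrow,1)$ has norm at least $R_{\tau_i}-\kappa-1\geq R_{\tau_{i+1}}$, so $Q_{\tau_i+1}$ is a function of the $\F_{\tau_i}$-measurable point $\pi_{\tau_i}$ and of $\N\setminus B(0,R_{\tau_{i+1}})$, and $\sigma(\N\setminus B(0,R_{\tau_{i+1}}))\subset\F_{\tau_{i+1}}$ by the very definition of the (deliberately enlarged) filtration; intersecting with the $\F_{\tau_{i+1}}$-measurable event $\{\tau_{i+1}<\Theta\}$ then gives the claim. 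This is the paper's argument, and it uses no history sets at all. Since your proposal never invokes the inequality $R_{\tau_i}-R_{\tau_{i+1}}\geq\kappa+1$, the decisive step is absent and the proof does not close as written.
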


\begin{proof}
    Let $i\geq0$. By definition of $\F_{\tau_i}$, it is enough to check that $Q_{\tau_i+1}\cap\{\tau_{i+1}<\Theta\}$ is $\F_{\tau_{i+1}}$-measurable. The event $Q_{\tau_i+1}$ depends only on the configuration of $\N$ outside the set $B(0, R_{\tau_i}-\kappa-1)$. Moreover, when $\tau_{i+1}<\Theta$, one has $R_{\tau_{i+1}}\geq R_{\tau_i}+\kappa+1$. Since by definition $\sigma(\N\setminus B(0, R_{\tau_{i+1}}))\subset\F_{\tau_{i+1}}$, it follows that $Q_{\tau_i+1}\cap\{\tau_{i+1}<\Theta\}$ is indeed $\F_{\tau_{i+1}}$-measurable. This completes the proof.
\end{proof}

Then, since by construction the sequence $(\eta_k)_{k\geq0}$ consists of $(\S_i)_{i\geq0}$ stopping times, for each $n\geq 0$ the sigma-field
    \[\G_n\coloneqq \S_{\eta_n}\]
is well-defined. We can now conclude the subsection by formally designating the \emph{symmetrization decomposition} as an adapted process and establishing its key symmetry property in the following proposition.

\begin{Proposition}
    \label{prop_symmetrization_decomposition}
    We call \emph{symmetrization decomposition} the process $(\pi_{w_n})_{n\geq0}$. It is adapted to the filtration $(\G_n)_{n\geq0}$. Moreover, for each $n\geq0$, the distribution of $\mathbf p_{\perp \pi_{w_n}}(\pi_{w_{n+1}})$ conditional on $\G_n$ is symmetric.
\end{Proposition}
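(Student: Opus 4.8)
The plan is to establish the two assertions in order: first adaptedness, which is essentially bookkeeping given the machinery already in place, and then the symmetry, which is the substantive point and requires the resampling structure of Proposition~\ref{prop_resampling} together with the reflection invariance of the Poisson process. For adaptedness, I would argue that $w_n=\tau_{\eta_n}$ where $\eta_n$ is an $(\S_i)_{i\ge0}$ stopping time, so by the optional stopping formalism $\G_n=\S_{\eta_n}$ is a well-defined $\sigma$-field, and $\pi_{w_n}$ is $\F_{\tau_{\eta_n}}$-measurable hence, via Lemma~\ref{lemma_filtration} (which gives $\F_{\tau_i}\subset\S_i$), it is $\G_n$-measurable. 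Monotonicity of $(\G_n)_{n\ge0}$ follows from $\eta_n\le\eta_{n+1}$ and Lemma~\ref{lemma_filtration}. The only mild care needed is the case $\eta_n=I_\Theta$, i.e.\ when the decomposition has terminated; there $w_n=\Theta$ and $\pi_{w_n}$ is still $\F_\Theta$-measurable, so no problem arises.

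For the symmetry statement, the key is to isolate, on the event where a genuine pseudo-renewal occurs at time $w_{n+1}$, a clean conditional description of the law of $\pi_{w_{n+1}}$ given $\G_n$. Conditionally on $\G_n$, on the event $\{\eta_{n+1}<I_\Theta\}$, the pseudo-renewal event $Q_{\tau_{\eta_{n+1}}+1}$ together with $L_{w_{n+1}}<\kappa$ (which holds since $\tau_{\eta_{n+1}}<\Theta$) forces $H_{w_{n+1}}\cap B(0,\|\pi_{w_{n+1}}^\uparrow\|)=\emptyset$ and $\Psi(\pi_{w_{n+1}})=\Psi(\pi_{w_{n+1}}^\uparrow)$, so the exploration effectively restarts from $\pi_{w_{n+1}}^\uparrow$ with empty history. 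I would then invoke Proposition~\ref{prop_resampling}: conditionally on the relevant $\sigma$-field, the configuration of $\N$ inside $B(0,\|\pi_{w_{n+1}}^\uparrow\|)$ is a fresh Poisson process, so the future path $(\pi_{w_{n+1}+k})_{k\ge0}$, hence in particular $\pi_{w_{n+2}}$, is distributed as the exploration started from $\pi_{w_{n+1}}^\uparrow$ driven by an independent Poisson process. Now apply this description one index down: conditionally on $\G_n$, the point $\pi_{w_{n+1}}$ is obtained from $\pi_{w_n}^\uparrow$ (or more precisely from $\pi_{w_n}$ via the earlier restart at $w_n$) by running the exploration with a fresh Poisson process until the next pseudo-renewal. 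Since a fresh homogeneous Poisson process is invariant under the reflection through the line $\R\pi_{w_n}$ — equivalently, under any isometry fixing the axis through $0$ and $\pi_{w_n}$ — and since all the defining rules (the maps $\Psi$, the history sets, the events $Q$, the stopping times $\tau_i$, $\Theta$, $\eta_i$) are equivariant under such isometries, the conditional law of $\pi_{w_{n+1}}$ given $\G_n$ is invariant under reflection across $\R\pi_{w_n}$. Reflection across $\R\pi_{w_n}$ fixes the radial component and negates $\mathbf p_{\perp\pi_{w_n}}$, so $\mathbf p_{\perp\pi_{w_n}}(\pi_{w_{n+1}})$ is symmetric conditional on $\G_n$. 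On the complementary event $\{\eta_{n+1}=I_\Theta\}$, one has $w_{n+1}=\Theta$; here I would take the convention (implicit in the setup) that makes $\pi_{w_{n+1}}$ collinear with $\pi_{w_n}$ or otherwise renders $\mathbf p_{\perp\pi_{w_n}}(\pi_{w_{n+1}})$ trivially symmetric, e.g.\ by noting the decomposition is frozen there.

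The main obstacle is making the reflection-invariance argument fully rigorous in the presence of the history set $H_{w_n}$: the restart at $w_n$ is only valid on the pseudo-renewal event $Q_{w_n+1}$, and the $\sigma$-field $\G_n=\S_{\eta_n}$ must be shown to contain exactly enough information (the position $\pi_{w_n}$ and the occurrence of the renewal) while being reflection-symmetric itself, so that conditioning on it does not break the symmetry. Concretely, I would need to verify that $\S_{\eta_n}$ is generated by $\F_{\tau_{\eta_n}}$-data that, on $\{\eta_n<I_\Theta\}$, reduces (up to the renewal event) to the single point $\pi_{w_n}$ and the empty-history restart, and that the reflection across $\R\pi_{w_n}$ — which depends on $\pi_{w_n}$ and hence on $\G_n$ itself — can legitimately be applied fiberwise in the disintegration over $\G_n$. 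This is the delicate "adaptation to the radial setting" alluded to in the section preamble, and it is where I would spend the bulk of the argument, likely by first stating a lemma that, conditionally on $\G_n\cap\{\eta_n<I_\Theta\}$, the post-$w_n$ configuration restricted to $B(0,\|\pi_{w_n}^\uparrow\|)$ is a homogeneous Poisson process independent of $\G_n$, and then using the rotational equivariance of every construction downstream.
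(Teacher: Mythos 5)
Your route is conceptually the paper's: the paper proves the symmetry through an explicit coupling, applying the norm-preserving map $\Phi_x(y)=y-2\mathbf p_{\perp x}(y)$ to the points of $\N$ lying in $B(0,\|x\|)$, with $x=\pi_{w_n}^\uparrow$ (and $x=\pi_0$ for $n=0$, identity when $w_n=\Theta$); this leaves the revealed data ($\overline H_{w_n}$ and the exterior of $B(0,R_{w_n})$) untouched, preserves the conditional law given $\G_n$, and maps the post-$w_n$ path to its reflection, hence flips the sign of $\mathbf p_{\perp\pi_{w_n}}(\pi_{w_{n+1}})$. However, the key lemma you propose to rigorize the conditioning step would fail as stated: it is \emph{not} true that, conditionally on $\G_n\cap\{\eta_n<I_\Theta\}$, the configuration in $B(0,\|\pi_{w_n}^\uparrow\|)$ is a homogeneous Poisson process independent of $\G_n$. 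Indeed $\G_n=\S_{\eta_n}$ contains, by the very definition of $\eta_n$ (for $n\geq1$), the event $Q_{\tau_{\eta_n}+1}\cap\{\tau_{\eta_n+1}<\Theta\}$, which forces exactly one point of $\N$ in $B^0(\pi_{w_n}^\uparrow,1)\subset B(0,\|\pi_{w_n}^\uparrow\|)$ and further constrains the configuration deeper in the ball through $\{\tau_{\eta_n+1}<\Theta\}$; the conditional law in the ball is therefore a conditioned Poisson law, not a free one, and not independent of $\G_n$. What is true, and suffices, is only that this conditional law is invariant under the reflection acting \emph{inside} $B(0,\|\pi_{w_n}^\uparrow\|)$: given $\F_{\tau_{\eta_n}}$ the ball is fresh Poisson by Proposition~\ref{prop_resampling} and the map preserves Lebesgue measure, while the extra generating event of $\S_{\eta_n}$ is itself invariant under the map (the regions defining $Q_{\tau_{\eta_n}+1}$ are axially symmetric, and the radial quantities determining $\tau_{\eta_n+1}$ and $\Theta$ are unchanged). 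Note also that the reflection must be restricted to $B(0,\|\pi_{w_n}^\uparrow\|)$ rather than applied to the whole unexplored region $B(0,R_{w_n})\setminus\overline H_{w_n}$, since $H_{w_n}$ is not axially symmetric.

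A second genuine flaw is your treatment of the terminal case: on $\{w_n<\Theta,\ w_{n+1}=\Theta\}$, the point $\pi_{w_{n+1}}=\pi_\Theta$ is a nondegenerate random point, in general not collinear with $\pi_{w_n}$, and there is no ``convention'' that renders $\mathbf p_{\perp\pi_{w_n}}(\pi_{w_{n+1}})$ trivially symmetric; the symmetry in this case follows from the very same coupling, because the reflected configuration yields the same stopping times and the reflected $\pi_\Theta$ (the only trivial case is $w_n=\Theta$, where $\pi_{w_{n+1}}=\pi_{w_n}$). Finally, you leave out $n=0$, where no pseudo-renewal event is available but nothing of $\N$ has been revealed inside $B(0,\|\pi_0\|)$, so the reflection is applied in that whole ball; this is how the paper handles it.
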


\begin{figure}[h]
    \centering
    \includegraphics[width=0.6\linewidth]{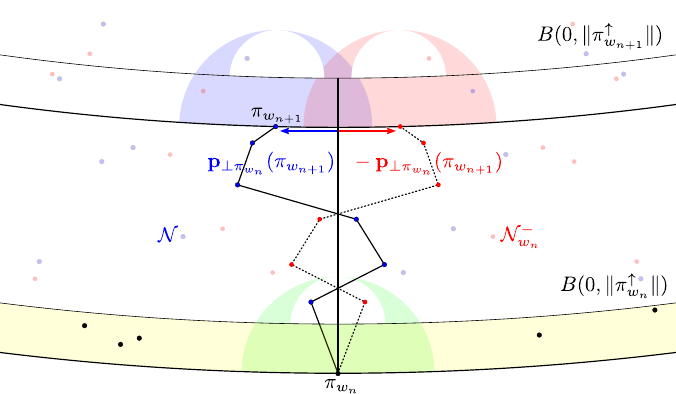}
    \caption{Illustration of the argument behind the proof of Proposition \ref{prop_symmetrization_decomposition}. The vector $p_{\perp\pi_{w_n}}(\pi_{w_{n+1}})$ is represented as a blue arrow, its opposite in red.}
    \label{fig_symmetry}
\end{figure}

\begin{proof}
    Let $n\geq 0$. By construction, $\pi_{w_n}$ is $\G_n$-measurable. To establish the symmetry property, let us construct a transformation of the underlying point configuration, illustrated in Figure~\ref{fig_symmetry}, that preserves its law and yields opposite values of $\mathbf p_{\perp \pi_{w_n}}(\pi_{w_{n+1}})$. For any $x\in\R^d\setminus\{0\}$, define the mapping
        \[\Phi_x:\R^d\to\R^d,~y\mapsto\begin{cases}
            y-2\mathbf p_{\perp x}(y)&\text{if $\|y\|<\|x\|$}\\
            y&\text{otherwise.}
        \end{cases}\]
    Intuitively, $\Phi_x$ acts as the symmetry through the axis generated by $x$ inside $B(0,\|x\|)$, leaving all points outside this ball unchanged. This application clearly preserves the Lebesgue measure, hence $\Phi_x(\N)$ and $\N$ are identically distributed. Let us now couple the random variable $\N$ with its transformed version by setting
        \[\N_{w_n}^-\coloneqq\begin{cases}
            \Phi_{\pi_0}(\N)&\text{if $n=0$}\\
            \N&\text{if $n>0$ and $w_n=\Theta$}\\
            \Phi_{\pi_{w_n}^\uparrow}(\N)&\text{otherwise.}
        \end{cases}\]
    Using Proposition \ref{prop_resampling}, we obtain that, conditionally on $\G_n$, $\N$ and $\N_{w_n}^-$ are identically distributed. By construction, running the exploration process from the same initial point $\pi_0$ on both $\N$ and $\N_{w_n}^-$ produces opposite values of $\mathbf p_{\perp \pi_{w_n}}(\pi_{w_{n+1}})$. This implies that, conditionally on $\G_n$, $\mathbf p_{\perp \pi_{w_n}}(\pi_{w_{n+1}})$ is symmetrically distributed. The proof is complete.
\end{proof}

\subsection{Fluctuation control}

In this subsection, we leverage the control on the spacing between consecutive good steps provided by Theorem \ref{thm_good_steps} as well as properties of the pseudo-renewal event to derive a bound on the fluctuation of the RST path between consecutive pseudo-renewal times. The goal is to show the following proposition.

\begin{Proposition}
    \label{prop_fluctuation_control}
    There exists $c_\Sigma=c_\Sigma(d)>0$ and $C_\Sigma=C_\Sigma(d)>0$ such that for $t>0$ and $\pi_0\in\R^d$,
        \[\P\left[\sup_{n\geq 0}\sum_{k=w_n}^{w_{n+1}-1}\|\pi_{k+1}-\pi_k\|>t\right]\leq C_\Sigma\|\pi_0\|e^{-c_\Sigma\sqrt t}.\]
\end{Proposition}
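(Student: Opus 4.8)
The plan is to bound each block sum $\sum_{k=w_n}^{w_{n+1}-1}\|\pi_{k+1}-\pi_k\|$ by combining three ingredients: (a) control on the number of exploration steps in a block, (b) control on individual edge lengths $\|\pi_{k+1}-\pi_k\|$, and (c) a union bound over $n$ using the fact that there are at most $O(\|\pi_0\|)$ pseudo-renewal blocks before the process reaches the origin. First I would observe that between $w_n=\tau_{\eta_n}$ and $w_{n+1}=\tau_{\eta_{n+1}}$, the block decomposes into sub-blocks delimited by the good times $(\tau_i)_{\eta_n\le i\le\eta_{n+1}}$. By Theorem~\ref{thm_good_steps}, each spacing $\tau_{i+1}-\tau_i$ has an exponential tail conditionally on $\F_{\tau_i}$, and by the geometric structure of the $\eta_k$'s (each index $i>\eta_n$ becomes $\eta_{n+1}$ with conditional probability bounded below, since the pseudo-renewal event $Q_{\tau_i+1}$ together with $\{\tau_{i+1}<\Theta\}$ has probability bounded away from $0$ given $\S_i$, using that $L_{\tau_i}<\kappa$ and a Poisson computation on the balls $B^0(\pi_{\tau_i},\kappa+1)$ and $B^0(\pi_{\tau_i}^\uparrow,1)$), the number $\eta_{n+1}-\eta_n$ of good steps per block also has an exponential tail given $\G_n$. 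Composing these two exponential tails gives that the total number of exploration steps $w_{n+1}-w_n$ per block has a stretched-exponential (or still exponential) tail.

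Next I would control the edge lengths. Each $\|\pi_{k+1}-\pi_k\|=\|\Psi(\pi_k)-\pi_k\|$; conditionally on $\F_k$, Proposition~\ref{prop_resampling} identifies $\pi_{k+1}$ with $\Psi(\pi_k,\N'\setminus H_k)$, and since $H_k\subset B(0,R_k)$ the relevant lens $B^0(\pi_k,t)\setminus H_k$ still contains a ball of radius $t/2$ when $t\le L_k$... more carefully, the argument of Lemma~\ref{lemma_psi_tail} gives $\P(\|\pi_{k+1}-\pi_k\|>t\mid\F_k)\le e^{-c t^d}$ uniformly (the history set only helps once one is beyond its radial width, but a cruder bound suffices here, e.g. splitting at $t$ versus $L_k$, or simply noting that outside $H_k$ within $B(0,R_k)$ the empty-ball estimate of Lemma~\ref{lemma_empty_ball} applies). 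In any case one gets a uniform tail $\P(\|\pi_{k+1}-\pi_k\|>s\mid\F_k)\le C e^{-c s^d}$, hence in particular a uniform exponential moment bound $\E[e^{\|\pi_{k+1}-\pi_k\|}\mid\F_k]\le C'$. Then, for a fixed block, conditionally on $\G_n$, I would write
\[
\P\!\left[\sum_{k=w_n}^{w_{n+1}-1}\|\pi_{k+1}-\pi_k\|>s\,\middle|\,\G_n\right]\le \P(w_{n+1}-w_n>m\mid\G_n)+\P\!\left[\sum_{k=w_n}^{w_n+m-1}\|\pi_{k+1}-\pi_k\|>s\right],
\]
and bound the second term by a Chernoff/Markov argument using the uniform exponential moment (the tower property over the $\F_k$'s handles the dependence, since $\E[e^{\theta\sum}\mid\F_{w_n}]\le (C')^m$ for small $\theta$). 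Optimizing the split $m\asymp\sqrt s$ yields a bound of the form $C e^{-c\sqrt s}$ for $\P[\sum_{k=w_n}^{w_{n+1}-1}\|\pi_{k+1}-\pi_k\|>s\mid\G_n]$, uniformly in $n$ and in the configuration.

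Finally I would handle the supremum over $n$. The key point is that the number of pseudo-renewal blocks is a.s.\ finite and, more quantitatively, controlled by $\|\pi_0\|$: each good step makes radial progress $R_{\tau_i}-R_{\tau_{i+1}}\ge\kappa+1$ toward the origin by construction, so the process reaches a radius below $1+\kappa$ (triggering $\Theta$) after at most $O(R_0/(\kappa+1))=O(\|\pi_0\|)$ good steps, hence after at most $O(\|\pi_0\|)$ pseudo-renewal blocks; beyond that $w_n$ is constant and the block sums are empty. So letting $N_\ast\le C\|\pi_0\|$ denote a deterministic bound on the number of nonempty blocks, a union bound gives
\[
\P\!\left[\sup_{n\ge0}\sum_{k=w_n}^{w_{n+1}-1}\|\pi_{k+1}-\pi_k\|>t\right]\le\sum_{n=0}^{N_\ast}\P\!\left[\sum_{k=w_n}^{w_{n+1}-1}\|\pi_{k+1}-\pi_k\|>t\right]\le (C\|\pi_0\|+1)\,C\,e^{-c\sqrt t},
\]
which is the claimed bound after renaming constants. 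The main obstacle I expect is step~(a): rigorously showing that the pseudo-renewal event has conditional probability bounded below given $\S_i$ — one must verify that $Q_{\tau_i+1}\cap\{\tau_{i+1}<\Theta\}$ can be realized with uniformly positive probability using only the freshly-revealed region (the balls near $\pi_{\tau_i}$ and $\pi_{\tau_i}^\uparrow$), that the history set does not obstruct this (which uses $L_{\tau_i}<\kappa$ on $\{\tau_i<\Theta\}$), and that after such an event the next good step indeed lands strictly before $\Theta$ with positive probability — and then composing this geometric count with the exponential spacing tail of Theorem~\ref{thm_good_steps} while keeping all estimates uniform in the (unbounded, radius-dependent) geometry. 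The edge-length tail and the final union bound are comparatively routine.
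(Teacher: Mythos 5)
Your overall architecture matches the paper's: an exponential tail on the number of exploration steps per pseudo-renewal block (Proposition~\ref{prop_w_spacing}), the deterministic bound $I_\Theta\le\|\pi_0\|$ from Lemma~\ref{lemma_i_theta} to reduce the supremum to a union over $O(\|\pi_0\|)$ blocks, and the split at block length $\asymp\sqrt t$ to produce the rate $e^{-c\sqrt t}$. The genuine gap is in your step (b). The uniform tail $\P(\|\pi_{k+1}-\pi_k\|>s\mid\F_k)\le Ce^{-cs^d}$, and hence the uniform exponential moment $\E[e^{\theta\|\pi_{k+1}-\pi_k\|}\mid\F_k]\le C'$, is false. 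Conditionally on $\F_k$ the next point is the nearest point of $\N'\setminus H_k$ to $\pi_k$ inside $B(0,R_k)$, and the history set can occupy essentially all of $B^0(\pi_k,s)$ for every $s$ well below its radial width $L_k$: Lemma~\ref{lemma_empty_ball} only guarantees an empty ball of radius proportional to $L_k$ sitting at depth of order $L_k$ below $\pi_k$, so when $L_k$ is large the edge length can be forced to be of order $L_k$ with probability close to one. Since $L_k$ is not uniformly bounded along the path — it is only guaranteed to be $<\kappa$ at the good steps $\tau_i<\Theta$, not at the intermediate indices $k$ strictly inside a block — there is no configuration-uniform exponential moment for a single edge, and the iterated Chernoff bound $\E[e^{\theta\sum}\mid\F_{w_n}]\le(C')^m$ does not go through as written.

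The paper's replacement for this step is Lemma~\ref{lemma_sum_pi_exp_mom}: bound each edge by $\|\pi_{k+1}-\pi_k\|\le L_k+(r_k-r_{k+1})$, use $L_{k+1}-L_k\le r_k-r_{k+1}$ to telescope $\sum_{k=i}^{j-1}L_k\le(j-i)(L_i+r_i-r_j)$, and invoke the unconditional overshoot bound \eqref{eq_proba_overshoot_h} to get $\E[e^{2(r_i-r_j)}\mid\F_i]\le Z^{j-i}$; anchoring at $L_{\tau_n}<\kappa$ then yields $\E[\exp(\frac1m\sum_{k=\tau_n}^{\tau_n+m-1}\|\pi_{k+1}-\pi_k\|)\mid\F_{\tau_n}]\le e^\kappa Z^m$, which is the per-block moment estimate your Chernoff step needs. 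Your sub-block structure at the good times does not by itself repair the argument, precisely because the problematic edges are those between consecutive good times. The remaining ingredients of your plan (the lower bound on the pseudo-renewal probability, the geometric tail on $\eta_{n+1}-\eta_n$, and the final union bound) correspond to Lemmas~\ref{lemma_renewal_proba} and~\ref{lemma_eta} and are carried out in the paper essentially as you describe.
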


We start with the following two lemmas.

\begin{Lemma}
    \label{lemma_i_theta}
    The inequality $I_\Theta\leq\|\pi_0\|$ holds almost surely. Furthermore, $w_n=\Theta$ for all $n\geq I_\Theta$.
\end{Lemma}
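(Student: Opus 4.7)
The plan is to extract both statements directly from the definitions in \eqref{def_theta} and \eqref{def_tau} together with the inductive specification of $(\eta_k)_{k\geq 0}$ and $(w_n)_{n\geq 0}$; no probabilistic ingredient is needed beyond the almost sure well-definedness of $(\pi_n)_{n\geq 0}$.

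First I would handle the bound $I_\Theta\leq\|\pi_0\|$ via a telescoping argument on $(R_{\tau_n})$. The key observation is that whenever $\tau_{n+1}<\Theta$, the definition of $\tau_{n+1}$ forces $\tau_{n+1}$ to lie in the set appearing in the infimum, giving $R_{\tau_n}-R_{\tau_{n+1}}\geq\kappa+1$. Applying this for $n=0,\dots,I_\Theta-2$ and summing telescopically yields $R_{\tau_{I_\Theta-1}}\leq\|\pi_0\|-(I_\Theta-1)(\kappa+1)$. Since $\tau_{I_\Theta-1}<\Theta$, the first condition in \eqref{def_theta} forces $R_{\tau_{I_\Theta-1}}\geq 1+\kappa$. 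Combining these bounds rearranges to $I_\Theta(\kappa+1)\leq\|\pi_0\|$, hence $I_\Theta\leq\|\pi_0\|/(\kappa+1)\leq\|\pi_0\|$ since $\kappa>0$. The degenerate cases ($I_\Theta=0$, or $\|\pi_0\|<1+\kappa$ which already forces $\Theta=0$ and $I_\Theta=0$) are trivial.

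Next I would prove the second claim by an induction on $k$ showing that $\min(k,I_\Theta)\leq\eta_k\leq I_\Theta$. The base case $\eta_0=0$ is immediate. For the inductive step, the definition of $\eta_{k+1}$ gives $\eta_{k+1}\leq I_\Theta$ trivially from the $\wedge I_\Theta$, and either $\eta_{k+1}=I_\Theta$ or $\eta_{k+1}$ equals the strict infimum, in which case $\eta_{k+1}\geq\eta_k+1$. In both cases $\eta_{k+1}\geq\min(I_\Theta,\eta_k+1)\geq\min(I_\Theta,k+1)$, closing the induction. Specializing to $k=I_\Theta$ forces $\eta_{I_\Theta}=I_\Theta$, and the monotonicity of $(\eta_k)$ together with its upper bound $I_\Theta$ then imposes $\eta_n=I_\Theta$ for every $n\geq I_\Theta$. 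It only remains to invoke the definition $I_\Theta=\inf\{i\geq 0:\tau_i=\Theta\}$ to conclude that $w_n=\tau_{\eta_n}=\tau_{I_\Theta}=\Theta$.

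There is no substantial obstacle here: the lemma is a bookkeeping exercise that packages monotonicity features already embedded in the definitions of $\tau_n$ and $\eta_k$. The only care needed is in handling the degenerate regime where $\Theta$ vanishes at step $0$, which is immediate from the first condition in \eqref{def_theta}.
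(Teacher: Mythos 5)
Your proposal is correct and follows essentially the same route as the paper: telescoping the guaranteed radial progress $R_{\tau_n}-R_{\tau_{n+1}}\geq\kappa+1$ between consecutive good steps before $\Theta$ to bound $I_\Theta$, and using that $\eta_{k+1}\geq\eta_k+1$ strictly before $I_\Theta$ (the paper phrases this as a telescoping sum of indicators rather than your induction on $\min(k,I_\Theta)\leq\eta_k\leq I_\Theta$, but the content is identical). Your use of $R_{\tau_{I_\Theta-1}}\geq 1+\kappa$ in place of the paper's cruder $R_\Theta\geq 0$ is a harmless, slightly cleaner variant.
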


\begin{proof}
    By construction, $R_{\tau_i}-R_{\tau_{i+1}}\geq(\kappa+1)\1_{i+1<I_\Theta}$ for all $i\geq 0$. As $\Theta<\infty$ almost surely, summing over all $i\geq0$ one obtains that
        \[\|\pi_0\|=R_0\geq R_0-R_\Theta\geq(\kappa+1)I_\Theta,\]
and it follows that $I_\Theta\leq\|\pi_0\|$. Now let $n\geq0$ and assume $w_n<\Theta$. Then by definition, $\eta_n<I_\Theta$. Since $\eta_{k+1}-\eta_k\geq \1_{\eta_k<I_\Theta}$ for all $k\geq1$, one deduces that
        \[n=\sum_{k=0}^{n-1}\1_{\eta_k<I_\Theta}\leq\sum_{k=0}^{n-1}(\eta_{k+1}-\eta_k)=\eta_n<I_\Theta.\]
    This shows that $w_n<\Theta$ implies $n<I_\Theta$. The proof is complete.
\end{proof}

\begin{Lemma}
    \label{lemma_renewal_proba}
    There exists $b=b(d)>0$ such that for all $n\geq 0$,
        \[\P(Q_{\tau_n+1}~|~\F_{\tau_n})\geq b\1_{n<I_\theta}.\]
    In particular, for each $n\geq0$, $\P(Q_{\tau_{n+1}+1}~|~\S_n)\geq b\1_{n<I_\theta}$. In words, conditional on $\F_{\tau_n}$ and provided $n<I_\Theta$, the pseudo-renewal event $Q_{\tau_n+1}$ occurs with probability bounded away from $0$.
\end{Lemma}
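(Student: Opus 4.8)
The plan is to prove the bound $\P(Q_{\tau_n+1}\mid\F_{\tau_n})\ge b\1_{n<I_\Theta}$ by noting that on the event $\{n<I_\Theta\}$, which is $\F_{\tau_n}$-measurable, we have $\tau_n<\Theta$, hence $L_{\tau_n}<\kappa$ and $R_{\tau_n}\ge 1+\kappa$. The first key observation is that the event $Q_{\tau_n+1}$ depends only on the restriction of $\N$ to the annular region $B(0,R_{\tau_n})\setminus B(0,R_{\tau_n}-\kappa-1)$, and that, because $L_{\tau_n}<\kappa$, the history set $H_{\tau_n}$ intersected with this region is contained in the thin shell $B(0,R_{\tau_n})\setminus B(0,r_{\tau_n})$ with $r_{\tau_n}=R_{\tau_n}-L_{\tau_n}>R_{\tau_n}-\kappa$. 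Thus the part of $B^0(\pi_{\tau_n},\kappa+1)$ lying outside $\overline H_{\tau_n}$ contains all of $B^0(\pi_{\tau_n}^\uparrow,1)$ as soon as $\kappa$ is, say, at least $2$ or so — one should check that $B^0(\pi_{\tau_n}^\uparrow,1)\subset B^0(\pi_{\tau_n},\kappa+1)$ and that $B^0(\pi_{\tau_n}^\uparrow,1)\cap H_{\tau_n}=\emptyset$, the latter because every point of $B^0(\pi_{\tau_n}^\uparrow,1)$ has norm at most $R_{\tau_n}-\kappa+1$ while $H_{\tau_n}$ within $B(0,R_{\tau_n})$ lives at radius $\ge R_{\tau_n}-\kappa$ — so one needs the precise numerology, but it is routine given $\kappa>1$ and can be arranged by enlarging $\kappa$ if necessary (here one must be a little careful since $\kappa$ was already fixed by Proposition~\ref{prop_exp_moment_q}, so the cleaner route is to verify the inclusions directly with the existing $\kappa$, using $R_{\tau_n}\ge 1+\kappa$).

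**Next I would** invoke Proposition~\ref{prop_resampling}: conditionally on $\F_{\tau_n}$, the configuration $\N\cap B(0,R_{\tau_n})\setminus\overline H_{\tau_n}$ is distributed as $\N'\cap B(0,R_{\tau_n})\setminus H_{\tau_n}$ for an independent copy $\N'$. Therefore, conditionally on $\F_{\tau_n}$, the pair of counts $(\#\N\cap B^0(\pi_{\tau_n},\kappa+1),\ \#\N\cap B^0(\pi_{\tau_n}^\uparrow,1))$ has the same law as $(\#\N'\cap (B^0(\pi_{\tau_n},\kappa+1)\setminus H_{\tau_n}),\ \#\N'\cap B^0(\pi_{\tau_n}^\uparrow,1))$, using that $B^0(\pi_{\tau_n}^\uparrow,1)$ avoids $H_{\tau_n}$. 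Writing $A_1\coloneqq B^0(\pi_{\tau_n}^\uparrow,1)$ and $A_2\coloneqq (B^0(\pi_{\tau_n},\kappa+1)\setminus H_{\tau_n})\setminus A_1$, the event $Q_{\tau_n+1}$ becomes $\{\#\N'\cap A_1=1\}\cap\{\#\N'\cap A_2=0\}$, which by independence of the Poisson counts on disjoint sets has probability $|A_1|e^{-|A_1|}e^{-|A_2|}$. Since $|A_1|=|B^0(\pi_{\tau_n}^\uparrow,1)|$ is bounded below by a positive constant $c_0=c_0(d)$ (here one uses $R_{\tau_n}\ge 1+\kappa$ so that $\pi_{\tau_n}^\uparrow$ has norm $\ge 1$ and $B^0(\pi_{\tau_n}^\uparrow,1)$ contains a ball of radius depending only on $d$, exactly as in the proof of Lemma~\ref{lemma_psi_tail}) and $|A_2|\le|B(0,\kappa+1)|$ is bounded above by a constant depending only on $d$, we obtain $\P(Q_{\tau_n+1}\mid\F_{\tau_n})\ge c_0 e^{-c_0}e^{-|B(0,\kappa+1)|}\eqqcolon b>0$ on $\{n<I_\Theta\}$, which is the claim.

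**For the second assertion**, I would simply take conditional expectations: $\S_n=\sigma(\F_{\tau_n},Q_{\tau_n+1}\cap\{\tau_{n+1}<\Theta\})$ contains $\F_{\tau_n}$, and $Q_{\tau_{n+1}+1}$ with $n+1<I_\Theta$ — wait, more carefully, the statement is $\P(Q_{\tau_{n+1}+1}\mid\S_n)\ge b\1_{n<I_\Theta}$; this follows by the tower property from $\P(Q_{\tau_{n+1}+1}\mid\F_{\tau_{n+1}})\ge b\1_{n+1<I_\Theta}$ together with the fact that $\{n<I_\Theta\}$ is $\S_n$-measurable, provided one knows $\{n<I_\Theta\}\subset\{n+1\le I_\Theta\}$ so that conditionally the event $\{n+1<I_\Theta\}$ is not yet excluded — in fact on $\{n<I_\Theta\}$ one has $\tau_{n+1}<\Theta$ or $\tau_{n+1}=\Theta$, and in the latter case $n+1=I_\Theta$ so $\1_{n+1<I_\Theta}=0$; combining, $\E[\1_{n+1<I_\Theta}\P(Q_{\tau_{n+1}+1}\mid\F_{\tau_{n+1}})\mid\S_n]\ge b\,\P(n+1<I_\Theta\mid\S_n)$, but this lower-bounds $\P(Q_{\tau_{n+1}+1}\cap\{n+1<I_\Theta\}\mid\S_n)$, not quite what is wanted. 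The honest fix is that the proposition as stated should be read with the convention that $Q_{\tau_n+1}$ is irrelevant once $\tau_n=\Theta$, and the clean statement one actually uses downstream is the first inequality; I would therefore present the second line as an immediate consequence of the first by conditioning, remarking that $\F_{\tau_{n+1}}\supset\S_n$ and $\1_{n<I_\Theta}\le\1_{n+1\le I_\Theta}$, and that when $n+1=I_\Theta$ both relevant indicators vanish, so no loss occurs.

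**The main obstacle** I anticipate is purely bookkeeping: pinning down the geometric inclusions $B^0(\pi_{\tau_n}^\uparrow,1)\subset B^0(\pi_{\tau_n},\kappa+1)$ and $B^0(\pi_{\tau_n}^\uparrow,1)\cap H_{\tau_n}=\emptyset$ with the already-fixed value of $\kappa$ and the constraint $R_{\tau_n}\ge 1+\kappa$, and verifying the uniform lower bound on $|B^0(\pi_{\tau_n}^\uparrow,1)|$; these are elementary but must be done with care, exactly in the spirit of the computations in Lemma~\ref{lemma_psi_tail} and Proposition~\ref{prop_upper_control_proba}. There is no conceptual difficulty once Proposition~\ref{prop_resampling} is in hand.
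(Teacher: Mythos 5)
Your proposal matches the paper's proof essentially step for step: conditioning via Proposition~\ref{prop_resampling}, splitting $B^0(\pi_{\tau_n},\kappa+1)\setminus H_{\tau_n}$ into the lens $V_{\tau_n}=B^0(\pi_{\tau_n}^\uparrow,1)$ and the remaining region $U_{\tau_n}$, writing the conditional probability as $|V_{\tau_n}|e^{-|V_{\tau_n}|}e^{-|U_{\tau_n}|}$, and bounding $|V_{\tau_n}|$ below by fitting a ball of radius $\tfrac12$ inside the lens (using $\|\pi_{\tau_n}^\uparrow\|\geq1$ on $\{n<I_\Theta\}$); the disjointness you left as bookkeeping is immediate, since $B^0(\pi_{\tau_n}^\uparrow,1)\subset B(0,\|\pi_{\tau_n}^\uparrow\|)=B(0,R_{\tau_n}-\kappa)$ whereas $H_{\tau_n}\cap B(0,R_{\tau_n})$ lies at radius at least $r_{\tau_n}>R_{\tau_n}-\kappa$ when $\tau_n<\Theta$, and your (more careful than the paper's) handling of the index shift in the ``in particular'' part, using $\S_n\subset\F_{\tau_{n+1}}$, is consistent with how the lemma is used downstream. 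The one small slip is the final constant: since $t\mapsto te^{-t}$ is not monotone, $|V_{\tau_n}|e^{-|V_{\tau_n}|}\geq c_0e^{-c_0}$ need not hold, so you should instead take $\inf\{te^{-t}:t\in[2^{-d}|B(0,1)|,\,|B(0,1)|]\}$ as in the paper, or simply bound $|V_{\tau_n}|e^{-|V_{\tau_n}|}\geq c_0e^{-|B(0,1)|}$.
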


\begin{proof}
    Let $n\geq0$. Set
        \[U_{\tau_n}\coloneqq B^0(\pi_{\tau_n},\kappa+1)\setminus[H_n\cup B^0(\pi_{\tau_n}^\uparrow, 1)]\quad\text{and}\quad V_{\tau_n}\coloneqq B^0(\pi_{\tau_n}^\uparrow, 1).\]
    Using Proposition \ref{prop_resampling} and denoting by $\N'$ an independent copy of $\N$, one can write
        \begin{equation}
            \label{eq_proba_renewal}
            \begin{split}
                \1_{i<I_\theta}\P(Q_{\tau_i+1}~|~\F_{\tau_i})&=\1_{i<I_\theta}\P(\#\N'\cap U_{\tau_n}=0, ~\#\N'\cap V_{\tau_n}=1~|~\F_{\tau_n})\\
                &=\1_{i<I_\theta}e^{-|U_{\tau_n}|}|V_{\tau_n}|e^{-|V_{\tau_n}|},
            \end{split}
        \end{equation}
    where one used that $U_{\tau_n}$ and $V_{\tau_n}=\emptyset$ are disjoint, as well as the fact that by construction, $H_{\tau_n}\cap B(0,\|\pi_{\tau_n}^\uparrow\|)=\emptyset$ when $n<I_\Theta$. It now remains to exhibit suitable deterministic bounds for $|U_{\tau_n}|$ and $|V_{\tau_n}|$ which do not depend of $n$. Since $U_{\tau_n}\subset B(\pi_{\tau_n},\kappa+1)$ and $V_{\tau_n}\subset B(\pi_{\tau_n}^\uparrow,1)$, one has
        \begin{equation}
            \label{eq_volume_upper}
            |U_{\tau_n}|\leq(\kappa+1)^d|B(0,1)|\quad\text{and}\quad|V_{\tau_n}|\leq|B(0,1)|.
        \end{equation}
    Now, observe that when $n<I_\Theta$ one has $\|\pi_{\tau_n}^\uparrow\|\geq 1$, so that setting
        \[c\coloneqq\pi_{\tau_n}^\uparrow\left(1-\frac{1}{2\|\pi_{\tau_n}^\uparrow\|}\right),\]
    one gets 
        \begin{equation}
            \label{eq_bc12}
            B\left(c,\frac{1}{2}\right)\subset B^0(\pi_{\tau_n}^\uparrow, 1)
        \end{equation}
    Indeed, if $x\in B(c,\frac{1}{2})$, then $\|x\|\leq\|c\|+\|x-c\|<(\|\pi_n^\uparrow\|-\frac{1}{2})+\frac{1}{2}=\|\pi_n^\uparrow\|$ and similarly $\|x-\pi_n^\uparrow\|\leq\|x-c\|+\|c-\pi_n^\uparrow\|<\frac{1}{2}+\frac{1}{2}=1$. From \eqref{eq_bc12} it comes
        \[|V_{\tau_n}|\geq\frac{1}{2^d}|B(0,1)|,\]
    and injecting together with \eqref{eq_volume_upper} in \eqref{eq_proba_renewal}, one deduces that
        \[\P(Q_{\tau_n+1}~|~\F_{\tau_n})\geq b\1_{n<I_\theta}\]
    with
        \[b\coloneqq e^{-(\kappa+1)^d|B(0,1)|}\inf\left\{te^{-t}:t\in\left[\frac{1}{2^d}|B(0,1)|,|B(0,1)|\right]\right\}>0.\]
    This shows the first part of the result. Finally, since $\S_n\subset\F_{\tau_{n+1}}$ from Lemma \ref{lemma_filtration}, one obtains
        \[\1_{n<I_\Theta}\P(Q_{\tau_{n+1}+1}~|~\S_n)=\E[\1_{n<I_\Theta}\P(Q_{\tau_{n+1}+1}~|~\F_{\tau_{n+1}})~|~\S_n]\geq b\1_{n<I_\Theta},\]
    which completes the proof.
\end{proof}

Next, we derive a control on the spacing between consecutive indices $(\eta_k)_{k\geq 0}$ and, in turn, on the corresponding times $(w_n)_{n\geq0}$.

\begin{Lemma}
    \label{lemma_eta}
    For all $n, k\geq0$, one has
        \[\P(\eta_{n+1}-\eta_n>k)\leq(1-b)^{k-1}.\]
    In words, the number of good steps separating between two consecutive pseudo-renewal times admits an exponentially decaying tail.
\end{Lemma}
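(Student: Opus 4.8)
The plan is to bound $\eta_{n+1}-\eta_n$ by a geometric random variable using the conditional lower bound on pseudo-renewal probabilities from Lemma~\ref{lemma_renewal_proba}, together with Theorem~\ref{thm_good_steps} (which guarantees the good steps $\tau_i$ are well-defined with geometric spacing) and Lemma~\ref{lemma_i_theta} (which guarantees $I_\Theta$ is almost surely finite, so the inductive construction of the $\eta_k$ does not get stuck before $I_\Theta$ for the wrong reason). The key point is that $\eta_{n+1}$ is the first index $i>\eta_n$, capped at $I_\Theta$, at which both $Q_{\tau_i+1}$ occurs and $\tau_{i+1}<\Theta$. So on the event $\{\eta_{n+1}-\eta_n>k\}$, for each of the indices $i=\eta_n+1,\dots,\eta_n+k-1$ (assuming they are all $<I_\Theta$), the event $Q_{\tau_i+1}\cap\{\tau_{i+1}<\Theta\}$ must have failed; and if some index in that range equals or exceeds $I_\Theta$, then in fact $\eta_{n+1}=I_\Theta$ was reached, which only helps.

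First I would fix $n\geq 0$ and $k\geq 1$ and work on the event $\{\eta_{n+1}-\eta_n>k\}$. I claim this event is contained in $\bigcap_{i=\eta_n+1}^{\eta_n+k-1}\bigl(\{i<I_\Theta\}\cap (Q_{\tau_i+1}\cap\{\tau_{i+1}<\Theta\})^c\bigr)$, since the inductive definition of $\eta_{n+1}$ takes the infimum over $i>\eta_n$ with $Q_{\tau_i+1}$ occurring and $\tau_{i+1}<\Theta$, capped at $I_\Theta$: if for some $i$ in the range $\eta_n+1\le i\le \eta_n+k-1$ we had $i\ge I_\Theta$, then $\eta_{n+1}\le I_\Theta\le \eta_n+k-1 < \eta_n+k$, contradicting $\eta_{n+1}-\eta_n>k$. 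Hence on $\{\eta_{n+1}-\eta_n>k\}$ every such $i$ satisfies $i<I_\Theta$ and the ``renewal with continuation'' event fails. Then I would peel off the last index and condition. Writing $Z_i\coloneqq\1_{i<I_\Theta}\1_{Q_{\tau_i+1}\cap\{\tau_{i+1}<\Theta\}}$, note that $\{Q_{\tau_i+1}\cap\{\tau_{i+1}<\Theta\}\}$ is, by Lemma~\ref{lemma_filtration}, $\S_i$-measurable, while $\{i<I_\Theta\}$ is $\F_{\tau_i}\subset\S_{i-1}\subset\S_i$-measurable. More usefully, conditioning on $\S_{i-1}$, Lemma~\ref{lemma_renewal_proba} gives $\P(Q_{\tau_i+1}\mid \S_{i-1})\ge b\1_{i-1<I_\Theta}$, hence on $\{i-1<I_\Theta\}$ (which forces $i\le I_\Theta$, and in fact we will be on $\{i<I_\Theta\}$) one has $\P\bigl(Q_{\tau_i+1}\cap\{\tau_{i+1}<\Theta\}\mid\S_{i-1}\bigr)\ge$ something bounded below; care is needed here since we also need $\tau_{i+1}<\Theta$. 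A cleaner route: on $\{i<I_\Theta\}$ we have $\tau_{i+1}<\Theta$ automatically by definition of $I_\Theta$, so $\{i<I_\Theta\}\cap Q_{\tau_i+1}\subset\{i<I_\Theta\}\cap(Q_{\tau_i+1}\cap\{\tau_{i+1}<\Theta\})$, and thus on $\{i<I_\Theta\}$ the failure of $Q_{\tau_i+1}\cap\{\tau_{i+1}<\Theta\}$ forces the failure of $Q_{\tau_i+1}$, which has conditional probability at most $1-b$ given $\S_{i-1}$ on $\{i-1<I_\Theta\}$.

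With that observation the argument is a standard iterated-conditioning bound: set $G_i\coloneqq \{i<I_\Theta\}\cap Q_{\tau_i+1}^c$ and note $\{\eta_{n+1}-\eta_n>k\}\subset\bigcap_{i=\eta_n+1}^{\eta_n+k-1}G_i$. Using the tower property over the $\S$-filtration, peeling indices from $i=\eta_n+k-1$ down to $i=\eta_n+2$ and applying the bound $\P(Q_{\tau_i+1}^c\mid\S_{i-1})\le 1-b$ on $\{i-1<I_\Theta\}$ at each step (while $\{i<I_\Theta\}\subset\{i-1<I_\Theta\}$ so the indicator is compatible), one collects a factor $(1-b)$ each time, i.e.\ $k-2$ such factors over the indices $\eta_n+2,\dots,\eta_n+k-1$, leaving the innermost probability $\le 1$. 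To be slightly careful about the random lower endpoint $\eta_n$, one first fixes the value of $\eta_n=m$ on the $\S_m$-measurable event $\{\eta_n=m\}$, runs the peeling with deterministic indices $m+1,\dots,m+k-1$, and sums over $m$; since $\{\eta_n=m\}$ is $\G_n=\S_m$-measurable, the conditioning is legitimate. This yields $\P(\eta_{n+1}-\eta_n>k)\le(1-b)^{k-2}\cdot 1\le(1-b)^{k-1}$ once one checks the exponent bookkeeping (for $k=1$ the bound is trivial since $(1-b)^0=1$; for $k\ge 2$ there are at least $k-1$ indices in the range and hence at least $k-1$ conditionally independent-ish failures to exploit, matching the claimed exponent). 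I expect the main obstacle to be precisely this exponent bookkeeping together with the handling of the random endpoint $\eta_n$ and the interaction between the two events $Q_{\tau_i+1}$ and $\{\tau_{i+1}<\Theta\}$: one must be scrupulous that conditioning on $\S_{i-1}$ really does leave $Q_{\tau_i+1}$ with probability at least $b$ on the relevant event, and that the indicator $\1_{i<I_\Theta}$ is measurable with respect to the $\sigma$-field one is conditioning on at each stage — which is exactly why the enhanced filtration $(\S_i)$ (hence $(\G_n)$) was introduced in Lemma~\ref{lemma_filtration}.
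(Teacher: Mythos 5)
Your overall route is the same as the paper's: include $\{\eta_{n+1}-\eta_n>k\}$ in the simultaneous failure of the pseudo-renewal events at the $k-1$ intermediate good-step indices, then peel these failures one at a time through the enhanced filtration $(\S_i)_{i\geq0}$, using the lower bound $b$ of Lemma~\ref{lemma_renewal_proba} and the fact that the $\eta_j$ are $(\S_i)$-stopping times (handled, as you do, by decomposing over $\{\eta_n=m\}\in\S_m$). However, two steps as written are incorrect and need repair. First, the implication you invoke in your ``cleaner route'', that on $\{i<I_\Theta\}$ one automatically has $\tau_{i+1}<\Theta$, is false: $i<I_\Theta$ only gives $\tau_i<\Theta$ (take $I_\Theta=i+1$). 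The inclusion you want is nevertheless true, but for a different reason: on $\{\eta_{n+1}-\eta_n>k\}$ one has $i+1\leq\eta_n+k<\eta_{n+1}\leq I_\Theta$ for every $i\leq \eta_n+k-1$, so $\tau_{i+1}<\Theta$ holds there, and the failure of $Q_{\tau_i+1}\cap\{\tau_{i+1}<\Theta\}$ then does force the failure of $Q_{\tau_i+1}$. Keeping the indicator $\1_{i+1<I_\Theta}$ rather than $\1_{i<I_\Theta}$ in your events $G_i$ also settles a measurability point you gloss over: $Q_{\tau_j+1}$ alone is only known to be $\F_{\tau_{j+1}}$-measurable on $\{\tau_{j+1}<\Theta\}$, so the earlier factors in the peeled product are $\S_{i-1}$-measurable only in this intersected form.

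Second, the exponent bookkeeping is not a detail you can wave at: your explicit peeling, from $i=\eta_n+k-1$ down to $i=\eta_n+2$ only, produces $k-2$ factors, and the inequality $(1-b)^{k-2}\cdot1\leq(1-b)^{k-1}$ you then write goes the wrong way, since $1-b<1$. To reach the stated exponent you must also peel the innermost index $i=m+1$ on $\{\eta_n=m\}$, conditioning on $\S_m$ and applying precisely the ``in particular'' clause of Lemma~\ref{lemma_renewal_proba}, namely $\P(Q_{\tau_{m+1}+1}\mid\S_m)\geq b\1_{m<I_\Theta}$, together with $\{m<I_\Theta\}\in\F_{\tau_m}\subset\S_m$. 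This last peel is available in your setup (you even set it up when fixing $\eta_n=m$), and it is exactly how the paper collects all $k-1$ factors; with these two fixes your argument coincides with the paper's proof.
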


\begin{proof}
    Let $n\geq 0$ and $k\geq1$. One can write
        \begin{equation*}
            \begin{split}
                \P(\eta_{n+1}-\eta_n>k)&\leq\E\left[\1_{\eta_n+\ell<I_\Theta}\prod_{i=1}^{k-1}\Big(1-\1_{Q_{\tau_{\eta_n+i}+1}}\Big)
                \right]\\
                &\leq\E\left[\prod_{i=1}^{k-1}\1_{\eta_n+i<\Theta}\Big(1-\1_{Q_{\tau_{\eta_n+i}+1}}\Big)
                \right]\\
                &=\E\left[\prod_{i=1}^{k-1}\1_{\eta_n+i<\Theta}(1-\P(Q_{\tau_{\eta_n+i}+1}~|~\S_{\eta_n+i}))
                \right]
                \leq(1-b)^{k-1},
            \end{split}
        \end{equation*}
    where the first line follows from the definition of $\eta_{k+1}$, and the last inequality uses Lemma \ref{lemma_renewal_proba} together with the fact that for each $j\geq0$, $\eta_j$ is a $(\S_n)_{n\geq0}$ stopping time. The proof is complete.
\end{proof}

\begin{Proposition}
    \label{prop_w_spacing}
    There exists $c_{\mathrm w}=c_{\mathrm w}(d)>0$ and $C_{\mathrm w}=C_{\mathrm w}(d)>0$ such that for all $k\geq0$,
        \[\P\left[\sup_{n\geq0}(w_{n+1}-w_n)>k\right]\leq C_{\mathrm w}\|\pi_0\|e^{-c_{\mathrm w} k}.\]
\end{Proposition}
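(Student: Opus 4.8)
The plan is to reduce the supremum over all $n\ge 0$ to one over a \emph{deterministic} finite window of indices, using Lemma~\ref{lemma_i_theta}, and then to dominate each increment $w_{n+1}-w_n$ by the product of the number of good steps it contains and the size of the largest of these steps, controlled respectively by Lemma~\ref{lemma_eta} and Theorem~\ref{thm_good_steps}.

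First I would dispose of the degenerate case $R_0=\|\pi_0\|<1+\kappa$: then $\Theta=0$ by \eqref{def_theta}, hence $\tau_n=0$ for all $n$ by \eqref{def_tau}, so $I_\Theta=0$, $\eta_n=0$ and $w_n=0$ for every $n$, and the left-hand side vanishes. So assume $\|\pi_0\|\ge1+\kappa$ and set $N\coloneqq\lceil\|\pi_0\|\rceil\le2\|\pi_0\|$. By Lemma~\ref{lemma_i_theta}, $I_\Theta\le\|\pi_0\|$, hence $I_\Theta\le N$ since $I_\Theta\in\mathbb N$; moreover $\eta_n\le I_\Theta\le N$ for every $n$ by construction of $(\eta_k)_{k\ge0}$, and $w_{n+1}-w_n=\tau_{\eta_{n+1}}-\tau_{\eta_n}$ vanishes for $n\ge I_\Theta$ again by Lemma~\ref{lemma_i_theta}. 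Writing
    \[E\coloneqq\max_{0\le n<N}(\eta_{n+1}-\eta_n)\qquad\text{and}\qquad T\coloneqq\max_{0\le j<N}(\tau_{j+1}-\tau_j),\]
the telescoping identity $w_{n+1}-w_n=\sum_{j=\eta_n}^{\eta_{n+1}-1}(\tau_{j+1}-\tau_j)$ together with the inclusion $[\eta_n,\eta_{n+1})\subseteq[0,N)$ gives $w_{n+1}-w_n\le(\eta_{n+1}-\eta_n)\max_{\eta_n\le j<\eta_{n+1}}(\tau_{j+1}-\tau_j)\le E\,T$ for every $n$ (trivially when $\eta_n=\eta_{n+1}$), whence $\sup_{n\ge0}(w_{n+1}-w_n)\le E\,T$.

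It then remains to bound $\P(E\,T>k)$. Since $\{E\,T>k\}\subseteq\{E>\sqrt k\}\cup\{T>\sqrt k\}$, a union bound gives $\P(E\,T>k)\le\P(E>\sqrt k)+\P(T>\sqrt k)$. For the first term, a union bound over $n\in\{0,\dots,N-1\}$ combined with Lemma~\ref{lemma_eta} (and rounding $\sqrt k$ down) yields $\P(E>\sqrt k)\le N(1-b)^{\sqrt k-2}$; for the second, a union bound over $j\in\{0,\dots,N-1\}$ combined with the estimate of Theorem~\ref{thm_good_steps} in its unconditional form (obtained by taking expectations) yields $\P(T>\sqrt k)\le NC_\tau e^{-c_\tau\sqrt k}$. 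Since $b$, $C_\tau$ and $c_\tau$ depend only on $d$, combining these bounds with $N\le2\|\pi_0\|$ produces constants $c_{\mathrm w},C_{\mathrm w}>0$ depending only on $d$ for which the claimed inequality holds, enlarging $C_{\mathrm w}$ to absorb the range of bounded $k$.

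The argument is essentially routine once these quantities are isolated; the one point requiring care is the bookkeeping that makes the prefactor linear in $\|\pi_0\|$. This rests on the fact that Lemma~\ref{lemma_i_theta} does more than make the supremum over $n$ effectively finite: through the chain $\eta_n\le I_\Theta\le N$, it confines every good-step index entering any block to the \emph{deterministic} window $\{0,\dots,N-1\}$, so that each of the two union bounds contributes only $N\le2\|\pi_0\|$ terms rather than a random number of them.
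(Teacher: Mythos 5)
There is a genuine gap: your domination $\sup_{n\ge0}(w_{n+1}-w_n)\le E\,T$ followed by the split $\{E\,T>k\}\subseteq\{E>\sqrt k\}\cup\{T>\sqrt k\}$ only yields a bound of order $\|\pi_0\|\,e^{-c\sqrt k}$, i.e.\ stretched-exponential decay in $k$, whereas the proposition claims genuine exponential decay $e^{-c_{\mathrm w}k}$. This loss is not an artifact of choosing the splitting point at $\sqrt k$: if you split at $m$ and $k/m$ and optimize over $m$, two exponential tails for $E$ and $T$ still only give $e^{-c\sqrt k}$ (the product of two variables with exponential tails generically has a tail of this stretched type), and the deterministic bound $E\le N$ only produces a rate $c_\tau k/N$ that degrades with $\|\pi_0\|$. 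So the product-of-maxima reduction is intrinsically too lossy to prove the stated rate, and the weaker bound cannot be absorbed into the constants since the inequality must hold for all $k$.

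The point you are missing is that the block increment should be bounded by the \emph{sum} of at most $k$ consecutive $\tau$-increments and that this sum should be controlled jointly, not through its largest term. Concretely, on the event $\{\eta_{n+1}-\eta_n\le k\}$ (whose complement has probability at most $(1-b)^{k-1}$ by Lemma~\ref{lemma_eta}) one has $w_{n+1}-w_n\le\tau_{\eta_n+k}-\tau_{\eta_n}$, and Theorem~\ref{thm_good_steps} gives a \emph{conditional} exponential moment $\E[e^{t(\tau_{j+1}-\tau_j)}\,|\,\F_{\tau_j}]\le M$ uniformly in $j$; chaining these by iterated conditioning yields $\E[e^{t(\tau_{n+k}-\tau_n)}]\le M^k$, hence $\P(\tau_{n+k}-\tau_n>Ak)\le(Me^{-tA})^k$, which is exponentially small in $k$ once $A$ is fixed large enough that $Me^{-tA}<1$. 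Combining the two failure events with a union bound over the at most $\lceil\|\pi_0\|\rceil$ relevant indices (your window reduction via Lemma~\ref{lemma_i_theta} is fine and is also what the paper does) proves the claim at threshold $Ak$ with a constant $A=A(d)$, which after rescaling gives the exponential rate $c_{\mathrm w}$. Your argument, by contrast, would only propagate a $\sqrt{}\,$-exponent into Proposition~\ref{prop_fluctuation_control} and Theorem~\ref{thm_deviation_control}, and in any case does not establish the proposition as stated.
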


\begin{proof}
    Let $A\in\mathbb N^*$. Since $I_\Theta\leq\|\pi_0\|$ and $w_{n+1}=w_n$ for all $n\geq I_\Theta$ by Lemma \ref{lemma_i_theta}, for any $k\geq0$, one can write
        \begin{equation}
            \label{eq_wn_spacing}
            \begin{split}
                \P\left[\sup_{n\geq0}(w_{n+1}-w_n)>Ak\right]&\leq\P(\exists 0\leq n<I_\Theta~\eta_{n+1}-\eta_n>k)+\P(\exists 0\leq n<I_\Theta~\tau_{n+k}-\tau_n>Ak)\\
                &\leq\|\pi_0\|\left(\sup_{n\geq0}\P(\eta_{n+1}-\eta_n>k)+\sup_{n\geq0}\P(\tau_{n+k}-\tau_n>Ak)\right).
            \end{split}
        \end{equation}
    Theorem \ref{thm_good_steps} implies that there exists $t,M>0$ such that for all $n\geq 0$,
        \[\E[e^{t(\tau_{n+1}-\tau_n)}~|~\F_n]\leq M<\infty.\]
    Then, it comes that for each $n\geq 0$,
        \[\P(\tau_{n+k}-\tau_n>Ak)\leq e^{-tAk}\E[e^{t(\tau_{n+k}-\tau_n)}]\leq e^{-tAk}M^k.\]
    Injecting in \eqref{eq_wn_spacing} together with Lemma \ref{lemma_eta}, one obtains that
        \[\P\left[\sup_{n\geq0}(w_{n+1}-w_n)>Ak\right]\leq\|\pi_0\|\left[(1-b)^{k-1}+(Me^{-tA})^k\right].\]
    Finally, fixing $A$ big enough so that $Ce^{-tA}<1$, the result follows.
\end{proof}

To derive Proposition~\ref{prop_fluctuation_control} from the previous result, we will need the following lemma.

\begin{Lemma}
    \label{lemma_sum_pi_exp_mom}
    There exists $Z\in(1, \infty)$ such that for all $j>i\geq0$ one has
        \[\E\left[\exp\left(\frac{1}{j-i}\sum_{n=i}^{j-1}\|\pi_{n+1}-\pi_n\|\right)~\middle|~\F_i\right]\leq e^{L_i}Z^{j-i}.\]
\end{Lemma}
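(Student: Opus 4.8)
The plan is to bound the exponential moment of each individual increment $\|\pi_{n+1}-\pi_n\|$ conditional on $\F_n$, and then chain these bounds together along the telescoping product. The natural first step is to observe that $\|\pi_{n+1}-\pi_n\| = \|\Psi(\pi_n)-\pi_n\|$, and Proposition~\ref{prop_resampling} tells us that conditionally on $\F_n$ this is distributed as $\|\Psi(\pi_n,\N'\setminus H_n)-\pi_n\|$ for an independent copy $\N'$. The key point is that one step can be split into the part that reaches below the radius $r_n$ and the part absorbed by the history set: more precisely, if $\|\Psi(\pi_n)-\pi_n\|>t$ then $B^0(\pi_n,t)\setminus\overline H_n$ must be empty of $\N$-points. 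When $t\geq L_n$ this means that a ball-region of radius essentially $t$ lying outside $H_n$ is empty, and the same argument as in Lemma~\ref{lemma_empty_ball} and Proposition~\ref{prop_upper_control_proba} (we can lift the restriction $L_n\leq\frac12 R_n$ at the cost of only using the control for $t\geq L_n$, since for $t<L_n$ we just bound the indicator by $e^{L_n-t}$ trivially, or rather absorb it into the $e^{L_i}$ factor) gives
\[
\P(\|\pi_{n+1}-\pi_n\|>t \mid \F_n)\leq e^{L_n}\,e^{-c(t-L_n)_+^d}\wedge 1,
\]
or something of this shape; the factor $e^{L_n}$ is what accounts for the portion of the step hidden inside the history.

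Given such a per-step tail bound, the next step is to pass to exponential moments. Writing $S\coloneqq\sum_{n=i}^{j-1}\|\pi_{n+1}-\pi_n\|$ and $m\coloneqq j-i$, I would bound $\E[e^{S/m}\mid\F_i]$ by iterating the tower property: condition on $\F_{j-1}$, control $\E[e^{\|\pi_j-\pi_{j-1}\|/m}\mid\F_{j-1}]$ by a constant $Z$ times $e^{L_{j-1}/m}$ using the tail estimate above and the crude bound $L_{j-1}\le L_{j-2}+(r_{j-2}-r_{j-1})$ together with Proposition~\ref{prop_upper_control_moment} to handle the accumulation of the $L$'s. Here is the cleanest route: first establish, via the one-step tail, that for every $n$,
\[
\E\!\left[\exp\!\left(\|\pi_{n+1}-\pi_n\| + 2(r_n-r_{n+1})\right)\,\middle|\,\F_n\right]\leq e^{L_n}\,\tilde G(L_n)
\]
for some non-increasing $\tilde G$ bounded by a constant $Z_0$, by Cauchy--Schwarz exactly as in the proof of Proposition~\ref{prop_exp_moment_q} (combining the step-tail with Proposition~\ref{prop_upper_control_moment}); then since $L_{n+1}\le L_n+(r_n-r_{n+1})$, one gets $\E[e^{\|\pi_{n+1}-\pi_n\|+L_{n+1}}\mid\F_n]\le Z_0\,e^{2L_n}$, and a telescoping induction over $n$ from $i$ to $j$ turns this into a clean bound of the form $e^{2L_i}Z_0^{\,j-i}$ for $\E[e^{\sum\|\pi_{n+1}-\pi_n\|+L_j}\mid\F_i]$, hence a fortiori for $\E[e^{\sum\|\pi_{n+1}-\pi_n\|}\mid\F_i]$; finally Jensen's inequality (or simply $e^{S/m}\le 1+e^S$, or monotonicity in the exponent since $1/m\le 1$) downgrades this to the claimed bound on $\E[e^{S/m}\mid\F_i]$ with $Z\coloneqq\max\{Z_0, e\}$ and the factor $e^{2L_i}$ absorbed into $e^{L_i}Z$ by enlarging $Z$, or one keeps $e^{2L_i}$ and notes it is what the statement allows up to renaming.

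The main obstacle, I expect, is obtaining the one-step tail bound $\P(\|\pi_{n+1}-\pi_n\|>t\mid\F_n)\le e^{L_n}e^{-c(t-L_n)_+^d}$ \emph{without} any a~priori restriction such as $L_n\le\frac12 R_n$, since here $n$ ranges over \emph{all} steps, not just those before $\Theta$. This requires a variant of Lemma~\ref{lemma_empty_ball}: one needs to exhibit, for arbitrary admissible history sets and arbitrary $R_n\ge t$, a ball of radius $\gtrsim(t-L_n)$ inside $B^0(\pi_n,t)\setminus H_n$. When $t\le R_n$ this follows by the same radial-lens geometry as before (the construction of the point $x_\ell$), and when $t>R_n$ one has $\|\Psi(\pi_n)-\pi_n\|<\|\pi_n\|=R_n<t$ deterministically so the probability is zero; the genuinely delicate case is intermediate ratios $\ell=L_n/R_n$ close to $1$, where $\alpha_\ell$ degenerates, but there the trivial bound $\P(\cdots)\le 1\le e^{L_n}$ (valid whenever $t\le 2L_n$, say, using $L_n\ge$ the relevant threshold) suffices, so one only needs the geometric estimate in the regime $t\ge 2L_n$ where $\ell$ is bounded away from the degenerate value after shrinking. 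Patching these cases together carefully, and tracking that the resulting constant $Z$ depends only on $d$, is the technical heart of the argument; everything downstream is the routine telescoping-and-Cauchy--Schwarz bookkeeping already exemplified in the preceding proofs.
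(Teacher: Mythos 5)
Your per-step estimates are fine in spirit (the uniform overshoot tail, i.e.\ the $h$-part of the proof of Proposition~\ref{prop_upper_control_proba}, holds with no restriction of the type $L_n\leq\frac12R_n$, so that worry is a non-issue), but the chaining step contains a genuine gap, and it sits exactly where the lemma is delicate. Your ``cleanest route'' needs the inductive hypothesis $\E[e^{\sum_{n=i}^{j-1}\|\pi_{n+1}-\pi_n\|+L_j}\mid\F_i]\leq e^{2L_i}Z_0^{\,j-i}$ to propagate, but conditioning on $\F_j$ and applying your one-step bound $\E[e^{\|\pi_{j+1}-\pi_j\|+L_{j+1}}\mid\F_j]\leq Z_0e^{2L_j}$ leaves you with the exponent $\sum_{n=i}^{j-1}\|\pi_{n+1}-\pi_n\|+2L_j$, while the hypothesis only controls $+L_j$: since $\|\pi_{n+1}-\pi_n\|$ can itself be of size $L_n$, dominating weight $c$ on $L_{n+1}$ forces weight $c+1$ on $L_n$, so the coefficient of $L$ grows by one at each step and the induction never closes. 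Carried out honestly, this route produces a factor $e^{(j-i)L_i}$ together with exponential moments of order growing like $j-i$ of the overshoots $r_n-r_{n+1}$, whose product is super-exponential in $j-i$ — nothing like $e^{L_i}Z^{j-i}$. A further warning sign: your intermediate unnormalized claim, if it were true, would give by Jensen $\E[e^{S/m}\mid\F_i]\leq e^{2L_i/m}Z_0$, far stronger than the lemma; the factor $\frac{1}{j-i}$ is not a cosmetic ``downgrade'' obtained at the end, it is the very thing that makes the statement accessible to these estimates.

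The missing idea is the paper's deterministic treatment of the accumulated history sizes \emph{before} any expectation is taken. From $r_n-r_{n+1}=(\|\pi_{n+1}-\pi_n\|-L_n)_+$ one gets $\|\pi_{n+1}-\pi_n\|\leq L_n+(r_n-r_{n+1})$, and then an Abel-type summation using $L_{n+1}-L_n\leq r_n-r_{n+1}$ yields $\sum_{n=i}^{j-1}L_n\leq(j-i)(L_i+r_i-r_j)$; dividing by $j-i$, the whole normalized sum is bounded by $L_i+2(r_i-r_j)$. Only then does one take conditional expectations, so that a single copy of $e^{L_i}$ is pulled out and the remaining factor $\E[e^{2(r_i-r_j)}\mid\F_i]\leq Z^{j-i}$ follows by iterating the uniform bound on $\E[e^{2(r_n-r_{n+1})}\mid\F_n]$, which only involves exponential moments of the overshoots at the fixed rate $2$. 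Your proposal replaces this global, pathwise bookkeeping by a step-by-step transfer of $L_{n+1}$ onto $L_n$, which is precisely what blows up; as written, the argument does not deliver the lemma.
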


\begin{proof}
    Let $j>i\geq0$. First, observe that for each $n\geq0$, $(\|\pi_{n+1}-\pi_n\|-L_n)_+=r_n-r_{n+1}$, so that
        \[\|\pi_{n+1}-\pi_n\|\leq L_n+r_n-r_{n+1}.\]
    Summing for $n$ between $i$ and $j-1$, it comes
        \begin{equation}
            \label{eq_Wij}
            \sum_{n=i}^{j-1}\|\pi_{n+1}-\pi_n\|\leq r_i-r_j+\sum_{n=i}^{j-1}L_n.
        \end{equation}
    Now observe that
        \begin{equation}
            \label{eq_sum_L}
            \begin{split}
                \sum_{n=i}^{j-1}L_n&=\sum_{n=i}^{j-1}\left(L_i+\sum_{k=i}^{n-1}(L_{k+1}-L_k)\right)\\&=(j-i)L_i+\sum_{n=i}^{j-1}(j-1-n)(L_{n+1}-L_n)\\
                &\leq(j-i)L_i+(j-i)\sum_{n=i}^{j-1}(r_n-r_{n+1})
                =(j-i)(L_i+r_i-r_j),
            \end{split}
        \end{equation}
    where the inequality uses that $L_{n+1}-L_n\leq r_n-r_{n+1}$ for each $n\geq0$ by definition. Then, injecting \eqref{eq_sum_L} in \eqref{eq_Wij}, it comes
        \[\frac{1}{j-i}\sum_{n=i}^{j-1}\|\pi_{n+1}-\pi_n\|\leq L_i+\left(1+\frac{1}{j-i}\right)(r_i-r_j)\leq L_i+2(r_i-r_j).\]
    Taking the exponential and the expectation, one obtains
        \begin{equation}
            \label{eq_expmoment_sum_dst}
            \E\left[\exp\left(\frac{1}{j-i}\sum_{n=i}^{j-1}\|\pi_{n+1}-\pi_n\|\right)~\middle|~\F_i\right]\leq e^{L_i}\E[e^{2(r_i-r_j)}~|~\F_i].
        \end{equation}
    Finally, observe that Proposition \ref{prop_upper_control_proba} implies that there exists $Z\in(1,\infty)$ such that
        \[\E[e^{2(r_n-r_{n+1})}~|~\F_n]\leq Z\]
    for all $n\geq0$, so that $\E[e^{2(r_i-r_j)}~|~\F_i]\leq Z^{j-i}$, which injected in \eqref{eq_expmoment_sum_dst} concludes the proof.
\end{proof}

We can finally prove the main result of the subsection.

\begin{proof}[Proof of Proposition \ref{prop_fluctuation_control}]
    Let $t, A\in\mathbb N^*$. First, observe that one can write
        \begin{equation}
            \label{eq_p_sumpi}
            \begin{split}
                &\P\left[\sup_{n\geq0}\sum_{k=w_n}^{w_{n+1}-1}\|\pi_{k+1}-\pi_k\|>At^2\right]\\
                &\qquad\leq\P\left[\sup_{n\geq 0}(w_{n+1}-w_n)>t\right]+\P\left[\exists 0\leq n<I_\Theta~\sum_{k=\tau_n}^{\tau_n+t-1}\|\pi_{k+1}-\pi_k\|>At^2\right]\\
                &\qquad\leq\|\pi_0\|\left(C_{\mathrm w} e^{-c_{\mathrm w} t}+\sup_{n\geq0}\P\left[\sum_{k=\tau_n}^{\tau_n+t-1}\|\pi_{k+1}-\pi_k\|>At^2\right]\right),
            \end{split}
        \end{equation}
    where the first inequality follows from a union bound together with the fact that for any $n\geq 0$, $w_n<\Theta$ implies $w_n=\tau_i$ for some $i<I_\Theta$, and the last line is obtained using Proposition \ref{prop_w_spacing} as well as $I_\Theta\leq\|\pi_0\|$ from Lemma \ref{lemma_i_theta}. Now, let $n\geq0$ and observe that using Markov's inequality, one can write
        \begin{equation*}
            \begin{split}
                \P\left[\sum_{k=\tau_n}^{\tau_n+t-1}\|\pi_{k+1}-\pi_k\|>At^2\right]&\leq e^{-At}\E\left[\exp\left(\frac{1}{t}\sum_{k=\tau_n}^{\tau_n+t-1}\|\pi_{k+1}-\pi_k\|\right)\right]\\
                &=e^{-At}\E\left[\1_{\tau_n<\Theta}\E\left[\exp\left(\frac{1}{t}\sum_{k=\tau_n}^{\tau_n+t-1}\|\pi_{k+1}-\pi_k\|\right)~\middle|~\F_{\tau_n}\right]+\1_{\tau_n=\Theta}\right]\\
                &\leq e^{-At}\E[\1_{\tau_n<\Theta}e^{L_{\tau_n}}Z^t+\1_{\tau_n=\Theta}]\leq e^\kappa Z^te^{-At},
            \end{split}
        \end{equation*}
    where $C$ is given by Lemma \ref{lemma_sum_pi_exp_mom} and the last inequality uses that $L_{\tau_n}<\kappa$ when $\tau_n<\Theta$ by definition. Injecting in \eqref{eq_p_sumpi}, it comes
        \[\P\left[\sup_{n\geq0}\sum_{k=w_n}^{w_{n+1}-1}\|\pi_{k+1}-\pi_k\|>At^2\right]\leq\|\pi_0\|\left[C_{\mathrm w} e^{-c_{\mathrm w} t}+e^\kappa (Ze^{-A})^t\right].\]
    Fixing $A$ big enough so that $Ce^{-A}<1$, one derives the result.
\end{proof}

To conclude the subsection, we focus on controlling the fluctuations of the exploration process beyond $\Theta$, which are not captured by the symmetrization decomposition, since $(w_n)_{n\geq0}$ eventually reaches $\Theta$ but then stays stationary. Because the path $(\pi_n)_{n\geq\Theta}$ stays within $B(0, R_\Theta)$, it suffices to suitably bound the tail of the random variable $R_\Theta$ uniformly over $\pi_0\in\R^d$. This is what we accomplish in the following proposition.

\begin{Proposition}
    \label{prop_r_theta}
    Recall that $R_\Theta=\|\pi_\Theta\|$ where $\Theta$ is defined in \eqref{def_theta}. Then, there exists $C_\Theta=C_\Theta(d)>0$ and $c_\Theta=c_\Theta(d)>0$ such that for all $t\geq1$,
        \[\P(R_\Theta>t)\leq C_\Theta e^{-c_\Theta t^{\frac{d}{d+1}}}.\]
\end{Proposition}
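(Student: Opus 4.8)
The plan is to bound $R_\Theta$ by controlling the total radial progress $R_0-R_\Theta$ from below in a way that is uniform over $\pi_0$. Recall $\Theta=\inf\{n:R_n<1+\kappa \text{ or } (L_n)^{d+1}>\lambda R_n\}$. The key observation is that as long as $n<\Theta$, Proposition~\ref{prop_lower_control} applies, giving $\E[e^{2(R_{n+1}-R_n)}\mid\F_n]\le a<1$; so the radial coordinate decreases at a uniformly positive exponential rate up to time $\Theta$. Hence if $\Theta$ is large, the path has travelled a long radial distance, and conversely $R_\Theta$ is small unless $\Theta$ itself is small \emph{and} the process happened to exit via the condition $(L_\Theta)^{d+1}>\lambda R_\Theta$ at a large radius --- which forces $L_\Theta$ to be large, an event that is exponentially unlikely because history-set widths have light tails (Proposition~\ref{prop_upper_control_proba}, or rather its consequence Lemma~\ref{lemma_sum_pi_exp_mom} controlling $r_i-r_j$).

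More precisely, I would argue as follows. Fix $t\ge 1$. On $\{R_\Theta>t\}$ we have $R_n>t$ for all $n\le\Theta$, in particular $R_n\ge 1+\kappa$ throughout, so the exit at time $\Theta$ must be through the event $(L_\Theta)^{d+1}>\lambda R_\Theta$; since $R_{\Theta}\le R_0=\|\pi_0\|$ this is vacuous when $\|\pi_0\|$ is too small, but for the uniform bound we cannot assume $\|\pi_0\|$ large, so instead we use: $R_0\ge R_\Theta>t$ forces $\|\pi_0\|>t$, and then on $\{R_\Theta>t\}$ we have $R_0-R_\Theta\ge 0$ with no contradiction --- so this alone is not enough and we need the exponential decay of $R_n$. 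The clean route is a first-step / telescoping argument: for any integer $m\ge 0$,
\[
\P(R_\Theta>t)\le \P(\Theta\le m,\ R_\Theta>t)+\P(\Theta>m).
\]
On $\{\Theta>m\}$, Proposition~\ref{prop_lower_control} and a telescoping product (exactly as in the proof of Lemma~\ref{lemma_T_n}) give $\P(\Theta>m)\le \E\big[\prod_{i=0}^{m-1}\1_{i<\Theta}e^{2(R_{i+1}-R_i)}\big]\le a^m$. For the other term, on $\{\Theta\le m\}$ the width condition $(L_\Theta)^{d+1}>\lambda R_\Theta>\lambda t$ forces $L_\Theta>(\lambda t)^{1/(d+1)}$; since $L_n\le L_0+\sum_{k=0}^{n-1}(L_{k+1}-L_k)\le \sum_{k=0}^{n-1}(r_k-r_{k+1})=r_0-r_n$ (using $L_0=0$ and $L_{k+1}-L_k\le r_k-r_{k+1}$), we get $\{\Theta\le m,\ R_\Theta>t\}\subset\{r_0-r_m>(\lambda t)^{1/(d+1)}\}$ (the sequence $r_0-r_n$ is nondecreasing, so the value at $\Theta\le m$ is dominated by the value at $m$). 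The exponential moment bound $\E[e^{2(r_0-r_m)}]\le Z^m$ from Lemma~\ref{lemma_sum_pi_exp_mom} (with $i=0$, $L_0=0$) then yields, by Markov,
\[
\P(\Theta\le m,\ R_\Theta>t)\le Z^m\exp\!\big(-2(\lambda t)^{1/(d+1)}\big).
\]
Combining, $\P(R_\Theta>t)\le a^m+Z^m e^{-2(\lambda t)^{1/(d+1)}}$ for every $m$; optimizing by taking $m=\lceil c\,t^{1/(d+1)}\rceil$ for a small constant $c=c(d)>0$ chosen so that $Z^m e^{-2(\lambda t)^{1/(d+1)}}\le e^{-c' t^{1/(d+1)}}$, and noting $a^m\le e^{-c'' t^{1/(d+1)}}$, gives the claimed bound $\P(R_\Theta>t)\le C_\Theta e^{-c_\Theta t^{d/(d+1)}}$.

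Wait --- the target exponent is $t^{d/(d+1)}$, not $t^{1/(d+1)}$, so the crude bound above is too weak and the argument must be sharpened. The fix is that $L_\Theta$ is not merely bounded by $r_0-r_\Theta$ but the \emph{radial distance travelled} $R_0-R_\Theta$ also enters: on $\{R_\Theta>t\}$ with $\Theta\le m$ we have both $R_0\ge R_\Theta > t$ and $(L_\Theta)^{d+1}>\lambda t$. The point is to balance the cost $\lambda t$ of building a wide history against the number of steps $m$ available: building a history of radial width $L$ requires at least of order $L$ steps whose increments $r_k-r_{k+1}$ sum to $\ge L$, but it also requires the Poisson process to leave the lens $B^0(\pi_k,L_k+h)\setminus H_k$ empty, which by Proposition~\ref{prop_upper_control_proba} costs $\exp(-c_{\mathrm r}\max\{h,L_k\}^d)$. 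Summing these costs, achieving $L_\Theta\approx (\lambda t)^{1/(d+1)}$ over the course of the exploration has probability at most $\exp(-c\,t^{d/(d+1)})$ because the $d$-th power of the width enters the exponent: $((\lambda t)^{1/(d+1)})^d=(\lambda t)^{d/(d+1)}$. So the correct second term should be estimated by a more careful union over steps using Proposition~\ref{prop_upper_control_proba} directly rather than the crude $Z^m$ exponential-moment bound: for each $k<\Theta$, $\P(r_k-r_{k+1}>h\mid\F_k)\le e^{-c_{\mathrm r}\max\{h,L_k\}^d}$, and building cumulative width $\ge s$ in $m$ steps has probability $\le \binom{m}{\cdot}$-type bound times $e^{-c_{\mathrm r}(s/m)^d m}$, optimized at $m\asymp s$ giving $e^{-c s^d}$ with $s=(\lambda t)^{1/(d+1)}$, i.e. $e^{-c'(\lambda t)^{d/(d+1)}}$. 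Balancing against $a^m$ with $m\asymp t^{1/(d+1)}$ then gives $a^m\le e^{-c''t^{1/(d+1)}}$ which is weaker than $e^{-c t^{d/(d+1)}}$ only when $d\ge 2$... so in fact both terms must be re-optimized jointly: choose $m\asymp t^{d/(d+1)}$, making $a^m\le e^{-c t^{d/(d+1)}}$, and then on $\{\Theta>m\}$ we also need $R_0-R_\Theta\le\|\pi_0\|$ which for small $\pi_0$ just says $\Theta$ can't be too large --- but the bound $\P(\Theta>m)\le a^m$ is valid regardless of $\|\pi_0\|$, so this is fine. With $m=\lceil c_0 t^{d/(d+1)}\rceil$ the term $Z^m e^{-2(\lambda t)^{1/(d+1)}}$ blows up, which is why one cannot use Lemma~\ref{lemma_sum_pi_exp_mom} here and must instead use the sharper per-step estimate of Proposition~\ref{prop_upper_control_proba} with the $\max\{h,L_k\}^d$ in the exponent, tracking that the \emph{last} step before $\Theta$ must have $L_\Theta>(\lambda t)^{1/(d+1)}$ so that step alone costs $e^{-c_{\mathrm r}(\lambda t)^{d/(d+1)}}$. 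Thus:
\[
\P(R_\Theta>t)\le \P(\Theta>m)+\sum_{k=0}^{m-1}\P\big(k+1=\Theta,\ L_{k+1}>(\lambda t)^{1/(d+1)}\big)\le a^m+m\,e^{-c_{\mathrm r}(\lambda t)^{d/(d+1)}},
\]
and with $m=\lceil t^{d/(d+1)}\rceil$ both terms are $\le C_\Theta e^{-c_\Theta t^{d/(d+1)}}$.

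The main obstacle, as the above self-correction reveals, is getting the \emph{right} exponent $d/(d+1)$ rather than a weaker power: the naive telescoping via Lemma~\ref{lemma_sum_pi_exp_mom} only controls the cumulative width $r_0-r_n$ through an $O(m)$ exponential-moment bound and loses the crucial $d$-th power coming from the volume of a $d$-dimensional ball in Proposition~\ref{prop_upper_control_proba}. The resolution is to observe that the exit at time $\Theta$ (when $R_\Theta>t$) happens precisely because $L_\Theta$ became large, and a large $L_\Theta$ at radius $>t$ forces $L_\Theta\gtrsim t^{1/(d+1)}$, at which point the single exploration step creating that width is exponentially costly with rate $(t^{1/(d+1)})^d=t^{d/(d+1)}$; one then unions over the $O(t^{d/(d+1)})$ possible indices $\le\Theta$ (paying at most $\P(\Theta>t^{d/(d+1)})\le a^{t^{d/(d+1)}}$ for the event that $\Theta$ exceeds this many steps). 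Carefully separating "$\Theta$ is large" (controlled by Proposition~\ref{prop_lower_control}'s contractivity) from "$\Theta$ is small but $R_\Theta$ is large" (controlled by the light tail of a single large history increment at large radius via Proposition~\ref{prop_upper_control_proba}) is the heart of the argument, and making the union bound over steps quantitatively compatible with the $d/(d+1)$ exponent is the delicate point.
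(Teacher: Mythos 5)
Your second, ``corrected'' version still rests on the claim $\P(\Theta>m)\le a^m$, and this is the fatal gap: it is false, and so is the derivation you give for it. First, the inequality $\1_{\Theta>m}\le\prod_{i=0}^{m-1}\1_{i<\Theta}e^{2(R_{i+1}-R_i)}$ goes the wrong way, since $R_{i+1}\le R_i$ makes each factor $e^{2(R_{i+1}-R_i)}\le 1$; in Lemma~\ref{lemma_T_n} the telescoping only works because the indicator $\1_{R_n-R_{n+k}<\kappa+1}$ supplies the compensating factor $e^{2(\kappa+1)}e^{2(R_{n+k}-R_n)}$. Second, and more fundamentally, Proposition~\ref{prop_lower_control} controls the time needed to make a \emph{fixed} amount of radial progress, not the time until $\Theta$: since the process must descend from radius $\|\pi_0\|$ to radius $1+\kappa$ (or build a wide history) before $\Theta$, one typically has $\Theta$ of order $\|\pi_0\|$, so $\P(\Theta>m)$ is close to $1$ whenever $m\ll\|\pi_0\|$. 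With $t=\|\pi_0\|^{1/2+\varepsilon}$ (the regime actually used in the proof of Theorem~\ref{thm_deviation_control}) your truncation at $m\asymp t^{d/(d+1)}$ therefore discards almost nothing. Even if you drop the truncation and union the per-step cost over all $k<\Theta$, you pick up a prefactor of order $\E[\Theta]\asymp\|\pi_0\|$, which is incompatible with the statement's requirement that $C_\Theta, c_\Theta$ depend only on $d$. (Your per-step estimate $\P(k+1=\Theta,\ L_{k+1}>(\lambda t)^{1/(d+1)}\mid\F_k)\le e^{-c(\lambda t)^{d/(d+1)}}$ is also only asserted, not derived: Proposition~\ref{prop_upper_control_proba} penalizes $\max\{h,L_k\}$, the width \emph{before} the step, and a large $L_{k+1}$ can be inherited from an already-large $L_k$ rather than created by a large increment $r_k-r_{k+1}$; this part is probably repairable by a case split on the size of $L_k$, but it is not the main issue.)

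The paper's proof avoids counting steps entirely and is uniform in $\pi_0$ by design. On $\{\Theta>1,\ R_\Theta>t\}$ the exit must occur through $(L_\Theta)^{d+1}>\lambda R_\Theta$, and the infimum defining $r_\Theta$ is attained by the lens of some Poisson point $c$ with $\|c\|\ge R_\Theta>t$ and $\|c\|-\|\Psi(c)-c\|=r_\Theta=R_\Theta-L_\Theta$; a short computation using the concavity of $x\mapsto(\lambda x)^{1/(d+1)}$ shows that this forces $\|\Psi(c)-c\|>(\lambda\|c\|)^{1/(d+1)}-C_0$. The Mecke equation then bounds the probability that \emph{any} Poisson point outside $B(0,t)$ has such an anomalously long outgoing edge by $\int_{\R^d\setminus B(0,t)}e^{-C\|x\|^{d/(d+1)}}\,\mathrm dx$ via Lemma~\ref{lemma_psi_tail}, which gives the claimed $e^{-c_\Theta t^{d/(d+1)}}$. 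Your intuition that the exponent $d/(d+1)$ must come from raising a width of order $t^{1/(d+1)}$ to the $d$-th power is correct, but the union bound that makes it work is spatial (over Poisson points, via Mecke), not temporal (over exploration steps).
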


\begin{proof}
    Assume $\Theta>1$. Then, by definition of $L_\Theta$, there exist $c\in\N$ with $B^0(c, \|\Psi(c)-c\|)\subset H_\Theta$, $\|c\|\geq R_\Theta$ and
        \begin{equation}
            \label{eq_psi_c_LT}
            R_\Theta-L_\Theta=r_\Theta=\|c\|-\|\Psi(c)-c\|.
        \end{equation}
    Now let $t\geq 1+\kappa$ and assume $R_\Theta>t$. By definition of $\Theta$, since $R_\Theta>t\geq1+\kappa$, one must have
        \[(L_\Theta)^{d+1}>\lambda R_\Theta.\]
    Injecting in \ref{eq_psi_c_LT}, it implies that $c$ satisfies
        \begin{equation}
            \label{eq_psi_c}
            \|\Psi(c)-c\|>\|c\|-R_\Theta+g(R_\Theta)\quad\text{where}\quad g:x\mapsto(\lambda x)^{\frac{1}{d+1}}.
        \end{equation}
    Since $g(x)=o_{x\to\infty}(x)$, $\|c\|\geq R_\Theta$ and $g$ is concave, one can write
        \begin{equation*}
            \begin{split}
                g(\|c\|)-g(R_\Theta)&\leq g(\|c\|-R_\Theta)-g(0)\\&=g(\|c\|-R_\Theta)\\
                &\leq \|c\|-R_\Theta+C_0\quad\text{where}\quad C_0\coloneqq\sup\{g(x)-x:x\geq 0\}\in\R_+.
            \end{split}
        \end{equation*}
    Injecting in \eqref{eq_psi_c} gives
        \begin{equation}
            \label{eq_c_}
            \begin{split}
                \|\Psi(c)-c\|&>\|c\|-R_\Theta+g(R_\Theta)\\
                &\geq g(\|c\|)-g(R_\Theta)-C_0+g(R_\Theta)
                =(\lambda\|c\|)^{\frac{1}{d+1}}-C_0.
            \end{split}
        \end{equation}
    Then, using a union bound and the Mecke equation, one deduces that
        \begin{equation*}
            \begin{split}
                \P(\Theta>1,~R_\Theta>t)&\leq\P\left[\exists c\in\N\setminus B(0,t)~\|\Psi(c)-c\|>(a_0\|c\|)^{\frac{1}{d+1}}-C_0\right]\\
                &\leq\E\left[\sum_{c\in\N\setminus B(0, t)}\1_{\|\Psi(c)-c\|>(a_0\|c\|)^{\frac{1}{d+1}}-C_0}\right]\\
                &=\int_{\R^d\setminus B(0,t)}\P\left[\|\Psi(x)-x\|>(a_0\|x\|)^{\frac{1}{d+1}}-C_0\right]\mathrm dx.
            \end{split}
        \end{equation*}
    Applying Lemma \ref{lemma_psi_tail} in the display above, one can find $C_1,C_2\in(0,\infty)$ such that for all $t\geq 1$,
        \[\P(\Theta>1,~R_T>t)\leq C_1\int_{\R^d\setminus B(0,t)}e^{-C_2\|x\|^{\frac{d}{d+1}}}\mathrm dx=dC_1|B(0,1)|\int_t^\infty r^{d-1}e^{-C_2r^{\frac{d}{d+1}}}\mathrm dr,\]
    where the last equality is obtained via a polar change of variables. Finally, as one has
        \[\P(\Theta=1,~R_\Theta>t)=\P(\|\Psi(\pi_0)-\pi_0\|>t)\leq e^{-(t/2)^d|B(0,1)|}\]
    from Lemma \ref{lemma_psi_tail}, the result follows.
\end{proof}

\section{Proof of Theorem \ref{thm_deviation_control}}

\label{section_conclusion}

We now finally prove Theorem~\ref{thm_deviation_control} using the symmetrization decomposition.

\begin{proof}[Proof of Theorem \ref{thm_deviation_control}]
    For each $n\geq0$, denote by
        \[\Delta_{n+1}\coloneqq \pi_{w_{n+1}}-\pi_{w_n}\]
    the increment of the trajectory between the renewal steps corresponding to $w_n$ and $w_{n+1}$. Let $A>0$. Set
        \[G_A\coloneqq\{\forall n\geq0~\|\Delta_{n+1}\|\leq A\}.\]
    Let $U=\{u_s\}_{1\leq s\leq d-1}$ denote an orthonormal basis of $(\pi_0\R)^\perp$ and $U^\pm\coloneq U\cup -U$. For all $x\in\R^d$, one has
        \[\|\mathbf p_{\perp\pi_0}(x)\|^2=\sum_{u\in U^\pm}\max\{0, x\cdot u\}^2,\]
    hence since $|U^\pm|=2(d-1)$, there exists $u\in U^\pm$ such that $x\cdot u\geq\frac{\|\mathbf p_{\perp\pi_0}(x)\|}{\sqrt{2(d-1)}}$. For all $j>i\geq0$, set
        \[B_u^{i, j}\coloneqq\{\pi_{w_i}\cdot u\leq 0\}\cap\{\forall i<k\leq j,~\pi_{w_k}\cdot u\geq 0\}\]
    Let $t\geq A$. One can then write
        \begin{equation}
            \label{eq_proba_decomposition}
            \begin{split}
                \P\left[G_A,~\sup_{n\geq0}\|\mathbf p_{\perp \pi_0}(\pi_{w_n})\|>t\sqrt{2(d-1)}\right]&\leq\P[G_A,~\exists u\in U^\pm~\exists 1\leq j\leq I_\Theta~\pi_{w_j}\cdot u>t]\\
                &\leq\sum_{\substack{0\leq i<j\leq \lfloor\|\pi_0\|\rfloor\\u\in U^\pm}}\P[G_A,~B_u^{i, j},~\pi_{w_j}\cdot u>t],
            \end{split}
        \end{equation}
    where one used that $I_\Theta\leq\|\pi_0\|$ from Lemma \ref{lemma_i_theta}. Fix $u\in U^\pm$ and $0\leq i<j\leq\lfloor\|\pi_0\|\rfloor$. Let $i<k<j$. Since
        \[\|\pi_{w_k}\|^2\geq\|\pi_{w_{k+1}}\|^2=\|\pi_{w_k}\|^2+\|\Delta_{k+1}\|^2+2\Delta_{k+1}\cdot \pi_{w_k},\]
    one must have $\Delta_{k+1}\cdot \pi_{w_k}\leq 0$. This implies that whenever $\pi_{w_k}\cdot u\geq 0$, one has
        \begin{equation}
            \label{eq_delta_projection}
            \Delta_{k+1}\cdot u=\mathbf p_{\perp \pi_{w_k}}(\Delta_{k+1})\cdot u+\frac{1}{\|\pi_{w_k}\|^2}(\Delta_{k+1}\cdot\pi_{w_k})(\pi_{w_k}\cdot u)\leq\mathbf p_{\perp \pi_{w_k}}(\Delta_{k+1})\cdot u.
        \end{equation}
    Now denote
        \[D_{k+1}^{u,A}\coloneqq\1_{\|\mathbf p_{\perp\pi_{w_k}}(\Delta_{k+1})\|\leq A}\times\mathbf p_{\perp\pi_{w_k}}(\Delta_{k+1})\cdot u\]
    for all $k\geq0$. On the event $G_A\cap B_u^{i, j}$, one has $\pi_{w_i}\cdot u\leq 0$, $\Delta_{i+1}\cdot u\leq\|\Delta_{i+1}\|\leq A$ and $\Delta_{k+1}\cdot u=D_{k+1}^{u, A}$ for each $i+1\leq k\leq j-1$, therefore
    %since $\pi_{w_j}=\pi_{w_i}+\Delta_{i+1}+\sum_{k=i+1}^{j-1}\Delta_{k+1}$, one deduces that on $G_A\cap B_u^{i, j}$,
        \begin{equation*}
                \pi_{w_j}\cdot u=\pi_{w_i}\cdot u+\Delta_{i+1}\cdot u+\sum_{k=i+1}^{j-1}\Delta_{k+1}\cdot u
                \leq A+\sum_{k=i+1}^{j-1}D_{k+1}^{u, A}
        \end{equation*}
    This implies that
        \begin{equation}
            \label{eq_heoffding_setup}
            \P[G_A,~B_u^{i, j},~\pi_{w_j}\cdot u>t]\leq\P\left[\sum_{k=i+1}^{j-1}D_{k+1}^{u, A}>t-A\right].
        \end{equation}
    By construction, for each $k\geq 0$, the random variable $D_{k+1}^{u, A}$ is $\G_{k+1}$-measurable, bounded by $A$ and using Proposition \ref{prop_symmetrization_decomposition}, 
        \[\E[D_{k+1}^{u, A}~|~\G_k]=0.\]
    Therefore, as $j-i-1\leq \|\pi_0\|$, one can apply Hoeffding's concentration inequality in \eqref{eq_heoffding_setup} to get that
        \[\P[G_A,~B_u^{i, j},~\pi_{w_j}\cdot u>t]\leq\exp\left[-\frac{(t-A)^2}{2\|\pi_0\|A^2}\right].\]
    Injecting in \eqref{eq_proba_decomposition} it comes
        \begin{equation}
            \label{eq_p_ga_sup}
            \P\left[G_A,~\sup_{n\geq0}\|\mathbf p_{\perp \pi_0}(\pi_{w_n})\|>t\sqrt{2(d-1)}\right]\leq (d-1)\|\pi_0\|(\|\pi_0\|+1)\exp\left[-\frac{(t-A)^2}{2\|\pi_0\|A^2}\right].
        \end{equation}
    Then, using a union bound, one obtains
        \begin{equation}
            \label{eq_before_theta}
            \begin{split}
                &\P\left[\sup_{0\leq n\leq \Theta}\|\mathbf p_{\perp\pi_0}(\pi_n)\|>t\sqrt{2(d-1)}+A\right]\\
                &\qquad\leq\P\left[\sup_{n\geq0}\sum_{k=w_n}^{w_{n+1}-1}\|\pi_{k+1}-\pi_k\|>A\right]+\P\left[G_A,~\sup_{n\geq0}\|\mathbf p_{\perp \pi_0}(\pi_{w_n})\|>t\sqrt{2(d-1)}\right]\\
                &\qquad\leq C_\Sigma\|\pi_0\|\exp\left[-c_\Sigma\sqrt A\right]+(d-1)\|\pi_0\|(\|\pi_0\|+1)\exp\left[-\frac{(t-A)^2}{2\|\pi_0\|A^2}\right],
            \end{split}
        \end{equation}
    Where the last line follows from Proposition \ref{prop_fluctuation_control} and \eqref{eq_p_ga_sup}. Finally, observing that 
        \[\sup_{n\geq\Theta}\|\mathbf p_{\perp \pi_0}(\pi_n)\|\leq\sup_{n\geq\Theta}\|\pi_n\|=R_\Theta,\]
    one derives
        \begin{equation}
            \label{eq_dev_final}
            \begin{split}
                &\P\left[\sup_{n\geq0}\|\mathbf p_{\perp \pi_0}(\pi_n)\|>t\sqrt{2(d-1)}+A\right]\\
                &\qquad\leq\P[R_\Theta>t]+\P\left[\sup_{0\leq n\leq \Theta}\|\mathbf p_{\perp\pi_0}(\pi_n)\|>t\sqrt{2(d-1)}+A\right]\\
                &\qquad\leq C_\Theta\exp\left[-c_\Theta t^{\frac{d}{d+1}}\right]+C_\Sigma\|\pi_0\|\exp\left[-c_\Sigma\sqrt A\right]+(d-1)\|\pi_0\|(\|\pi_0\|+1)\exp\left[-\frac{(t-A)^2}{2\|\pi_0\|A^2}\right],
            \end{split}
        \end{equation}
    where the last inequality follows from \eqref{eq_before_theta} and Proposition \ref{prop_r_theta}. Finally, considering $\|\pi_0\|\geq 1$, fixing $\varepsilon\in(0,\frac{1}{2})$ and setting 
        \[t\coloneqq\|\pi_0\|^{\frac{1}{2}+\varepsilon}\quad\text{as well as}\quad A\coloneqq t^\delta
        \quad\text{with}\quad\delta\coloneqq\frac{4}{5}\frac{2\varepsilon}{1+2\varepsilon},\]
    one obtains
        \[t^{\frac{d}{d+1}}=\|\pi_0\|^{\frac{d(1+2\varepsilon)}{2(d+1)}},\quad t\sqrt{2(d-1)}+A=\mathcal O_{\pi_0\to\infty}\left(\|\pi_0\|^{\frac{1}{2}+\varepsilon}\right),\]
    and
        \[\sqrt A\sim_{\pi_0\to\infty}\|\pi_0\|^{\frac{\delta}{4}(1+2\varepsilon)}=\|\pi_0\|^{\frac{2}{5}\varepsilon}=\|\pi_0\|^{2\varepsilon-\delta(1+2\varepsilon)}\sim_{\pi_0\to\infty}\frac{(t-A)^2}{\|\pi_0\|A^2},\]
    which injected in \eqref{eq_dev_final} implies that one can find constants $c_\varepsilon, C_\varepsilon>0$ such that for all $\pi_0\in\R^d$,
        \[\P\left[\sup_{n\geq0}\|\mathbf p_{\perp \pi_0}(\pi_n)\|>\|\pi_0\|^{\frac{1}{2}+\varepsilon}\right]\leq C_\varepsilon\exp\left[-c_\varepsilon\|\pi_0\|^{\frac{2}{5}\varepsilon}\right],\]
    where one used that $\frac{d(1+2\varepsilon)}{2(d+1)}\geq\frac{1}{2}+\varepsilon\geq\frac{2}{5}\varepsilon$ as $d\geq1$. This concludes the proof.
\end{proof}

\bibliographystyle{amsalpha}
\bibliography{bibliography}

\end{document}